\documentclass[11pt]{amsart}

\usepackage{amssymb,amsmath,amsthm,amsfonts,amsopn,url,xcolor,hyperref,enumerate,mathtools,microtype,MnSymbol,comment,amsrefs,enumitem}
\usepackage[normalem]{ulem}
\usepackage{appendix}
\usepackage[all]{xy}
\input xy
\xyoption{all}
\usepackage{amscd}
\usepackage{soul}
\usepackage[mathscr]{euscript}

\usepackage{chngcntr}

\theoremstyle{plain}
\newtheorem{thm}{Theorem}[section]

\newtheorem{prop}[thm]{Proposition}
\newtheorem{lemma}[thm]{Lemma}
\newtheorem{cor}[thm]{Corollary}

\theoremstyle{definition}
\newtheorem{defn}[thm]{Definition}
\newtheorem*{defn*}{Definition}
\newtheorem*{question*}{Question}

\newtheorem{example}[thm]{Example}
\newtheorem*{example*}{Example}
\newtheorem{rem}[thm]{Remark}
\newtheorem*{rem*}{Remark}

\newtheorem{notation}[thm]{Notation}
\numberwithin{equation}{thm}

\counterwithin{figure}{subsection}

\newcommand{\field}[1]{\mathbb{#1}}

\newcommand{\Z}{\field{Z}}

\newcommand{\R}{\field{R}}
\newcommand{\F}{\field{F}}

\newcommand{\CC}{\mathcal{C}}

\newcommand{\func}[1]{\mathrm{#1} \,}
\newcommand{\rank}{\func{rank}}

\DeclareMathOperator{\pdim}{pd}

\newcommand{\be}{\begin{enumerate}}
\newcommand{\ee}{\end{enumerate}}

\newcommand{\cP}{\mathcal{P}}
\newcommand{\cX}{\mathcal{X}}
\newcommand{\cY}{\mathcal{Y}}
\renewcommand{\phi}{\varphi}

\newcommand{\fp}{\mathfrak{p}}
\newcommand{\fq}{\mathfrak{q}}
\newcommand{\fX}{\mathfrak{X}}
\newcommand{\fY}{\mathfrak{Y}}
\newcommand{\fN}{\mathfrak{N}}

\newcommand{\fg}{finitely generated}

\newcommand{\Tor}[4][i]{\operatorname{Tor}_{#1}^{#2}\!\left({#3},{#4}\right)}

\newcommand{\betti}[3][S]{\beta_{#2}^{#1}\!\left(#3\right)}

\newcommand{\reg}[1]{\operatorname{reg}\left({#1}\right)}
\newcommand{\pci}{polarized neural ideal }
\newcommand{\Ann}[2]{\operatorname{Ann}_{#1}\!\left({#2}\right)}
\newcommand{\Supp}[2][]{\operatorname{Supp}_{#1}\!\left(#2\right)}

\newcommand{\crd}{\color{teal}} 
\newcommand{\cfg}{\color{orange}}
\allowdisplaybreaks

\author{Hugh Geller}
\address{Center for Naval Analyses, Arlington, Virginia 22201 U.S.A.}
\email{\href{mailto:geller.hugh@gmail.com}{geller.hugh@gmail.com}}

\author{Rebecca R.G.}
\address{Department of Mathematical Sciences \\ George Mason University \\ Fairfax, VA  22030}
\email{rrebhuhn@gmu.edu}

\author{Alexandra Seceleanu}
\address{Department of Mathematics, University of Nebraska-Lincoln, 203 Avery Hall, Lincoln, NE 68588}
\email{aseceleanu@unl.edu}

\author{Nora Youngs}
\address{Department of Mathematics, Colby College, Waterville, ME 04910 }
\email{nora.youngs@colby.edu }

\title{Betti numbers of inductively pierced codes}

\date{\today}
\begin{document}

\begin{abstract}
    Neural ideals were introduced by Curto, Itskov, et al as an algebraic tool to study neural codes. In this paper, we use the notion of polarization introduced by G\"{u}nt\"{u}rk\"{u}n, Jeffries, and Sun to compute Betti numbers of inductively pierced neural codes. We prove that quadratically generated neural ideals are inductively pierced if and only if they have regularity 2. Further, we demonstrate how the Betti numbers yield information on the number and type of piercings of the original code.
    This work shows the utility of algebraic invariants of the neural ideal in detecting geometric features of the associated receptive fields.
\end{abstract}

\maketitle
\setcounter{tocdepth}{1} 

\section{Introduction}

In this paper we compute algebraic invariants of a certain family of neural ideals and demonstrate that they correspond to geometric features of the original neural code.

Neural rings were first introduced in \cite{NRingBMB} as an algebraic tool to study \textit{receptive field codes} (RF codes). RF codes arise in the brain in many contexts. One particular example is place cells - certain cells in the hippocampus that fire consistently every time an animal is in the same location within an environment (the stimulus space)
\cite{okeefe}. The set of locations where a particular cell fires is called its place field - or more generally, its receptive field. A neural code consists of a binary codeword for each distinct intersection of receptive fields that appears in the stimulus space (see Figure~\ref{fig:Ex1} for an example).
Typically, we consider stimulus spaces as subsets of a subspace $X$ of $\mathbb{R}^d$; in the case of place cells, $d=2$ or $3$, but there are other types of neurons whose receptive fields naturally lie in different dimensions (see \cite{NRingBMB} and references therein). 

The neural ring (and the related neural ideal) provide a tool for applying algebraic methods to these receptive field codes. Roughly speaking, the neural ideal is the set of polynomials that vanish at every codeword of a given neural code, excepting those that vanish on every possible codeword. These polynomials can then be interpreted to give descriptions of relationships between the receptive fields (such as containment or disjointness). This information, in turn, can be used to answer questions about what sort of arrangements of receptive fields could have generated the code. 

Some such questions (e.g. could a particular code have arisen from an arrangement of convex, open regions?) are quite difficult to answer for neural codes in general \cite{NCConvex}. However, there are some types of codes that are quickly detectable and/or whose neural ideals have particularly nice generators.

One such collection is {\it inductively pierced} codes \cite{GrossObatakeYoungs}. As the name suggests, these are codes that can be constructed iteratively, by adding receptive fields to an existing arrangement following strict conditions on how each receptive field may interact with those already present.  These codes are easily detectable from their algebraic signature. They have a strong connection to chordal graphs, and bounds on the best dimension in which an arrangement using convex regions exists are known (and if the receptive fields are spheres, the dimension can be exactly determined)\cite{curry2022recognizing}.

One obstruction to computing algebraic invariants of neural ideals is that they are generated by \textit{pseudomonomials}, products $\Pi_{i \in \sigma} x_i \Pi_{i \in \tau} (1-x_i)$, which are neither graded nor local and as a result are not as well studied in commutative algebra. To deal with this issue, G\"{u}nt\"{u}rk\"{u}n, Jeffries, and Sun give a process for \textit{polarizing} a neural ideal, turning it into a squarefree monomial ideal in the extension ring $\F_2[x_1,\ldots,x_n,y_1,\ldots,y_n]$ \cite{neuralpolarization}. One can then compute invariants such as Betti numbers for this ideal instead, and they carry over to information on the original neural ideal.

The primary goal of this paper is to compute homological invariants of polarized neural ideals and describe how
they correspond to geometric features of  the associated arrangement of receptive fields. In particular, we will focus on the Betti numbers for the polarized neural ideals of inductively pierced codes. Broadly, we find that this offers an algebraic invariant that provides geometric features of the code and information about its piercing structure, while remaining efficiently computable. More specifically, homological notions such as regularity and Betti numbers can be used to determine whether a neural code is inductively pierced, and if it is, how many piercings of each order it has.  If we use a multigrading that tracks degrees in $x$'s and $y$'s separately, the multigraded Betti numbers can further distinguish how many firing fields each piercing is contained in.
This demonstrates that homological invariants have the potential to be extremely useful in classifying neural codes.

The Castelnuovo–Mumford regularity of a homogeneous ideal measures its homological complexity. In particular, among nondegenerate ideals (those containing no linear forms), ideals of regularity 2 are precisely those whose higher-order relations are generated by linear syzygies. Our main results show a neural ideal is quadratic with regularity 2 if and only if the corresponding neural code is inductively pierced (Theorem~\ref{thm:reg2indpierced}). We also compute the Betti numbers (Theorem~\ref{thm:xybettis} and Corollary~\ref{cor:bettinumbers}) and projective dimension (Corollary~\ref{cor:pdim}) of inductively pierced codes. Further, we prove the Betti numbers of an inductively pierced code uniquely determine how many $k$-piercings the code has for each $k$, implying that the numbers of $k$-piercings are an invariant of the code (Corollary~\ref{cor:pierceToBetti} and Theorem~\ref{thm:jkl}). Consequently, we prove that the elimination ordering on a chordal graph has a fixed number of each type of simplicial vertex (Theorem~\ref{thm:simplicialfixed}).

Since we expect readers of this paper to have varied mathematical backgrounds,
we have included appendices on some of the algebraic and combinatorial background material.

\section{Definitions}\label{sec:defn}

\subsection{Neural codes/rings}

Here, we introduce the basic algebraic objects of interest: neural codes and neural rings. Unless otherwise noted, the definitions and results in this section are from \cite{NRingBMB}.

We will generally assume we have a set of neurons $i$, each of which fires in a particular open set $U_i$ in some affine space $\R^d$.

\begin{notation}
    We represent the set of neurons as $[n] = \{1,...,n\}$; the power set of $[n]$ is denoted $2^{[n]}$.
    
    $\F_2$ will be the field on two elements.
\end{notation} 

\begin{defn} A {\it neural code} $\CC$ is any collection of subsets of the set $[n]$ of all neurons: $$\CC \subseteq 2^{[n]}.$$   Each element $\sigma$ of the code is called a {\it codeword} and may be supposed to represent a data point where the neurons in $\sigma$ were simultaneously active.  As a convention, the element $\emptyset$ is always included in $\CC$.
\end{defn}

\begin{defn}
 A collection of regions $\mathcal U 
 = \{U_1,...,U_n\}$ where $U_i\subset \R^d$ is a {\it realization} of the code $\CC\subset 2^{[n]}$ if $$\CC = \{\sigma \subseteq [n] \,|\, \bigcap_{i\in \sigma} U_i \backslash \bigcup_{j\notin \sigma} U_j \neq \emptyset\}.$$ 
\end{defn}

\begin{example}\label{ex:key} Consider the code $\CC = \{\emptyset, 1,2,3,4,12,14,23,24,35,124,235\}$ Note that here we use the shorthand of writing, e.g., 123 instead of $\{1,2,3\}$. Figure~\ref{fig:Ex1} shows one possible realization of this code. 

\begin{figure}[h]
\includegraphics[width=2.5in]{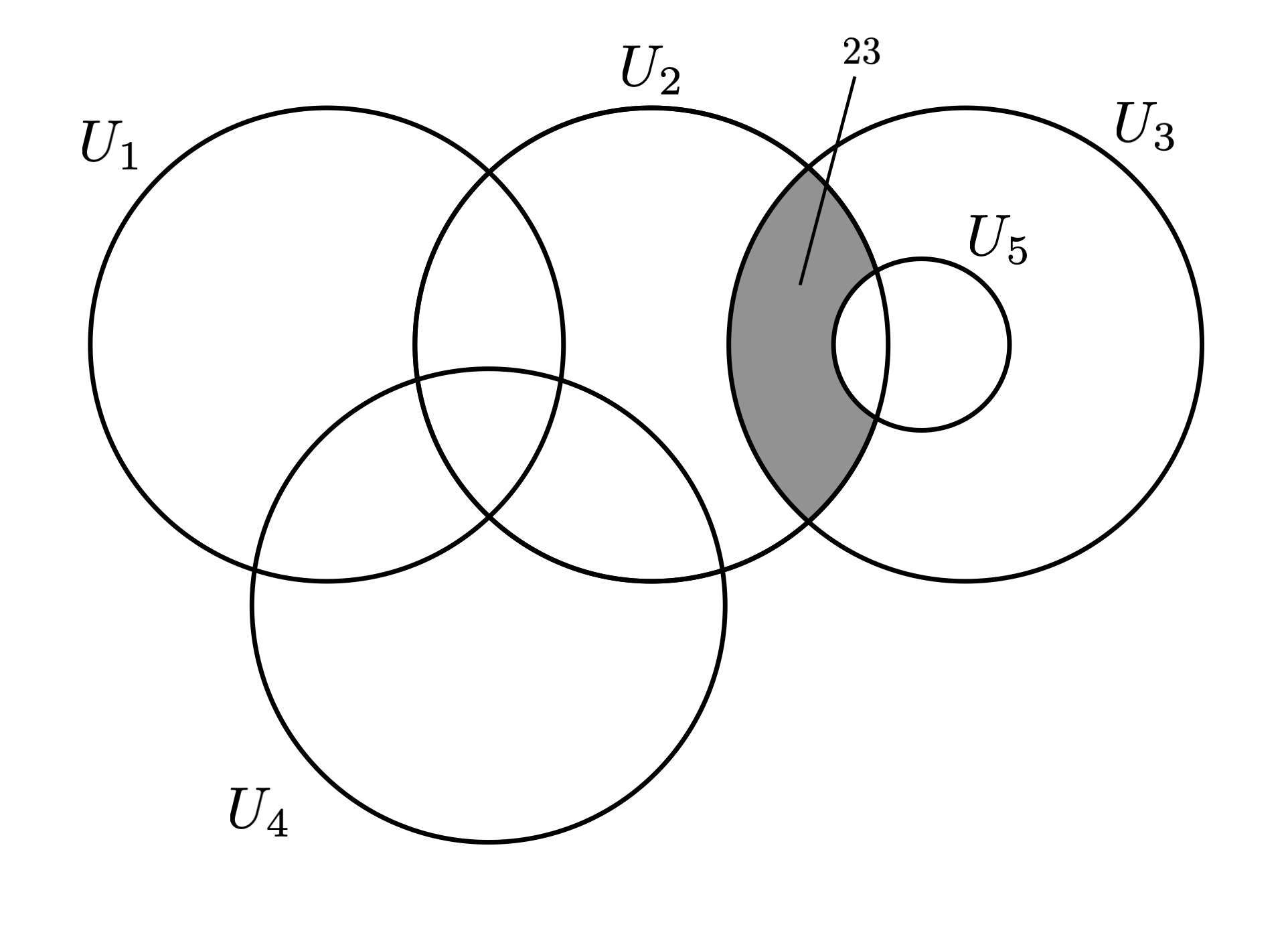}
\caption{A realization of the code $\CC$ from Example~\ref{ex:key}. The region corresponding to codeword $23$ is highlighted.}
\label{fig:Ex1}
\end{figure}
\end{example}

\begin{defn}\label{def:neuralring}
Given a neural code on $n$ neurons, we consider the associated ideal  $$I(\CC) = \{f \in R  \,|\, f(c) = 0\, \forall\, c\in \CC\}$$
of $R=\F_2[x_1,\ldots,x_n]$,
where $f$ is evaluated at $c$ by inputting a 1 for $x_i$ if $i \in c$ and a 0 for $x_i$ otherwise.

The {\it neural ring} is defined to be $$R_\CC = R/I(\CC).$$
\end{defn} 

The ideal $I(\CC)$ can be decomposed into two parts: $$I(\CC) = B+L$$ where $$B = \left( x_1(1-x_1),...,x_n(1-x_n) \right),$$ and $$L =\left ( \rho_v \,|\, v\in 2^{[n]}\backslash \CC \right )$$ where $\rho_v = \prod_{i\in v} x_i \prod_{j\notin v} (1-x_j)$. Note that $B$ is always the same, and reflects the use of the finite field $\F_2$, whereas the ideal $L$ is specific to the particular code $\CC$.  Thus, we will focus mostly on the properties of $L$, which we title the \textit{neural ideal}.\footnote{ In much of the literature, the notation for the neural ideal is $J$ instead of $L$. However, as we will rarely consider the neural ideal $L$ itself in this paper, we reserve $J$ for other uses (namely, its polarized version).} 

\begin{defn} \label{def:pseudomonomial}
A {\em pseudo-monomial} is a polynomial $f\in R$ of the form $f=\prod_{i\in \sigma} x_i \prod_{j\in \tau} (1-x_j)$, where $\sigma\cap\tau=\emptyset$. 
\end{defn} 

Observe that $L$ is generated by pseudo-monomials. In fact, these generators are, in a sense, maximal -- if we multiply them by any linear factor $x_i$ or $1-x_i$ not already part of the product, they would also be part of $B$, and thus redundant. The {\it minimal} pseudo-monomials of $L$ are more interesting, and provide another useful generating set for $L$.

\begin{defn}\label{def:cf} The {\em canonical form} of a code $\CC$, denoted $CF(\CC)$, is the set of all minimal pseudo-monomials of $L$, where a pseudo-monomial $f$ is said to be {\em minimal} if no other pseudo-monomial $g$ of $L$ divides $f$. If $L$ is the neural ideal of $\CC$, we may also refer to the canonical form of $\CC$ as $CF(L)$ without explicitly listing $\CC$.
\end{defn}

The canonical form generates the ideal $L$ (that is, $ L = ( CF(\CC) )$) and is often a friendlier collection of generators than the set $\rho_v$.

\begin{example}
Observe that the code $\CC$ from Example~\ref{ex:key} is a code on five neurons, with twelve codewords. If we use the generators $\rho_v$ for $L$ as in Definition~\ref{def:neuralring} then $L$ would have $2^5-12 = 20$ generators $\rho_v$, each of degree 5. For example, as $v=145$ is not in $\CC$, then $\rho_{145} = x_1x_4x_5(1-x_2)(1-x_3)$ would be one such generator.  

The canonical form (Definition~\ref{def:cf}), however, is much simpler. In this case, it contains only five pseudo-monomials, all of degree two: $$CF(\CC) = \{x_1x_3, \,x_1x_5, \, x_3x_4, \, x_4x_5, \, x_5(1-x_3)\}.$$ 
\end{example}

\begin{rem}
    Because the canonical form by definition contains {\it all} minimal pseudo-monomials, it may not be the {\it shortest} possible list of generators. For example, the code $\CC = \{\emptyset, 1, 12, 123\}$ has canonical form \[CF(\CC) = \{x_3(1-x_2), x_2(1-x_1), x_3(1-x_1)\}.\] But the ideal $L$ can also be generated by the shorter list $\{x_3(1-x_2), x_2(1-x_1)\}$, since $x_3(1-x_1)=(1-x_1)[x_3(1-x_2)]+x_3[x_2(1-x_1)]$.
\end{rem}

Neural ideals were originally studied using these pseudo-monomials; however, since they are neither standard graded nor all contained in any maximal ideal, many algebraic invariants do not apply. Instead, we convert neural ideals to monomial ideals using an operation called polarization. Monomial ideals are graded under the standard grading, or many multi-gradings, so we can compute homological invariants like Betti numbers for them.

\begin{notation}
Throughout this paper, except where otherwise specified, $R :=  \F_2[x_1,\ldots,x_n]$ will be the polynomial ring on $n$ variables, and \[S := \F_2[x_1,\ldots,x_n,y_1,\ldots, y_n]\] will be the polynomial ring on $2n$ variables. By default, both rings will be standard graded, except where a multi-grading is explicitly given. Neural ideals on $n$ neurons will live in $R$ and polarized neural ideals on $n$ neurons (which are graded) will live in $S$. All Betti numbers (see Appendix~\ref{sec:betti} for definitions) will be computed over $S$ unless otherwise noted. 
\end{notation}

\begin{defn}[{\cite{neuralpolarization}*{Section 3}}]
    We define a \textit{polarization} function $\cP$ (not a homomorphism) that sends pseudo-monomials in $R$ to squarefree monomials in $S$ via \[\prod_{i \in \sigma} x_i \prod_{i \in \tau} (1-x_i) \mapsto \prod_{i \in \sigma} x_i \prod_{i \in \tau} y_i.\]
    This does not extend to a ring homomorphism.

    If $I$ is an ideal of $R$, then  $\cP(I)$ is the ideal of $S$ generated by $\cP(f)$ for all pseudo-monomials $f \in I$.

    We also define a \textit{depolarization} map $d:S \to R$, which is the ring homomorphism sending $x_i \mapsto x_i, y_i \mapsto 1-x_i$. The map $d$ induces an isomorphism of $S$-modules $S/\left(\sum_{i=1}^n (x_i+y_i-1)\right) \cong R$.
\end{defn}

\begin{thm}[{\cite{neuralpolarization}*{Theorem 3.2}}]
\label{thm:canonpolarizes}
Let $\{g_1,\ldots,g_k\} \subseteq R$ be the canonical form of a neural ideal $L$. Then $\cP(L)=(\cP(g_1),\ldots,\cP(g_k))$.
\end{thm}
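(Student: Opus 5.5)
The plan is to prove the two inclusions separately. The inclusion $(\cP(g_1),\ldots,\cP(g_k)) \subseteq \cP(L)$ is immediate. Each $g_j$ is a pseudo-monomial in $L$, so $\cP(g_j)$ is one of the defining generators of $\cP(L)$.

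For the reverse inclusion it suffices to show that $\cP(f)$ lies in $(\cP(g_1),\ldots,\cP(g_k))$ for every pseudo-monomial $f \in L$, since these elements generate $\cP(L)$. I would first establish the key lemma that every pseudo-monomial $f\in L$ is divisible by some element of $CF(L)$. Since divisibility among pseudo-monomials is a partial order on the finite set of pseudo-monomials dividing $f$, the set of pseudo-monomials in $L$ dividing $f$ is nonempty (it contains $f$) and finite. It therefore has a minimal element $g$ with respect to divisibility. I then need to check that such a $g$ is minimal in the sense of Definition~\ref{def:cf}: if some pseudo-monomial $h\in L$ divides $g$, then $h$ divides $f$, so $h=g$ by the choice of $g$. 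Hence $g=g_j$ for some $j$.

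Next I would translate divisibility through $\cP$. Write $f=\prod_{i\in\sigma}x_i\prod_{i\in\tau}(1-x_i)$ and $g_j=\prod_{i\in\sigma'}x_i\prod_{i\in\tau'}(1-x_i)$. I claim that $g_j\mid f$ forces $\sigma'\subseteq\sigma$ and $\tau'\subseteq\tau$. This follows from unique factorization in $R$, because the linear forms $x_i$ and $1-x_i$ are pairwise non-associate irreducibles. Given the claim, $\cP(g_j)=\prod_{\sigma'}x_i\prod_{\tau'}y_i$ divides $\cP(f)$, so $\cP(f)\in(\cP(g_j))$.

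The main point needing care is the lemma that every pseudo-monomial of $L$ is divisible by a canonical-form element. Over $\F_2$ one must make sure that the notion of divisibility used in Definition~\ref{def:cf} really is divisibility of pseudo-monomials as polynomials. One must also confirm that this agrees with containment of the index sets $(\sigma,\tau)$. Since $\sigma\cap\tau=\emptyset$ and the factors are distinct irreducibles, the factorization is unique and this agreement holds. The rest of the argument is formal.
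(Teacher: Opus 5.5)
Your proof is correct: both inclusions are handled properly, the minimal-divisor argument does produce an element of $CF(L)$ (over $\F_2$ the only unit is $1$, so divisibility is antisymmetric), and unique factorization gives $\sigma'\subseteq\sigma$, $\tau'\subseteq\tau$. The paper proves nothing here and simply cites \cite{neuralpolarization}; your argument is the standard one (every pseudo-monomial of $L$ is a multiple of a canonical-form element, and $g\mid f$ implies $\cP(g)\mid\cP(f)$, which is one direction of \cite{neuralpolarization}*{Lemma 3.1} as invoked in the proof of Lemma~\ref{lem:polarizedmingens}), and it never uses $L=(CF(L))$, so it holds for any ideal of $R$.
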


\begin{defn}
    The \textit{polarized canonical form} of a neural ideal is the list $\{\cP(g_1),\ldots,\cP(g_k)\} \subseteq S$, where $\{g_1,\ldots,g_k\} \subseteq R$ is the canonical form of a neural ideal $L$. We will often refer to the polarized canonical form of a neural ideal without specifying $L$, but $L$ can be found by depolarizing the elements of the polarized canonical form.

    We will refer to the ideal $J$ of $S$ generated by the polarized canonical form of a neural ideal as the \textit{polarized neural ideal}. This is a squarefree monomial ideal, so it is graded.
\end{defn}

One of the primary goals of this paper is to classify neural ideals by their canonical resolutions.

\begin{defn}[{\cite{neuralpolarization}*{Definition 4.5}}]
    Let $L \subseteq R$ be a neural ideal with canonical form $CF(L)$, and $J \subseteq S$ its \pci. The \textit{canonical resolution} of $L$ is $G_\bullet=F_\bullet \otimes_S S/(x_1+y_1-1,\ldots,x_n+y_n-1)$, where $F_\bullet$ is a minimal free resolution of $J$ over $S$ (which exists because $S/J$ is graded).
\end{defn}

\begin{rem}
\label{rem:canonres}
    Since this construction involves taking a quotient by a regular sequence, $G_\bullet$ is a resolution of $R/L$ by Corollary~\ref{cor:freeresregseq}.
    Further, the canonical resolution $G_\bullet$ of $L$ has the same numbers of copies of $R$ in each homological degree as $F_\bullet$ has copies of $S$, and the maps $\partial_\bullet^G$ are given by replacing each $y_i$ from the corresponding map $\partial_\bullet^F$ by $1-x_i$, also by Corollary~\ref{cor:freeresregseq}.

    As a result, the Betti numbers of $J$ give us the shape of the canonical resolution of $L$. We will primarily discuss the resolution of $J$ over $S$, so as to be able to use terms like ``Betti numbers'', but we will still gain information on the canonical resolution.
\end{rem}

Consequently, we make the following definition:

\begin{defn}
    Given Remark~\ref{rem:canonres}, we define the \textit{Betti numbers of a neural code $\CC$} by setting $\beta_i(\CC)$ to be the number of copies of $R$ in the $i$th homological degree of the canonical resolution of $\CC$.
\end{defn}

\begin{lemma}
\label{lem:polarizedmingens}
    Let $J=(g_1,\ldots,g_k)$ be a \pci, i.e. \[\cP(CF(L))=\{g_1,\ldots,g_k\}\] for some neural ideal $L \subseteq R$. 
    Then $\{g_1,\ldots,g_k\}$ is a minimal generating set for $J$.
\end{lemma}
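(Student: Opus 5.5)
The plan is to reduce minimality to a divisibility statement among monomials. $J$ is a squarefree monomial ideal generated by the monomials $g_1,\ldots,g_k$, so a minimal monomial generating set of $J$ is the set of divisibility-minimal elements of $\{g_1,\ldots,g_k\}$. By the graded Nakayama lemma, the minimal number of generators is then the cardinality of that set. It therefore suffices to show that no $g_i$ divides $g_j$ for $i\neq j$. Equivalently, if $g_i \mid g_j$ then $g_i = g_j$, and in that case the preimages in $CF(L)$ coincide, since $\cP$ is injective on pseudo-monomials.

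First I check that $\cP$ is injective on pseudo-monomials and respects divisibility in both directions. A pseudo-monomial $f=\prod_{i\in\sigma}x_i\prod_{j\in\tau}(1-x_j)$ with $\sigma\cap\tau=\emptyset$ is encoded by the pair $(\sigma,\tau)$, and $\cP(f)=\prod_{i\in\sigma}x_i\prod_{j\in\tau}y_j$ recovers $(\sigma,\tau)$ exactly. For two pseudo-monomials $f\leftrightarrow(\sigma,\tau)$ and $f'\leftrightarrow(\sigma',\tau')$, the monomial $\cP(f)$ divides $\cP(f')$ exactly when $\sigma\subseteq\sigma'$ and $\tau\subseteq\tau'$. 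That containment makes $f$ a factor of $f'$ in $R$: the quotient is $\prod_{i\in\sigma'\setminus\sigma}x_i\prod_{j\in\tau'\setminus\tau}(1-x_j)$, which is again a pseudo-monomial because $\sigma'\cap\tau'=\emptyset$.

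Now suppose $g_i\mid g_j$ with $g_i=\cP(f_i)$, $g_j=\cP(f_j)$ and $f_i,f_j\in CF(L)$. By the previous step, $f_i$ divides $f_j$ as pseudo-monomials. Since $f_i$ is a pseudo-monomial in $L$ and $f_j$ is minimal (Definition~\ref{def:cf}), we must have $f_i=f_j$, and hence $g_i=g_j$. So the listed generators are pairwise incomparable under divisibility, and no $g_j$ lies in the ideal generated by the others. The reason is that a monomial lies in a monomial ideal only if it is divisible by one of that ideal's monomial generators.

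No step looks genuinely hard. The one point to handle carefully is the interpretation of "divides" in Definition~\ref{def:cf}. I will read it as "$g$ divides $f$ with $g\neq f$ and the quotient a pseudo-monomial", and the divisibility step above supplies exactly such a quotient. Theorem~\ref{thm:canonpolarizes} is not needed for minimality itself. It only confirms that these elements generate $\cP(L)$.
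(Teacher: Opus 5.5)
Your proposal is correct and takes essentially the same route as the paper. The paper reduces minimality to irredundance of the monomial generators (citing Herzog--Hibi, Proposition 1.3.5), then uses that no element of $CF(L)$ divides another and that polarization reflects divisibility; the only difference is that you check the correspondence $\cP(f)\mid\cP(f') \iff \sigma\subseteq\sigma',\ \tau\subseteq\tau'$ directly, where the paper cites Lemma 3.1 of G\"unt\"urk\"un--Jeffries--Sun.
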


\begin{proof}
By Proposition 1.3.5 of \cite{monomialideals}, this is a minimal generating set if and only if it is irredundant, i.e. no $g_i$ is divisible by any $g_j$ for $j \ne i$. But this is true as no element of $CF(L)$ divides any other, and by \cite{neuralpolarization}*{Lemma 3.1} polarization preserves this property. 
\end{proof}

\subsection{Inductively Pierced Codes}

The literature on neural codes is considerable and addresses many questions - which codes are convex? In what dimension? How can these codes be detected? (See \cites{NCConvex,NRingBMB} and related literature.) These questions typically do not have simple answers. However, many classes of codes have been identified that do have nice properties, some of which are easy to detect. One such class of codes is those that are {\it inductively pierced}.  Informally, a code is inductively pierced if it can be constructed iteratively, by adding new neurons to the code to interact with existing neurons only in restricted and predictable ways. To formally define these codes, we need to build up a bit of notation. The following definitions are from \cite{curry2022recognizing}, but note that we have rephrased any algebraic results to use the polarized form of the neural ideal.

\begin{defn} For any $\sigma\subseteq \tau\subseteq [n]$, the \emph{interval} between $\sigma$ and $\tau$ is the set 
\[
[\sigma,\tau] = \{\gamma\subseteq [n]\mid \sigma\subseteq \gamma\subseteq \tau\}.
\]
The \emph{rank} of an interval $[\sigma,\tau]$ is equal to $|\tau|- |\sigma|$. 
\end{defn}

\begin{defn} Given a code $\CC\subseteq 2^{[n]}$ and a neuron $i\in[n]$, we can construct the code $\CC\backslash i$, called the \emph{deletion} of $i$, 
which is obtained by deleting $i$ from every codeword in $\CC$ where it appears. For any $\sigma\subseteq [n]$, we let $\CC\backslash \sigma$ denote the code obtained by deleting all neurons $i\in\sigma$ from $\CC$. 
\end{defn}

\begin{defn}
\label{def:kpiercing}
Let $\CC\subseteq 2^{[n]}$ be a code and let $i\in[n]$ be a neuron.
We say that $i$ is an \emph{(abstract) $k$-piercing} of $\CC$ if there exist $\sigma\subseteq \tau\subseteq [n]\backslash\{i\}$ so that $[\sigma,\tau]$ has rank $k$ and the following two conditions are satisfied:
\begin{itemize}
    \item[(i)]$[\sigma,\tau]$ is contained in $\CC\backslash i$, and 
    \item[(ii)] $\CC = (\CC\backslash i)\cup [\sigma\cup\{i\}, \tau\cup\{i\}]$.
\end{itemize}
\end{defn}

More informally, to have a $k$-piercing, every element of this interval of rank $k$ must appear both with and without $i$, and these must be the only appearances of $i$. If we think of this from the perspective of a realization, the region $U_i$ is contained within $U_\sigma$, all of the neurons in $\tau\backslash\sigma$ appear in all combinations within $U_\sigma$, and all of the neurons in $(\tau\backslash\sigma) \cup \{i\}$ appear in all combinations within $U_\sigma$.

The above definition tells us how to identify if a given neuron is a $k$-piercing of the others. More often, however, we are interested in building up a code by adding neurons in succession so that each is a $k$-piercing of the ones that precede it. Codes that can be built this way are called inductively pierced.

\begin{defn}  A code $\CC\subseteq 2^{[n]}$ is {\it $k$-inductively pierced} if $\CC=\{\emptyset\}$, or there exists a neuron $i\in[n]$ which is a $k'$-piercing of $\CC$ for some $k'\le k$, and $\CC\backslash i$ is $k$-inductively pierced.

By the iterative nature of this definition, any inductively pierced code must have an ordering $i_1<i_2<\cdots< i_n$ of the neurons $[n]$ so that for each $j$, the neuron $i_j$ is a $k'$-piercing of the code $\CC\backslash\{i_{j+1},\ldots,i_n\}$ for some $k'\le k$.
We call such an order a {\it piercing order}.
\end{defn}
 
In general, an inductively pierced code may have many valid piercing orders, and so we do not assume this ordering is unique.   Example~\ref{ex:prcgorder} provides a code with multiple possible piercing orders. These orders do share some properties, which we list here.

\begin{enumerate}
\item  For any $k'$, a $k'$-piercing can only be constructed on a $k'-1$-piercing. As a result $i_j$ is at most a $j-1$-piercing, and in particular $i_1$ is a 0-piercing.

\item Any two piercing orders for the same code contain the same number of $k'$ piercings, for any $k'\leq k$. We will prove this result later (see Corollary~\ref{cor:pierceToBetti}).
\end{enumerate}

\begin{figure}
\includegraphics[width=4in]{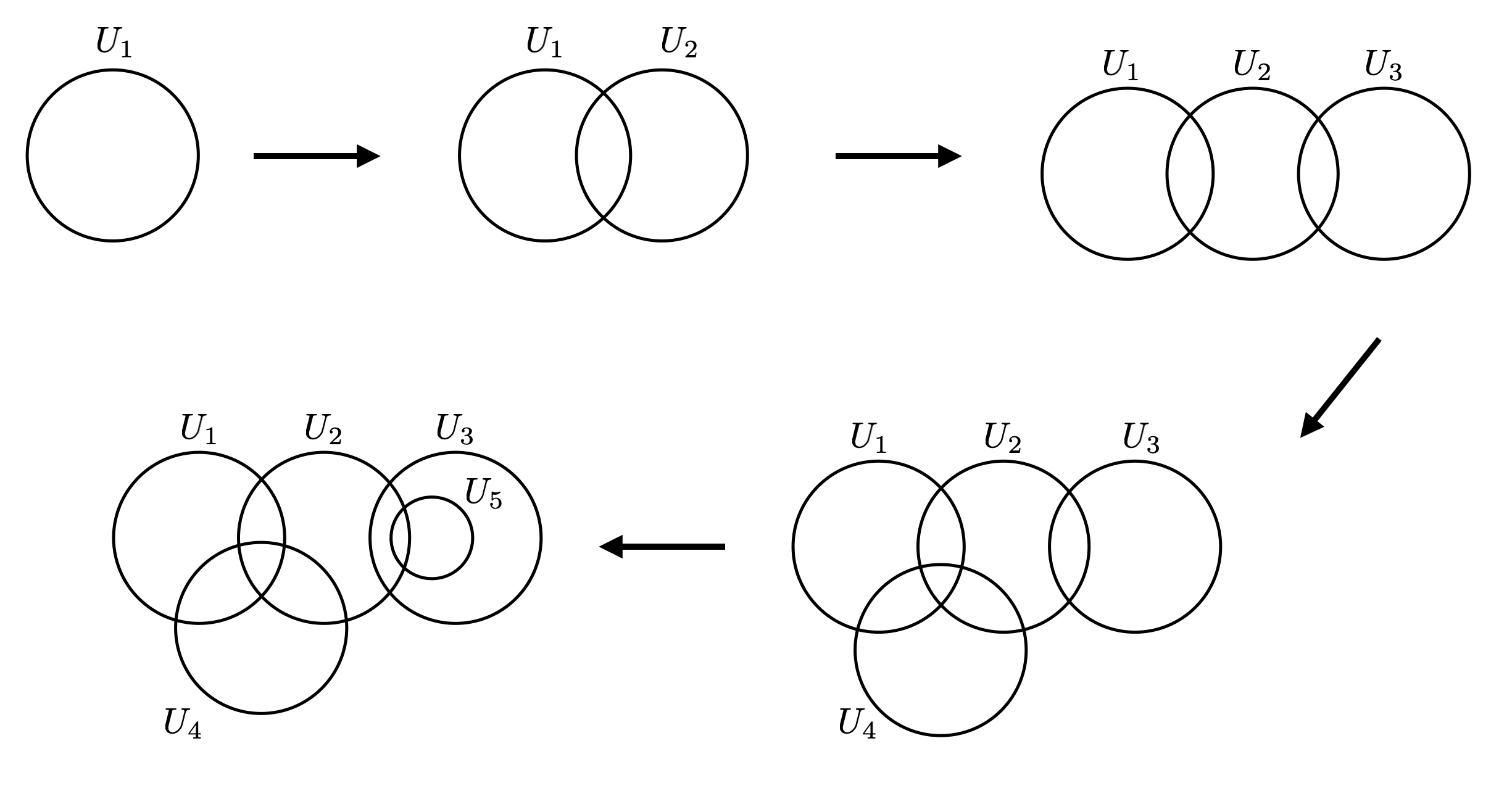}
\caption{Building the code from Example~\ref{ex:prcgorder} from the piercing order 1,2,3,4,5.
}
\label{fig:12345}
\end{figure}

\begin{example}\label{ex:prcgorder}
The code $\CC$ from Example~\ref{ex:key} is inductively pierced. We can build the code using  a succession of piercings as follows (see Figure~\ref{fig:12345}):

First, start with $\CC_0 = \{\emptyset\}$; this is inductively pierced by definition.

We then can take code $\CC_1 = \{\emptyset, 1\}$, where neuron 1 is a $0$-piercing (in this case, $\sigma = \tau = \emptyset$). Note that $\CC_1\backslash 1 = \CC_0$.

The code $\CC_2 = \{\emptyset, 1, 2, 12\}$  has neuron 2 as a 1-piercing (where $\sigma = \emptyset$, $\tau = 1$). Here, $\CC_2\backslash 2 = \CC_1$.

The code $\CC_3 = \{\emptyset, 1, 2, 3, 12, 23\}$ has neuron 3 as another 1-piercing (with $\sigma = \emptyset$, $\tau = 2$). Here, $\CC_3 \backslash 3 = \CC_2$.

The code $\CC_4 = \{\emptyset, 1, 2, 3, 4, 12, 23, 14, 24, 124 \}$ contains neuron $4$ as a 2-piercing of $1$ and $2$ (so $\sigma = \emptyset, \tau = 12)$. Here, $\CC_4 \backslash 4 = \CC_3$.

And finally, the code $\CC_5 = \CC = \{\emptyset, 1, 2, 3, 4, 12, 23, 14, 24, 35, 124, 235 \}$  has $5$ as a 1-piercing of 2 (within $3$), so $\sigma = 3$ and $\tau = 23$. Here, $\CC\backslash 5 = \CC_4$. 

The piercing order in this case is the same as the labels: 1,2,3,4,5. However, the code can also be built using the order $1,4,2,3,5$ as depicted in Figure~\ref{fig:14235} --note that in this case, neuron $4$ would be a 1-piercing with $\tau=1$, but $2$ would now be a $2-$piercing with $\tau=12$. 

However, not all orders are valid piercing orders - for example, $5,3,1,2,4$ is not valid as $3$ is not a piercing (of any order) of the code consisting of only neuron $5$.
\end{example}

\begin{figure}
\includegraphics[width=4in]{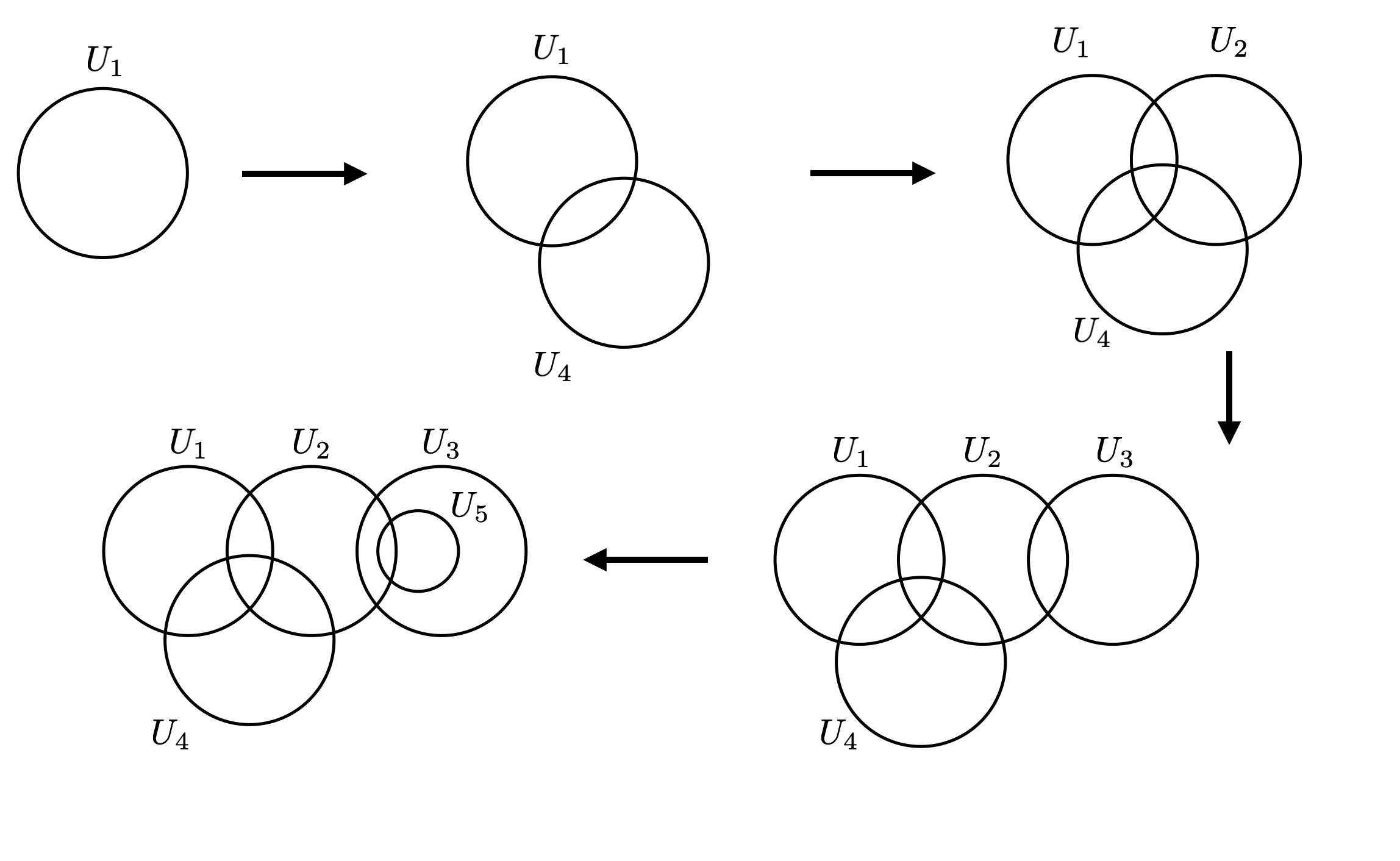}
\caption{Building the code from Example~\ref{ex:prcgorder} in the label order 1,4,2,3,5.}
 \label{fig:14235}
\end{figure}

Inductively pierced codes are detectable using the canonical form and an associated graph. This result (and the related definitions) are from \cite{curry2022recognizing}, but we provide it in its polarized form, since that is the form in which  we will use it. We note that polarization preserves the degrees of the generators of the canonical form, and the \pci of a neural code $\CC$ contains a binomial $x_ix_j$ (resp. $x_iy_j$,  $x_jy_i$,  $y_iy_j$) if and only if the canonical form of $\CC$ contains $x_ix_j$ (resp. $x_i(1-x_j)$,  $x_j(1-x_i)$,  $(1-x_i)(1-x_j)$), so polarization preserves the properties relevant to these results.
For this theorem, we need two additional definitions:

\begin{defn}\label{def:grc} Let $\CC$ be a code on $n$ neurons that has a degree two canonical form. The \textit{general relationship graph} $G(\CC)$ is the graph with vertices $V = [n]$, and an edge ${i,j}$ if and only if the \pci $J$ of $\CC$ does {\it not} contain any of the pseudo-monomials $x_ix_j$, $x_iy_j$, $x_jy_i$. 

[Note: because of the convention $\emptyset\in \CC$, the monomial $y_iy_j$ never appears in any \pci.]
\end{defn}

\begin{defn} A graph $G$ is {\it chordal} if it has no chordless cycle (every cycle of length $\geq 4$ has a chord) \cite{West}*{Definition 5.3.12}.

One notable property of chordal graphs is that they can be built up iteratively by adding so-called simplicial vertices; the similarity of inductive construction is not accidental.
\end{defn}

\begin{thm}[{\cite{curry2022recognizing}*{Theorem 6}}]
\label{thm:IpChar}
A neural code $\CC$ is inductively pierced if and only if
\begin{enumerate}
\item $CF(\CC)$ consists of degree two polynomials  and 
\item the graph $G(\CC)$ is chordal.
\end{enumerate}
\end{thm}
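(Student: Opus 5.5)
The plan is to prove both directions by induction on the number of neurons $n$, matching the piercing order with a simplicial elimination ordering of $G(\CC)$. The base case $\CC=\{\emptyset\}$ is immediate. Throughout, the main bookkeeping tool is how the canonical form changes when a neuron is added or deleted.

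\emph{Forward direction.} Let $i$ be a $k$-piercing of $\CC$ with interval $[\sigma,\tau]$, and assume inductively that $CF(\CC\backslash i)$ is quadratic and $G(\CC\backslash i)$ is chordal.
\begin{enumerate}
\item First I would check that $CF(\CC\backslash i)\subseteq CF(\CC)$. Codewords of $\CC$ not containing $i$ are exactly $\CC\backslash i$. A codeword containing $i$ deletes to an element of $[\sigma,\tau]\subseteq\CC\backslash i$. Hence a pseudo-monomial in the variables other than $x_i$ vanishes on $\CC$ if and only if it vanishes on $\CC\backslash i$.
\item Next I would show that the pseudo-monomials of $L(\CC)$ involving neuron $i$ are generated by the degree-two ones $x_i(1-x_j)$ for $j\in\sigma$ and $x_ix_j$ for $j\notin\tau\cup\{i\}$. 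A pseudo-monomial $x_i\cdot m$ lies in $L(\CC)$ exactly when $m$ vanishes on the full interval $[\sigma,\tau]$. Since every element of $[\sigma,\tau]$ is a codeword, this forces $m$ to contain a factor $1-x_j$ with $j\in\sigma$ or a factor $x_j$ with $j\notin\tau$. A pseudo-monomial $(1-x_i)\cdot m$ in $L(\CC)$ forces $m$ to vanish on $\CC\backslash i$, so it is divisible by an old generator. Thus $CF(\CC)$ is quadratic.
\item From this description I would read off the graph. The neighbours of $i$ in $G(\CC)$ are exactly $\tau\backslash\sigma$. Any two $j,l\in\tau\backslash\sigma$ are adjacent, because all four patterns on $j,l$ occur inside $[\sigma,\tau]\subseteq\CC$. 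So $i$ is simplicial, and adding a simplicial vertex to the chordal graph $G(\CC\backslash i)=G(\CC)-i$ preserves chordality.
\end{enumerate}

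\emph{Converse.} Assume $CF(\CC)$ is quadratic and $G(\CC)$ is chordal.
\begin{enumerate}
\item I would pick a simplicial vertex $i$ with an extra property: there is no relation $x_j(1-x_i)$ in $CF(\CC)$, so $U_i$ contains no other field. To find one, I would use that a chordal graph that is not complete has two nonadjacent simplicial vertices. The containment relations $x_j y_i$ form a partial order, and I would choose $i$ maximal in depth, for instance by restricting to a suitable clique or module. Justifying this choice is the step I expect to be the main obstacle.
\item Set $\sigma=\{j: x_iy_j\in J\}$ and $\tau=\sigma\cup N(i)$.
\item Show that $CF(\CC\backslash i)$ consists of the elements of $CF(\CC)$ not involving $i$. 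Then $\CC\backslash i$ satisfies both hypotheses, since its graph is the induced subgraph $G(\CC)-i$, which is chordal, and the induction applies.
\item Verify the two piercing conditions. The quadratic generators through $i$ force every codeword containing $i$ to lie in $[\sigma\cup i,\tau\cup i]$. For the reverse inclusion, and for $[\sigma,\tau]\subseteq\CC\backslash i$, I would argue that a missing element of the interval would give a pseudo-monomial in $L$ that is not divisible by any quadratic generator. This uses that $N(i)$ is a clique, so there are no pairwise relations among $\tau\backslash\sigma$, together with the relations $x_i y_j$ for $j\in\sigma$. That contradicts the assumption that $CF(\CC)$ is quadratic.
\end{enumerate}
Making this last contradiction precise, together with the choice of $i$ in step 1 of the converse, is where I expect the real work to lie.
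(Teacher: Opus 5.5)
Your forward direction is correct: it reproduces the recursion \eqref{eqn:canonRelate}, and $N(i)=\tau\setminus\sigma$ is a clique because $[\sigma,\tau]\subseteq\CC$. The paper does not prove this theorem; it cites it and describes the original argument as a simplicial elimination ordering whose reverse is a piercing order, with all the difficulty in the generators $x_jy_i$. Your converse has that same architecture, but it has a genuine gap exactly at that difficulty. Step~1, the existence of a vertex $i$ that is simplicial in $G(\CC)$ and contains no other field, is asserted but not proved, and choosing $i$ of maximal depth does not by itself produce one. Take $\CC=\{\emptyset,1,12,13,14,123,124\}$, with $J=(x_2y_1,x_3y_1,x_4y_1,x_3x_4)$ and graph edges $\{2,3\},\{2,4\}$. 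Here the vertex $2$ has maximal depth, but its neighbours $3,4$ are non-adjacent, and you give no argument that some deepest vertex is simplicial. Everything downstream depends on this choice. First, the codewords containing $i$ lie in $[\sigma\cup\{i\},\tau\cup\{i\}]$ only because every non-neighbour of $i$ outside $\sigma$ is then disjoint from $U_i$. Second, condition (ii) demands $[\sigma,\tau]\subseteq\CC$, and this comes from $\rho_\gamma$. The only quadratic divisors of $\rho_\gamma$ not ruled out by the codeword $\gamma\cup\{i\}$ have the form $x_j(1-x_i)$. So it is the minimality of $i$, not the clique property, that does the work there.

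The missing step has a short proof. Take any simplicial vertex $i$ and set $D(i)=\{j : x_jy_i\in J\}$. Let $j\in D(i)$ and $l\notin D(i)\cup N(i)\cup\{i\}$. Then $x_ix_l\in J$ or $x_iy_l\in J$. Since every codeword containing $j$ contains $i$, also $x_jx_l\in J$ or $x_jy_l\in J$. Together with $x_jy_i\in J$, this gives $N(j)\subseteq D(i)\cup N(i)$. The induced subgraph $H$ on $D(i)\cup N(i)$ is chordal with $N(i)$ a clique. By the two-nonadjacent-simplicial-vertices lemma you quote (or trivially, if $H$ is complete), some $j\in D(i)$ is simplicial in $H$, hence in $G(\CC)$, since $N(j)\subseteq D(i)\cup N(i)$. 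Transitivity gives $D(j)\subseteq D(i)$ while $j\in D(i)\setminus D(j)$, so iterating ends at a simplicial vertex containing no other field. This descent uses that no two fields coincide; otherwise $i\in D(j)$ can happen and the process cycles. Your proposal never states that hypothesis, and it is genuinely needed. The code $\CC=\{\emptyset,12\}$ has quadratic canonical form $\{x_1(1-x_2),x_2(1-x_1)\}$ and an edgeless, hence chordal, graph. Yet it is not inductively pierced, because neither $\CC\backslash 1$ nor $\CC\backslash 2$ is contained in $\CC$, and no vertex as in your Step~1 exists. Your claim that the containment relations ``form a partial order'' tacitly assumes this hypothesis; the paper imposes it in the remark following the theorem.
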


Since determining whether a graph is chordal is relatively efficient, this result allows us to realistically detect whether a code is inductively pierced. Other results in \cite{curry2022recognizing} prove that such codes are always realizable using convex sets, and provide upper bounds on the 
 minimum dimension required to build such a realization. 

\begin{rem} The result as stated is only for strictly degree-two canonical forms. However, for practical purposes, degree-one terms can usually be dealt with via a minor modification of the code.  The term $y_i$ is impossible due to the requirement that $\CC$ contain the empty set. The term $x_i$ would indicate that neuron $i$ never appears in any set in $\CC$; in this case, we can replace $\CC$ with $\CC\backslash i$ and eliminate the problem altogether, without changing any important property of the original code.

Technically, the original result also requires that the canonical form contains at most one degree-two term for each pair $i,j$. However, due to properties of the canonical form, most combinations of such terms cannot possibly appear in the canonical form anyway.  The one combination which can occur ($x_iy_j$ and $x_jy_i$)  would indicate that neurons $i$ and $j$ are identical in $\CC$, and so we can work with an equivalent code where we replace both with one single neuron. In this paper, we typically assume that this situation does not occur.
\end{rem}

\section{Homological Invariants of Inductively Pierced Codes}
\label{sec:fiberproduct}

In this section, we leverage Theorem \ref{thm:reg2indpierced} to compute common homological invariants of $S$ modules, namely, Betti numbers and regularity. For background, see Appendix~\ref{sec:betti}. We include in this section some background on multi-gradings and their associated homological invariants.

\begin{notation}
\label{not:piercingideal}
    Throughout this section, $\CC$ will be a neural code on $n$ neurons, $n$ will be a $k$-piercing of $\CC \backslash \{n\}$ (the code obtained by deleting $n$ from $\CC$), and $J_{n}$ will be the \pci of $\CC$ in $S=\F_2[x_1,\ldots,x_{n},y_1,\ldots,y_{n}]$. We let $J_{n - 1}$ denote the \pci of $\CC \backslash \{n\}$, expanded to $S$ since this does not impact the homological invariants studied here. We will assume all other ideals discussed  live in $S$ unless otherwise specified.
\end{notation}

First we give a homological characterization of inductively pierced codes, which can be viewed as adding a condition to Theorem \ref{thm:IpChar}.

\begin{thm}
\label{thm:reg2indpierced}
    Let $\CC$ be a neural code on $n$ neurons and $J$ its \pci contained in $S=\F_2[x_1,\ldots,x_n,y_1,\ldots,y_n]$. Assume that:
    \begin{enumerate}
        \item the canonical form of $J$ is generated by quadratic forms,
        \item no union of $U_i$ covers all of $X$, and
        \item no $U_i=U_j$ for $i \ne j$.
    \end{enumerate}   
  Then  $\CC$ is inductively pierced if and only if $\reg{J}=2$.
\end{thm}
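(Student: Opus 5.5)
Since $J$ is generated by squarefree quadratic monomials in the $2n$ variables $x_1,\ldots,x_n,y_1,\ldots,y_n$, it is the edge ideal $I(H)$ of a graph $H$ on these $2n$ vertices. Here $\{x_i,x_j\}$ is an edge when $x_ix_j\in J$, and $\{x_i,y_j\}$ is an edge when $x_iy_j\in J$. There are no $y_iy_j$ edges, by the convention $\emptyset\in\CC$ (this is also where hypothesis (2) enters). I will use Fröberg's theorem: an edge ideal has a linear resolution, i.e. $\reg{J}=2$, if and only if the complement graph $\overline{H}$ is chordal. The theorem then reduces, via Theorem~\ref{thm:IpChar}, to showing that $\overline{H}$ is chordal if and only if $G(\CC)$ is chordal. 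Hypotheses (1) and (3) are exactly what make Theorem~\ref{thm:IpChar} applicable: the canonical form is quadratic, and at most one quadratic term occurs for each pair $i,j$.

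To compare the two graphs, I describe $\overline{H}$ directly. All $y_i$ are pairwise adjacent in $\overline{H}$, so $Y=\{y_1,\ldots,y_n\}$ is a clique. The vertices $x_i,x_j$ are adjacent in $\overline{H}$ iff $x_ix_j\notin J$. The vertices $x_i,y_j$ (with $i\neq j$) are adjacent iff $x_iy_j\notin J$. Each $x_i$ is adjacent to $y_i$, since $x_iy_i$ is never a generator. The subgraph of $\overline{H}$ induced on $\{x_1,\ldots,x_n\}$ contains $G(\CC)$, because an edge of $G(\CC)$ requires the absence of all three monomials $x_ix_j$, $x_iy_j$, $x_jy_i$.

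For the direction ``$\overline{H}$ chordal $\Rightarrow$ $G(\CC)$ chordal'', take a chordless cycle $C$ of length at least $4$ in $G(\CC)$ on $x$-vertices. It is still a cycle in $\overline{H}$, but non-edges of $G(\CC)$ may become chords in $\overline{H}$ when, for instance, $x_iy_j\in J$ while $x_ix_j\notin J$. In that situation I will replace the vertex $x_i$ or $x_j$ by suitable $y$'s, or use the relations $x_iy_j\in J$ (meaning $U_i\subseteq U_j$) and $x_ix_j\in J$ (disjointness) to reroute the cycle and produce a chordless cycle of length at least $4$ in $\overline{H}$.

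For the converse, I will use a perfect elimination ordering of $G(\CC)$ coming from a piercing order, by induction on $n$ along Notation~\ref{not:piercingideal}. Suppose neuron $n$ is a $k$-piercing with data $\sigma\subseteq\tau$. Then the generators of $J_n$ involving index $n$ are exactly:
\begin{itemize}
\item $x_ny_s$ for $s\in\sigma$;
\item $x_nx_j$ for the $j\notin\tau$ that are not containing $U_\sigma$;
\item terms $x_jy_n$ are impossible, as the interval condition forces them absent.
\end{itemize}
I then show that $x_n$ is simplicial in $\overline{H}$. Its neighbours are $y_n$, the $y_j$ with $j\notin\sigma$, and the $x_j$ with $j\in\tau$ or $U_\sigma\subseteq U_j$. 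Checking pairwise adjacency among these uses the fact that every element of $[\sigma,\tau]$ lies in $\CC\backslash n$, which kills any $x_ax_b$ or $x_ay_b$ among these indices. Next I show $y_n$ is simplicial once $x_n$ is removed: its neighbours are all of $Y$ together with the $x_j$ for $j\neq n$, and I verify these form a clique on the remaining $x$'s. If that fails, I instead remove $y_n$ first while arguing that $Y$ remains a clique.

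Finally, the graph left after deleting $x_n$ and $y_n$ is $\overline{H}$ for $J_{n-1}$, which is chordal by induction. Adding simplicial vertices preserves chordality, so $\overline{H}$ is chordal.

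The main obstacle is this last verification, and especially the exact description of the generators of $J_n$ involving $n$ in terms of $\sigma$ and $\tau$; I would prove that description first as a lemma.
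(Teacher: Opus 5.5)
Your reduction is legitimate and differs from the paper's. You apply Fr\"oberg's theorem to the edge ideal of the $2n$-vertex graph $H$ and then compare $\overline{H}$ with $G(\CC)$. The paper instead uses Lemma~\ref{lem:indRegSeq} to mod out the regular sequence $x_1-y_1,\ldots,x_n-y_n$. This preserves graded Betti numbers and turns $J$ into the edge ideal, in $n$ variables, of the complement of $G(\CC)$ itself. Fr\"oberg then settles both directions at once, with no comparison of graphs needed.

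\textbf{First gap.} In your route, the direction ``chordless cycle $C$ in $G(\CC)$ $\Rightarrow$ chordless cycle in $\overline{H}$'' is only announced, and that is where the work lies. Each non-consecutive pair of $C$ carries exactly one relation. If you represent each vertex $v$ of $C$ by $x_v$ or $y_v$, then:
\begin{itemize}
\item a containment $U_a\subseteq U_b$ forces the representatives $x_a,y_b$;
\item a disjointness forces two $x$'s;
\item two $y$'s at non-consecutive positions always give a chord, since $Y$ is a clique.
\end{itemize}
So you must rule out three conflicts: a chain $U_a\subseteq U_b\subseteq U_c$ on $C$; a vertex $b$ containing one vertex of $C$ and disjoint from another; and two non-consecutive ``containing'' vertices.

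The missing ingredients are these.
\begin{enumerate}
\item[(i)] Since $L$ contains no linear pseudo-monomials, for distinct $a,c$ one has $x_ay_b,x_by_c\in J\Rightarrow x_ay_c\in J$, and $x_ay_b,x_bx_c\in J\Rightarrow x_ax_c\in J$.
\item[(ii)] Hence if $U_a\subseteq U_c$ with $a,c\in C$, every $G(\CC)$-neighbour $d$ of $a$ is either a $G(\CC)$-neighbour of $c$ or satisfies $U_d\subseteq U_c$. Walking around $C$ from $a$ then gives $U_d\subseteq U_c$ for every $d\in C$ other than $c$ and its two cycle neighbours.
\end{enumerate}
Property (ii) excludes all three conflicts. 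For a chain, the two cycle neighbours of $c$ would have to be cycle neighbours of $b$, leaving no room for $a$. Replacing exactly the containing vertices by their $y$'s then gives a chordless cycle of the same length in $\overline{H}$. Without an argument of this kind, half of your equivalence is unproved.

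\textbf{Second gap.} In the inductive direction your check that $x_n$ is simplicial is correct. However, the claim that $y_n$ is simplicial in $\overline{H}-x_n$ is false. Its neighbourhood there is the entire vertex set of $\overline{H}(J_{n-1})$, which is a clique only when $J_{n-1}=0$, and deleting $y_n$ first does not help. The repair is short: no generator of $J_n$ involves $y_n$, so $y_n$ is a universal vertex of $\overline{H}$. A universal vertex lies on no chordless cycle of length at least $4$, so $\overline{H}-x_n$ is chordal exactly when $\overline{H}(J_{n-1})$ is. Equivalently, $x_n,x_{n-1},\ldots,x_1$ followed by the $y$'s is a perfect elimination ordering of $\overline{H}$.
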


\begin{rem}\label{rem:onlyoneperpair}
    Notice that the listed conditions (1)-(3) imply that for each pair $i,j$, $J$ has at most one generator chosen from $x_ix_j$, $x_iy_j$, and $x_jy_i$. Further, these are the only generators of $J$.
\end{rem}

To prove this result, we need the following lemma. See Definition \ref{def:regSeq} for the definition of a regular sequence.

\begin{lemma}\label{lem:indRegSeq}
    Let $\CC$ be a neural code on $n$ neurons and $J$ its \pci contained in $S=\F_2[x_1,\ldots,x_n,y_1,\ldots,y_n]$. Assume the hypotheses (1)-(3) of Theorem \ref{thm:reg2indpierced}. Then the sequence $(x_1 - y_1, \ldots, x_n - y_n)$ is $S/J$-regular.
\end{lemma}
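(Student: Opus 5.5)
We argue by induction on $m$. For $0\le m\le n$, let $\phi_m\colon S\to S_m$ be the surjection onto $S_m=\F_2[x_1,\ldots,x_n,y_{m+1},\ldots,y_n]$ that sends $y_i\mapsto x_i$ for $i\le m$ and fixes all other variables. Then
\[S/\bigl(J+(x_1-y_1,\ldots,x_m-y_m)\bigr)\cong S_m/J_m,\]
where $J_m$ is the ideal generated by the $\phi_m$-images of the generators of $J$. By Remark~\ref{rem:onlyoneperpair}, the generators of $J$ are quadrics $x_ix_j$ or $x_iy_j$ with $i\ne j$. There is no $y_iy_j$, and no $x_iy_i$ because pseudo-monomials have $\sigma\cap\tau=\emptyset$. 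Hence every image is a squarefree quadric, and $J_m$ is a squarefree monomial ideal, i.e.\ the edge ideal of a graph $H_m$ on the variables of $S_m$. It therefore suffices to show, for each $m$, that $x_m-y_m$ is a nonzerodivisor on $S_{m-1}/J_{m-1}$.

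For a squarefree monomial ideal, the associated primes are exactly the minimal primes. These are generated by the minimal vertex covers of $H_{m-1}$. A linear form $x_m-y_m$ in two distinct variables is a zerodivisor if and only if it lies in some associated prime. A monomial prime contains $x_m-y_m$ if and only if it contains both $x_m$ and $y_m$. So the goal becomes: \emph{no minimal vertex cover $C$ of $H_{m-1}$ contains both $x_m$ and $y_m$.}

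Suppose such a $C$ exists. By minimality, $y_m$ has a neighbor $w\notin C$ and $x_m$ has a neighbor $z\notin C$. Since no generator involves two $y$'s, the edge at $y_m$ must be the image of a generator $x_jy_m$. Thus $w=x_j$ and $x_j(1-x_m)\in CF(\CC)$, i.e.\ $U_j\subseteq U_m$. The edge at $x_m$ is the image of either $x_mx_i$ (so $U_m\cap U_i=\emptyset$) or $x_my_i$ (so $U_m\subseteq U_i$). In the second case $z=y_i$ if $i\ge m$, and $z=x_i$ if $i<m$.

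\begin{itemize}
\item If $U_m\cap U_i=\emptyset$, then $U_j\cap U_i=\emptyset$, so $x_ix_j\in L$. Every pseudo-monomial of $L$ is divisible by an element of $CF(\CC)$. Since $CF(\CC)$ is quadratic and $L$ contains no linear pseudo-monomial, $x_ix_j\in CF(\CC)$. Its image $x_ix_j$ is then an edge of $H_{m-1}$ with both ends outside $C$, a contradiction.
\item If $U_m\subseteq U_i$, then $U_j\subseteq U_i$, so $x_j(1-x_i)\in CF(\CC)$ by the same argument. Its image is $x_jy_i$ or $x_jx_i$, which is exactly the edge $\{w,z\}$, again uncovered.
\end{itemize}

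The degenerate cases $i=j$ must be excluded separately. If $z=x_j$ with $x_mx_j$ and $x_jy_m$ both in $J$, then $x_j=x_j(1-x_m)+x_jx_m\in L$, contradicting hypothesis (1). If $z=y_j$, then $U_j\subseteq U_m\subseteq U_j$, contradicting hypothesis (3). The corresponding substituted case $i=j<m$ is handled identically.

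The main obstacle is the bookkeeping in the substituted ring. After $y_i\mapsto x_i$ for $i<m$, a vertex of $H_{m-1}$ may arise from several original generators. So one must check carefully that each edge used in the contradiction genuinely exists in $H_{m-1}$ as the image of a canonical-form generator. This is where the transitivity facts ($U_j\subseteq U_m\subseteq U_i$, and disjointness propagating to subsets), together with the assumption that $CF(\CC)$ consists entirely of quadrics, are used to produce the required generator of $J$ before applying $\phi_{m-1}$.
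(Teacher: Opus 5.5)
Your proof is correct, but it establishes the nonzerodivisor property by a different route than the paper.

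Both arguments check the same criterion. For a monomial ideal with no generator divisible by $x_my_m$, the form $x_m-y_m$ is a nonzerodivisor exactly when no associated prime contains both $x_m$ and $y_m$. Equivalently, no monomial outside the ideal is multiplied into it by both $x_m$ and $y_m$.

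\textbf{The paper's route.} It writes $J=\fN_i+x_i\fX_i+y_i\fY_i$ and imports $\fX_i\cap\fY_i\subseteq J$ from Theorem~5.5 of Geller--R.G. A prime-avoidance argument then gives $\operatorname{Ann}(\overline{x_i-y_i})\subseteq\overline{\fX_i\cap\fY_i}=0$. For the later elements of the sequence, it simply asserts the proof is identical in the quotients $M_i$. It never checks that the partially substituted ideal $J_{(i)}$ still has the needed intersection property.

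\textbf{Your route.} You observe that under (1)--(3) every intermediate quotient $S_{m-1}/J_{m-1}$ is an edge ideal, so its associated primes are the minimal vertex covers. You then verify the criterion by hand. The uncovered neighbors $w$ of $y_m$ and $z$ of $x_m$ must be joined by an edge of $H_{m-1}$. You build that edge from two facts:
\begin{enumerate}
\item containment and disjointness are transitive;
\item a quadratic pseudo-monomial in $L$ with no linear pseudo-monomial divisor in $L$ lies in $CF(\CC)$.
\end{enumerate}
Hypotheses (1) and (3) dispose of the degenerate cases $i=j$.

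\textbf{What each buys.} The paper's route is shorter and packages the combinatorics into a cited structural result. Yours is self-contained and shows exactly where (1) and (3) enter. Without (3) the lemma fails: for $J=(x_1y_2,x_2y_1)$, coming from $\CC=\{\emptyset,12\}$, the form $x_1-y_1$ multiplies $x_2y_2\notin J$ into $J$. More importantly, your tracking of edges through $\phi_{m-1}$ genuinely justifies the later steps of the sequence, which the paper only sketches.

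\textbf{Two cosmetic points.} Your argument is not really an induction, since you treat each $m$ directly. You should also record the trivial condition $S_n/J_n\neq 0$ required by Definition~\ref{def:regSeq}.
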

\begin{proof}
    Since $J$ is square-free and a neural ideal, no generator of $J$ is divisible by $x_i y_i$ for all $1 \leq i \leq n$ and we obtain the ideals $\fN_i$, $\fX_i$, and $\fX_i$ satisfying
    \[J = \fN_i + \fX_i x_i + \fY_i y_i\] where $x_i$ and $y_i$ do not divide any generator of $\fN_i$, $\fX_i$, and $\fY_i$. By \cite{GellerRG}*{Theorem~5.5}, we have $\fX_i \cap \fY_i \subseteq J$ for all $1 \leq i \leq n$. We also find that the images $\fX_i$ and $\fY_i$ in $S/J$, denoted $\overline{\fX_i}$ and $\overline{\fY_i}$ respectively, satisfy \[\overline{\fX_i} = \Ann{S/J}{x_i} \quad \text{ and } \quad \overline{\fY_i} = \Ann{S/J}{y_i}.\] Since $\overline{\fX_i}$ is a square-free monomial ideal, it is equal to its radical and
    \[\overline{\fX_i} = \bigcap_{\overline{\fq} \in \Supp[S/J]{\overline{x_i}}} \overline{\fq} = \bigcap_{J + \fX_i \subseteq \fq \subset S} \overline{\fq}  \] 
     where $\Supp[S/J]{M}$ is the set of all primes $\fq$ in $S/J$ such that the localization of $M$ at $\fq$ is non-zero, i.e., $M_{\fq} \neq 0$.

    Since we are intersecting over all primes containing $J + \fX_i$, we may assume without loss of generality that $\fq$ is minimal over $J + \fX_i$. Since $J + \fX_i$ is monomial, $\fq$ is also monomial, meaning $x_i - y_i \in \fq$ if and only if $x_i, y_i \in \fq$. Combining the fact that $\fq$ is minimal and $x_i$ does not divide any generators of $J + \fX_i$, we find $x_i \notin \fq$ and thus $x_i - y_i \notin \fq$. Now consider $s \in S$ such that $s(x_i - y_i) \in J \subset \fq$. Since $q$ is prime, we find $s \in \fq$, allowing us to conclude that for all $\overline{s} \in S/J$ satisfying $\overline{s}(\overline{x_i - y_i}) = \overline{0}$ in $S/J$, we have $\overline{s} \in \overline{\fq}$. This tells us that $\overline{x_i}$ and $\overline{y_i}$ correspond to distinct equivalence classes of $(S/J)_{\overline{\fq}}$ and thus $(\overline{x_i - y_i})_{\overline{\fq}} \neq 0$. This demonstrates that
    \[\Ann{S/J}{\overline{x_i - y_i}} \subseteq \bigcap_{\overline{\fq} \in \Supp[S/J]{\overline{x_i}}} \overline{\fq} = \overline{\fX_i}.\]
    An analogous argument shows $\Ann{S/J}{\overline{x_i - y_i}} \subseteq \overline{\fY_i}.$
    Combining these two results gives
    \[\Ann{S/J}{\overline{x_i - y_i}} \subseteq \overline{\fX_i} \cap \overline{\fY_i} = \overline{\fX_i \cap \fY_i} \subseteq \overline{J} = 0,\] that is, $\overline{x_i - y_i}$ is a non-zero divisor in $S/J$. Moreover, $\overline{x_i - y_i}$ is not a unit in $S/J$ since the only unit is 1. Thus $\overline{x_i-y_i}$ is a regular element in $S/J$, and $x_i - y_i$ is $(S/J)$-regular for all $1 \leq i \leq n$.

    Specifically, $x_1 - y_1$ is $(S/J)$-regular. For $i > 1$, set
    \[ M_i := \frac{\frac{S}{J}}{(x_1 - y_1, \ldots, x_{i-1} - y_{i-1}) \frac{S}{J}} \cong \frac{\F_2[x_1,\ldots,x_n,y_i,\ldots,y_n]}{J_{(i)}} \]
    where $J_{(i)}$ is the image of $J$ with each $y_1,\ldots,y_{i-1}$ replaced with $x_1, \ldots, x_{i-1}$, respectively. The proof that $\overline{x_i - y_i}$ is a regular element in $M_i$, viewed as a ring, is identical to the proof that $\overline{x_i - y_i}$ is a regular element in $S/J$. This shows that $x_i - y_i$ is $M_i$-regular for each $i > 1$, implying that the sequence $x_1 - y_1, \ldots, x_n - y_n$ is $(S/J)$-regular.
\end{proof}

We now prove Theorem \ref{thm:reg2indpierced}.

\begin{proof}[Proof of Theorem \ref{thm:reg2indpierced}]
    By our hypotheses, every generator of the canonical form of $J$ is of the form $x_ix_j$ or $x_iy_j$ for some $i,j$. If for some fixed $i$ and $j$, we have both $x_ix_j$ and $x_iy_j$, then the canonical form has a generator dividing $x_i$ by \cite{GellerRG}*{Algorithm 5.9}, contradicting our hypothesis that the canonical form consists of quadratics. If for some fixed $i$ and $j$, we have both $x_iy_j$ and $x_jy_i$, then $U_i=U_j$, contradicting our other hypothesis. Hence for any pair $i,j$, we have at most one generator chosen from the set $\{x_ix_j,x_iy_j,x_jy_i\}$.
    
    Consequently, we can draw the general relationship graph $\mathcal{G}(\CC)$ of $\CC$ as in \cite{curry2022recognizing}, with a vertex for each neuron and an edge connecting vertices $i$ and $j$ if $J$ has no generator from the set $\{x_ix_j,x_iy_j,x_jy_i\}$. The neural code $\CC$ is inductively pierced if and only if this graph is chordal by Theorem \ref{thm:IpChar}.

    If every generator of the canonical form is of the form $x_ix_j$, then $J$ is the edge ideal of the complement of $\mathcal{G}(\CC)$. By \cite{froberg}*{Theorem 1} as interpreted in \cite{banerjee}*{Theorem 6}, $\reg{J}=2$ if and only if $\mathcal{G}(\CC)$ is chordal, i.e. if and only if $\CC$ is inductively pierced.
    
    Otherwise, there exists a $y_i$ that divides one of the generators of $J$. In this case, set $S'=\F_2[x_1,\ldots,x_n]$ viewed as an $S$-module via the isomorphism with $S/(x_1-y_1,x_2-y_2,\ldots,x_n-y_n)$. By Lemma \ref{lem:indRegSeq}, $x_1-y_1,\ldots,x_n-y_n$ forms a $S/J$-regular sequence on $S/J$. By Corollary \ref{cor:freeresregseq}, tensoring a minimal free resolution of $J$ with this quotient gives a minimal free resolution of the ideal $J \otimes_S S'$ (over $S'$), where the degrees of the maps don't change. Hence $J$ and $J \otimes_S S'$ have the same regularity. This ideal will have the same generators as $J$, but with every $y_i$ replaced by the corresponding $x_i$. As a result, the new list of generators is in canonical form by \cite{GellerRG}*{Theorem 5.5}. The corresponding neural code $\CC'$ will have the same general relationship graph as $\mathcal{G}(\CC)$.
    
    By the reasoning in the all $x$'s case, $J \otimes_S S'$ has regularity 2 over $S'$ if and only if $\mathcal{G}(\CC')=\mathcal{G}(\CC)$ is chordal, and hence if and only if $\CC$ is inductively pierced. Thus $J$ has regularity 2 if and only if $\CC$ is inductively pierced.
\end{proof}

\begin{rem}\label{rem:linear}
    Note that if $J$ is the \pci for some inductively pierced code $\CC$, then it is generated by quadratics and, by Theorem \ref{thm:reg2indpierced}, has regularity 2. This tells us that the minimal free resolution of $J$ over $S$ is linear; that is, if $F$ is the minimal free resolution $J$ over $S$ then the entries of any matrix representation of the differentials $\partial^{F}$ are either 0 or linear terms.
\end{rem}

 This gives detailed information about the Betti and graded Betti numbers of polarized neural ideals.

\begin{rem}
    For our purposes, we are interested in the Betti numbers of $S/J$ instead of the Betti numbers of $J$. Fortunately, one can easily translate between the graded Betti numbers of $S/J$ and $J$ by observing that
\[\betti[S]{i,j}{S/J} = \begin{cases} 1 & i = 0 = j \\ \betti[S]{i-1,j}{J} & i > 0 \\ 0 & \text{otherise} \end{cases}.\] In the case where $J$ is the \pci of an inductively pierced code and $i > 0$, we find \[\betti[S]{i,j}{S/J} = \begin{cases} \betti[S]{i-1,i+1}{J} & j = i + 1 \\ 0 & j \neq i + 1 \end{cases}. \] As a consequence, if $F$ is the minimal free resolution of $S/J$ over $S$, then \[F_i = \begin{cases} S[0] & i = 0 \\ S[-(i+1)]^{\betti[S]{i,i+1}{S/J}} & i > 0 \\ 0 & i < 0 \end{cases}.\]
\end{rem} 

In order to distinguish more properties of \pci s, we extend these observations to a finer grading on the ring $S$, namely $\deg x=(1,0)$ and $\deg y=(0,1)$, and compute the \textit{multigraded Betti numbers}, defined below. See \cite{millersturmfels}*{Chapter 8} for general versions of the material below and more discussion of multigraded polynomial rings and Betti numbers.

\begin{defn} \label{def:multigradedbetti}
    Let $\F$ be any field. Assign $S=\F[x_1,\ldots,x_n,y_1,\ldots,y_n]$ a multigrading in which $\deg(x_i)=(1,0)$ and $\deg(y_i)=(0,1)$. Let $J \subseteq S$ be a monomial ideal (which is inherently homogeneous under the multigrading of $S$). Let $F_\bullet$ be a minimal multigraded free resolution of $S/J$.  The \textit{multigraded Betti numbers} of $S/J$ record the multigraded pieces of the $F_w$, i.e. if $F_w=\oplus_{u,v} S[-(u,v)]^{t_{w,u,v}}$, then
    \[\beta_{w,u,v}^S(S/J):=t_{w,u,v}.\]

    If $J$ is the \pci of a neural code $\CC$, we may refer to the multigraded Betti numbers of $\CC$.
\end{defn}

\begin{rem}\label{rem:wuvTwist}
    Suppose $J$ is the \pci of some inductively pierced neural code $\CC$. If $F$ is the minimal free resolution of $S/J$ over $S$ then $F_0 = S[(0,0)]$ and, for $w > 0$, \[F_w = \bigoplus_{u + v = w + 1} S[-(u,v)]^{\betti[S]{w,u,v}{S/J}}.\] Moreover, for $w > 0$ the Betti numbers satisfy \[\betti[S]{w,w+1}{S/J} = \sum_{u + v = w + 1} \betti[S]{w,u,v}{S/J} \] and $\betti[S]{w,j}{S/J} = 0$ when $j \neq w + 1$.
\end{rem}

In the next remark, we demonstrate how the \pci of an inductively pierced code is constructed recursively. 

\begin{rem}\label{rem:pierceIdeal}
Combining Definition~\ref{def:kpiercing}, Theorem~\ref{thm:canonpolarizes}, and Lemma~\ref{lem:polarizedmingens}, we get the following polarized version of \cite{curry2022recognizing}*{Lemma 16}: if $J_n$ is the \pci of an inductively pierced code $\CC$ as in Notation \ref{not:piercingideal},
\begin{align}
  J_{n} = J_{n-1} + (x_ix_{n} : i \in [n-1] \backslash \tau) + (y_j x_{n} : j \in \sigma).  \label{eqn:canonRelate}
\end{align}
In other words, when $n$ is a $k$-piercing of $\CC \backslash (n)$, $J_n$ is constructed from $J_{n-1}$ by adding $x_ix_n$ for any $i$ such that $U_n \cap U_i=\emptyset$, and $y_jx_n$ for any $j$ such that $U_n \subseteq U_j$.

\end{rem}

\begin{defn}
\label{def:piercingideal} 
    We define the \textit{piercing ideal} for neuron $n$ to be \[\fp_{n} := (x_i : i \in [n-1] \backslash \tau) + (y_j : j \in \sigma).\]
\end{defn}

\begin{rem}\label{rem:FiberIdeal}
We note that $\fp_{n}$ is prime, is resolved by the Koszul complex $K^S(\fp_{n})$, and allows us to rewrite Equation \eqref{eqn:canonRelate} as 
\begin{align}
    J_{n} &= J_{n-1} + (x_{n})\fp_{n}. \label{eqn:canonIdeal}
\end{align}
\end{rem}

\begin{prop}\label{prop:inP}
Consider a neural code $\CC \subseteq 2^{[n]}$, and assume that $n$ is a $k$-piercing of $\CC$ and $J_{n-1}$ is as in Notation~\ref{not:piercingideal}. Defining $\fp_{n}$ as in Definition~\ref{def:piercingideal}, one has $J_{n-1} \subset \fp_{n}$.
\end{prop}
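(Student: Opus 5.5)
Since $\fp_{n}$ is generated by variables, it is a monomial prime, and a monomial lies in $\fp_{n}$ exactly when it is divisible by one of those generating variables. By Theorem~\ref{thm:canonpolarizes}, $J_{n-1}$ is generated by the polarizations $\cP(g)$ of the pseudo-monomials $g$ in the canonical form of $\CC\backslash n$. It therefore suffices to show that each such $\cP(g)$ is divisible by some $x_i$ with $i\in[n-1]\backslash\tau$ or by some $y_j$ with $j\in\sigma$. The argument does not use the degree of $g$, so the quadratic structure of the canonical form is not needed. Also, no variable indexed by $n$ appears in $J_{n-1}$, so only indices in $[n-1]$ are involved.

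I will argue by contradiction. Write $g=\prod_{a\in A}x_a\prod_{b\in B}(1-x_b)$ with $A\cap B=\emptyset$ and $A,B\subseteq[n-1]$, so that $\cP(g)=\prod_{a\in A}x_a\prod_{b\in B}y_b$. Suppose $\cP(g)\notin\fp_{n}$. Then no $x_a$ with $a\notin\tau$ divides it, which gives $A\subseteq\tau$. Likewise no $y_b$ with $b\in\sigma$ divides it, which gives $B\cap\sigma=\emptyset$.

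Next I construct a codeword on which $g$ does not vanish. Set $\gamma=\sigma\cup A$. Since $\sigma\subseteq\tau$ and $A\subseteq\tau$, we have $\sigma\subseteq\gamma\subseteq\tau$, so $\gamma\in[\sigma,\tau]$. Condition (i) of Definition~\ref{def:kpiercing} then gives $\gamma\in\CC\backslash n$.

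Now evaluate $g$ at $\gamma$. Each factor $x_a$ equals $1$, because $A\subseteq\gamma$. Each factor $1-x_b$ also equals $1$: we have $b\notin\sigma$ because $B\cap\sigma=\emptyset$, and $b\notin A$ because $A\cap B=\emptyset$, so $b\notin\gamma$. Hence $g(\gamma)=1$. This contradicts $g\in L(\CC\backslash n)$, since every element of that ideal vanishes on every codeword of $\CC\backslash n$ by Definition~\ref{def:neuralring}. Therefore every generator of $J_{n-1}$ lies in $\fp_{n}$, and so $J_{n-1}\subset\fp_{n}$.

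The only delicate step is choosing the witness codeword $\gamma$. Both inclusions $\sigma\subseteq\gamma\subseteq\tau$ are needed to place $\gamma$ in the interval, and $\gamma$ must avoid $B$. The choice $\gamma=\sigma\cup A$ satisfies all of these exactly because of the two conditions extracted from $\cP(g)\notin\fp_{n}$.
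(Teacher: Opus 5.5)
Your proof is correct, and it takes a more direct route than the paper's.

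The paper argues geometrically through a realization $\{U_i\}$. It reads a generator $\prod_{i\in\alpha}x_i\prod_{j\in\beta}y_j$ of $J_{n-1}$ as the receptive-field relation $\bigcap_{i\in\alpha}U_i\subseteq\bigcup_{j\in\beta}U_j$ and intersects with $U_n$. It then shrinks $\beta$ to a minimal $\beta'$ so that every $U_j$ with $j\in\beta'$ meets $U_n$, which forces $\beta'\subseteq\tau\backslash\sigma$. Finally it uses the fact that all combinations of $\tau\backslash\sigma$ occur inside $U_n$ (condition (ii) of Definition~\ref{def:kpiercing}) to assert a point violating the relation.

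You instead use condition (i) and work only with codewords. Your witness $\gamma=\sigma\cup A\in\CC\backslash n$ is the combinatorial counterpart of the paper's point, which can be taken in the region of the codeword $\gamma\cup\{n\}\in\CC$. Evaluating $g$ at $\gamma$ gives the contradiction immediately. Your approach has several advantages:
\begin{enumerate}
\item No realization has to be chosen.
\item The minimal-$\beta'$ reduction is unnecessary, since indices of $B$ outside $\tau$ are automatically absent from $\gamma$.
\item The witness is constructed explicitly rather than asserted.
\item It is clear that neither the quadratic hypothesis nor minimality of the canonical form is used. Indeed, your argument shows $\cP(f)\in\fp_n$ for every pseudo-monomial $f$ in the neural ideal of $\CC\backslash n$.
\end{enumerate}
The paper's version, in exchange, conveys the geometric picture the authors emphasize throughout: $U_n$ sits inside $U_\sigma$ and misses every field indexed outside $\tau$.
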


\begin{proof}
Suppose $m=\prod_{i \in \alpha} x_i \prod_{j \in \beta} y_j$ is a generator of $J_{n-1}$ with no factor in $\fp_{n}$, i.e. for all $\ell \in \alpha \cup \beta$, $U_{\ell} \cap U_{n} \ne \emptyset$ and $U_{n} \subsetneq U_{\ell}$. We will show that this is impossible.
Note that $\alpha \cap \beta=\emptyset$.

For all $i \in \alpha$, we have $U_i \cap U_{n} \ne \emptyset$ and thus $\alpha \subseteq \tau$ in the sense of Definition~\ref{def:kpiercing}. Hence $\left(\bigcap_{i \in \alpha} U_i\right) \cap U_{n} \ne \emptyset.$ Since $U_{n} \subsetneq U_j$ for any $j \in \beta$, $j \not\in \sigma$ in the sense of Definition~\ref{def:kpiercing}. 
Since $\bigcap_{i \in \alpha} U_i \subseteq \bigcup_{j \in \beta} U_j$, we must have
\begin{equation}
\label{eq:containment}
    \left(\bigcap_{i \in \alpha} U_i\right) \cap U_{n} \subseteq \bigcup_{j \in \beta} U_j.
\end{equation}
Choose $\beta' \subseteq \beta$ minimal such that the above relation still holds with $\beta$ replaced by $\beta'$. Then $U_{n} \cap U_j \ne \emptyset$ for every $j \in \beta'$. Hence $\beta' \subseteq \tau \backslash \sigma$. Then we must have a point contained in $\bigcap_{i \in \alpha} U_i \cap U_{n}$ but not contained in $U_j$ for any $j \in \beta'$, which is a contradiction of Equation \ref{eq:containment}.
\end{proof}

\begin{lemma}\label{lem:SES}
    Under the conditions of Proposition \ref{prop:inP}, the sequence
    \[
    \xymatrix{
    0 \ar[r] & \frac{S}{\fp_n} \ar[r]^-{\cdot x_n} & \frac{S}{J_n} \ar[r] & \frac{S}{J_{n - 1} + (x_n)} \ar[r] & 0
    }
    \]
    is exact.
\end{lemma}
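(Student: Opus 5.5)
This is the standard short exact sequence $0\to S/(J:x_n)\xrightarrow{\cdot x_n} S/J\to S/(J+(x_n))\to 0$, valid for any ideal $J$ and element $x_n$. The kernel of multiplication by $x_n$ on $S/J$ is $(J:x_n)/J$, so the map $S/(J_n:x_n)\to S/J_n$ is injective. The cokernel is $S/(J_n+(x_n))$. Therefore the lemma reduces to two ideal identities:
\[
J_n + (x_n) = J_{n-1} + (x_n) \quad\text{and}\quad J_n : x_n = \fp_n .
\]

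The first identity is immediate from Equation~\eqref{eqn:canonIdeal}. Since $J_n = J_{n-1} + (x_n)\fp_n$, the extra summand $(x_n)\fp_n$ is already contained in $(x_n)$, so it is absorbed.

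For the second identity, I will compute colons of monomial ideals termwise:
\[
J_n : x_n = (J_{n-1}:x_n) + (x_n\fp_n : x_n).
\]
This is legitimate because both summands are monomial ideals and $x_n$ is a monomial, so the colon is generated by $m/\gcd(m,x_n)$ over the generators $m$. Now $x_n\fp_n : x_n = \fp_n$. The generators of $J_{n-1}$ involve only the variables indexed by $[n-1]$, as it is expanded from the smaller ring, so $x_n$ does not divide any of them and $J_{n-1}:x_n = J_{n-1}$. This gives $J_n:x_n = J_{n-1}+\fp_n$.

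Finally, Proposition~\ref{prop:inP} gives $J_{n-1}\subseteq\fp_n$, so $J_n:x_n=\fp_n$. Substituting both identities into the standard sequence yields the stated exact sequence, with the first map being multiplication by $x_n$.

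The only step needing care is the colon computation: checking the monomial-ideal colon formula and that no generator of $J_{n-1}$ involves $x_n$. The substantive input, $J_{n-1}\subseteq\fp_n$, is already supplied by Proposition~\ref{prop:inP}.
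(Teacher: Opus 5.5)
Your proof is correct and follows the paper's argument essentially step by step. You reduce to the standard sequence $0\to S/(J_n:x_n)\to S/J_n\to S/(J_n+(x_n))\to 0$, compute $J_n:x_n=J_{n-1}+\fp_n=\fp_n$ via Proposition~\ref{prop:inP}, and absorb $x_n\fp_n$ into $(x_n)$. Your remark that the colon distributes over the sum because both summands are monomial ideals fills in a step the paper leaves implicit.
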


\begin{proof}
    To start, we claim that $\fp_n = (J_n :_S x_n)$. Since $x_n$ is regular on $S/J_{n-1}$, we have $(J_{n-1}:_S x_n) = J_{n-1}$. Combined with Remark \ref{rem:FiberIdeal}, we have \[(J_n :_S x_n) = (J_{n-1} :_S x_n) + (x_n \fp_n :_S x_n) = J_{n-1} + \fp_n = \fp_n\] where the last equality follows from $J_{n-1} \subset \fp_n$, i.e., Proposition \ref{prop:inP}.

    Next, by Proposition \ref{prop:inP}, we have \[J_n + (x_n) = J_{n-1} + x_n \fp_n + (x_n) = J_{n-1} + (x_n).\] Substituting both $\fp_n = (J_n :_S x_n)$ and $J_n + (x_n) = J_{n-1} + (x_n)$ into the short exact sequence \[
    \xymatrix{
    0 \ar[r] & \frac{S}{(J_n :_S x_n)} \ar[r]^-{\cdot x_n} & \frac{S}{J_n} \ar[r] & \frac{S}{J_n + (x_n)} \ar[r] & 0
    }
    \] provides the desired result.
\end{proof}

\begin{notation}
    If $n$ is a $k$-piercing of $C$, then we denote the minimal free resolutions of $S/J_{n-1}$ and $S/J_{n}$ by $F^{(n-1)}$ and $F^{(n)}$, respectively.
\end{notation}

\begin{lemma}\label{lem:tensorXn}
    Let $K^S(x_n)$ denote the Koszul complex on $x_n$. If $G^{(n)}$ is the minimal free resolution of $S/(J_{n-1} + (x_n))$ over $S$, then $G^{(n)} \cong F^{(n-1)} \otimes_S K^S(x_n)$. Consequently, 
    \begin{align*}
        G_0^{(n)} &= S[(0,0)], \\
        G_1^{(n)} &= S[-(1,0)] \oplus \left( \bigoplus_{u + v = 2} S[-(u,v)]^{\betti[S]{1,u,v}{S/J_{n-1}}} \right), \text{ and, for } w > 1, \\
        G_w^{(n)} &= \bigoplus_{u + v = w + 1} S[-(u,v)]^{\betti[S]{w,u,v}{S/J_{n-1}} + \betti[S]{w-1,u-1,v}{S/J_{n-1}}}.
    \end{align*}
\end{lemma}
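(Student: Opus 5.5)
The argument rests on one observation: $x_n$ does not appear in any generator of $J_{n-1}$. By Notation~\ref{not:piercingideal}, $J_{n-1}$ is the polarized neural ideal of $\CC\backslash\{n\}$, extended from $\F_2[x_1,\ldots,x_{n-1},y_1,\ldots,y_{n-1}]$ to $S$. Hence $x_n$ is a nonzerodivisor on $S/J_{n-1}$; this fact is also used in the proof of Lemma~\ref{lem:SES}. The plan is to use this to show that tensoring with the Koszul complex resolves $S/(J_{n-1}+(x_n))$, and then to read off the graded pieces.

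\textbf{Step 1: acyclicity.} Write $K^S(x_n)$ as $0\to S[-(1,0)]\xrightarrow{x_n} S\to 0$. The homology of $F^{(n-1)}\otimes_S K^S(x_n)$ is $\operatorname{Tor}^S_i(S/J_{n-1}, S/(x_n))$. Compute it by resolving the second factor with $K^S(x_n)$ instead. Since $x_n$ is regular on $S/J_{n-1}$, the complex $0\to S/J_{n-1}\xrightarrow{x_n} S/J_{n-1}\to 0$ has homology only in degree $0$, equal to $S/(J_{n-1}+(x_n))$. Equivalently, one may cite the standard fact, as in the setting of Corollary~\ref{cor:freeresregseq}, that tensoring a resolution with the Koszul complex on a regular element resolves the quotient. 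Either way, $F^{(n-1)}\otimes_S K^S(x_n)$ is a free resolution of $S/(J_{n-1}+(x_n))$.

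\textbf{Step 2: minimality.} The differentials of the tensor product are built from those of $F^{(n-1)}$, which have entries in $(x_1,\ldots,x_{n-1},y_1,\ldots,y_{n-1})$ by minimality, and from multiplication by $\pm x_n$. Every entry therefore lies in the homogeneous maximal ideal, so the resolution is minimal, and by uniqueness of minimal resolutions it is isomorphic to $G^{(n)}$. This step also preserves the multigrading, because $x_n$ is multihomogeneous of degree $(1,0)$.

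\textbf{Step 3: bookkeeping.} By construction, $G^{(n)}_w = F^{(n-1)}_w \oplus F^{(n-1)}_{w-1}[-(1,0)]$. Now substitute the description of $F^{(n-1)}$ from Remark~\ref{rem:wuvTwist}; this applies because $\CC\backslash\{n\}$ is inductively pierced, so its resolution is linear by Theorem~\ref{thm:reg2indpierced}.
\begin{itemize}
\item For $w=0$ we get $S[(0,0)]$.
\item For $w=1$, the summand $F^{(n-1)}_0[-(1,0)]=S[-(1,0)]$ appears next to $F^{(n-1)}_1=\bigoplus_{u+v=2}S[-(u,v)]^{\beta_{1,u,v}}$.
\item For $w>1$, a summand $S[-(u',v)]$ of $F^{(n-1)}_{w-1}$ with $u'+v=w$ becomes $S[-(u'+1,v)]$. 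Reindexing with $u=u'+1$ gives exponent $\beta_{w-1,u-1,v}$ with $u+v=w+1$, which combines with $F^{(n-1)}_w$ to give the stated formula.
\end{itemize}

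The only real subtlety is Step 1, namely justifying that the tensor product is acyclic. That reduces entirely to $x_n$ being regular on $S/J_{n-1}$, which is immediate since $J_{n-1}$ is a monomial ideal whose generators do not involve $x_n$. Everything else is routine index bookkeeping. One should also note, in the $w=1$ case, that the $F^{(n-1)}_0$ contribution is the single shift $(1,0)$. This is consistent with the general formula if one sets $\beta_{0,0,0}=1$.
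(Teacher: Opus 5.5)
Your proposal is correct and follows the same route as the paper. Regularity of $x_n$ on $S/J_{n-1}$ gives $\operatorname{Tor}_i^S(S/J_{n-1},S/(x_n))=0$ for $i>0$, so the tensor product of minimal resolutions $F^{(n-1)}\otimes_S K^S(x_n)$ minimally resolves $S/(J_{n-1}+(x_n))$ (the paper cites Theorem~\ref{thm:CommRing} and Corollary~\ref{cor:tensorRes}), and the stated summands are then read off from $G^{(n)}_w=F^{(n-1)}_w\oplus F^{(n-1)}_{w-1}[-(1,0)]$ using linearity; you just spell out the Tor computation, minimality check, and reindexing that the paper leaves implicit (minimality alone only puts the entries of $\partial^{F^{(n-1)}}$ in the homogeneous maximal ideal of $S$, which is all you need).
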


\begin{proof}
    Since $x_n$ is regular over $S/J_{n-1}$, by Theorem \ref{thm:CommRing} and Corollary \ref{cor:tensorRes}
    \[\frac{S}{J_{n-1} + (x_n)} \cong \frac{S}{J_{n-1}} \otimes_S \frac{S}{(x_n)}\] is minimally resolved by $F^{(n-1)} \otimes_S K^S(x_n)$.
    That is, \[G^{(n)} \cong F^{(n-1)} \otimes_S K^S(x_n).\] 
    The claim now follows by analyzing the graded components of the above complex.
\end{proof}

\begin{lemma}\label{lem:fpnMultiGrade}
    Under the conditions of Proposition \ref{prop:inP}, if $n$ contained in $\ell$ of the other place fields, then \[\betti[S]{w,u,v}{S/\fp_n} = \begin{cases} \tbinom{n-1-k-\ell}{u} \tbinom{\ell}{v} & w = u + v \\ 0 & w \neq u + v. \end{cases}\]
\end{lemma}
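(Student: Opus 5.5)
The plan is to compute the multigraded Betti numbers of $S/\fp_n$ directly from its Koszul resolution (Remark~\ref{rem:FiberIdeal}). Since $\fp_n$ is generated by a set of distinct variables, these generators form a regular sequence, so the Koszul complex $K^S(\fp_n)$ is a minimal free resolution of $S/\fp_n$.

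The first step is to count the generators of each type. By Definition~\ref{def:piercingideal}, the $x$-generators are the $x_i$ with $i\in[n-1]\backslash\tau$, and the $y$-generators are the $y_j$ with $j\in\sigma$.
\begin{itemize}
\item \emph{The $y$-generators.} Following Remark~\ref{rem:pierceIdeal}, $j\in\sigma$ exactly when $U_n\subseteq U_j$. So the number of $y$-generators is $|\sigma|$, and this equals $\ell$ by hypothesis.
\item \emph{The $x$-generators.} Their number is $(n-1)-|\tau|$. The interval $[\sigma,\tau]$ has rank $k$, so $|\tau|=|\sigma|+k=\ell+k$. Hence there are $n-1-k-\ell$ of them.
\end{itemize}
There is one point to check: the hypothesis that $n$ lies in $\ell$ place fields should be read as $|\sigma|=\ell$. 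Neurons in $\tau\backslash\sigma$ only partially overlap $U_n$, so they do not contain it. This is consistent with Remark~\ref{rem:pierceIdeal}.

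The second step reads off the Betti numbers. The Koszul complex on variables $z_1,\dots,z_m$ has $K_w=\bigoplus_{|A|=w} S[-\deg z_A]$, where $z_A=\prod_{a\in A}z_a$. In the $(x,y)$-bigrading, a subset $A$ made of $u$ of the $x$-generators and $v$ of the $y$-generators contributes a summand $S[-(u,v)]$ in homological degree $w=u+v$. The number of such subsets is $\binom{n-1-k-\ell}{u}\binom{\ell}{v}$.

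Because the differentials of the Koszul complex on variables have entries that are variables or zero, the complex is minimal. So these counts are precisely $\betti[S]{w,u,v}{S/\fp_n}$, and the count is zero whenever $w\neq u+v$. Equivalently, the resolution is the tensor product of the Koszul complexes on the $x$-part and the $y$-part, whose multigraded Poincaré series factor as $(1+s)^{n-1-k-\ell}(1+t)^{\ell}$.

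The only potential obstacle is the bookkeeping: identifying $\ell$ with $|\sigma|$ and using the rank condition to obtain $|\tau|=k+\ell$. The rest is the standard structure of the Koszul complex.
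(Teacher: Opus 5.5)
Your proposal is correct and follows the paper's proof essentially verbatim. You identify $\fp_n$ as generated by $n-1-k-\ell$ distinct $x$-variables and $\ell$ distinct $y$-variables, using $|\sigma|=\ell$ and $|\tau|=|\sigma|+k$, and then read off the bigraded Betti numbers from the minimal Koszul resolution by counting choices of $u$ of the $x$'s and $v$ of the $y$'s. Your explicit count via the rank of $[\sigma,\tau]$ just spells out the paper's remark that $U_n$ is disjoint from $n-1-k-\ell$ of the other place fields.
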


\begin{proof}
    Since $n$ is a $k$-piercing contained in $\ell$ other place fields, it must be disjoint from $n - 1 - k - \ell$ place fields. Thus, $\fp_n = (\underline{x}, \underline{y})$ where $\underline{x}$ is a list of $n - 1 - k - \ell$ distinct $x_i$'s and $\underline{y}$ is a list of $\ell$ distinct $y_j$'s. It follows that $S / \fp_n$ is resolved by the Koszul complex, $K^S(\fp_n)$. As such, any basis element of the resolution with degree $-(u,v)$ must correspond to $u$ of $n - 1 - k - \ell$ of the $x$'s and $v$ of $\ell$ of the $y$'s. This gives our result.
\end{proof}

\begin{rem}\label{rem:adjustedSES}
    In the short exact sequence given by Lemma \ref{lem:SES}, the map $S/\fp_n \to S / J_n$ is given by multiplication by $x_n$. This map has degree $(1,0)$ in our multigrading, so to make the map homogeneous, we replace $S / \fp_n$ with $(S / \fp_n)[-(1,0)]$ to get the short exact sequence
    \[
    \xymatrix{
    0 \ar[r] & \frac{S}{\fp_n}[-(1,0)] \ar[r]^-{\cdot x_n} & \frac{S}{J_n} \ar[r] & \frac{S}{J_{n - 1} + (x_n)} \ar[r] & 0.
    }
    \]
    Since $S / \fp_n$ is minimally resolved by $K^S(\fp_n)$, we have $(S / \fp_n)[-(1,0)]$ is minimally resolved by $K^S(\fp_n) \otimes_S S[-(1,0)]$. 
    Using Lemma \ref{lem:fpnMultiGrade}, the minimal free resolution in homological degree $w$ is given by
    \[\bigoplus_{u + v = w} S[-(u,v)]^{\tbinom{n - 1 - k - \ell}{u} \tbinom{\ell}{v}} \otimes_S S[-(1,0)] \cong \bigoplus_{u + v = w} S[-(u+1,v)]^{\tbinom{n - 1 - k - \ell}{u} \tbinom{\ell}{v}}.\] By reindexing the $u$-component, we get the free $S$-module \[\bigoplus_{u + v = w + 1} S[-(u,v)]^{\tbinom{n - 1 - k - \ell}{u - 1} \tbinom{\ell}{v}}\] with multigraded Betti numbers given by 
    \[\betti[S]{w,u,v}{\frac{S}{\fp_n}[-(1,0)]} = \begin{cases} \tbinom{n-1-k-\ell}{u - 1} \tbinom{\ell}{v} & w + 1 = u + v \\ 0 & w + 1 \neq u + v. \end{cases}\]
\end{rem}

In the proof of the following theorem, we consider a long exact sequence in Tor associated to the short exact sequence from Remark \ref{rem:adjustedSES}. We note that since all the modules in the sequence are graded, the long exact sequence in Tor respects the same grading.

\begin{thm}\label{thm:inductMultiBetti}
    Consider the setting of Notation \ref{not:piercingideal}. If $\CC$ is an inductively pierced code such that $n$ is a $k$-piercing of $\CC \backslash \{n\}$ contained in $\ell$ other place fields, then the multigraded Betti numbers of $S/ J_n$ and $S / J_{n-1}$ over $S$ satisfy the recursive formulas
    \begin{align*}
    \betti{0,u,v}{\frac{S}{J_{n}}} &= \begin{cases} 1 & u = v = 0 \\ 0 & \text{otherwise} \end{cases} \\
    \betti{1,u,v}{\frac{S}{J_{n}}} &= \begin{cases} \tbinom{n - 1 - k - \ell}{u - 1} \tbinom{\ell}{v} + \betti{1,u,v}{\frac{S}{J_{n-1}}} & u+v = 2 \text{ and } u > 0 \\
     0 & \text{otherwise} \end{cases} \\
    \betti{w,u,v}{\frac{S}{J_{n}}} &= \begin{cases} \tbinom{n - 1 - k - \ell}{u - 1} \tbinom{\ell}{v} + \betti{w-1,u-1,v}{\frac{S}{J_{n-1}}} + \betti{w,u,v}{\frac{S}{J_{n-1}}} & \begin{matrix} u + v = w + 1 \\ \text{ and } u > 0 \end{matrix} \\ 0 & \text{otherwise} \end{cases}
    \end{align*}
\end{thm}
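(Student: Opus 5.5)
We would prove this by applying $\operatorname{Tor}^S_\bullet(-,\F_2)$ to the multigraded short exact sequence of Remark~\ref{rem:adjustedSES},
\[
0 \to A:=\tfrac{S}{\fp_n}[-(1,0)] \to B:=\tfrac{S}{J_n} \to C:=\tfrac{S}{J_{n-1}+(x_n)} \to 0 .
\]
We then read off the Betti numbers multidegree by multidegree, using $\betti{w,u,v}{M}=\dim_{\F_2}\operatorname{Tor}^S_w(M,\F_2)_{(u,v)}$. The input data are already available. By Remark~\ref{rem:adjustedSES}, $\operatorname{Tor}_w(A)$ is concentrated in multidegrees with $u+v=w+1$, where it has dimension $\binom{n-1-k-\ell}{u-1}\binom{\ell}{v}$. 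By Lemma~\ref{lem:tensorXn}, applied to the linear resolution of $S/J_{n-1}$ (Remark~\ref{rem:wuvTwist} and Theorem~\ref{thm:reg2indpierced}, since $\CC\backslash\{n\}$ is again inductively pierced), $\operatorname{Tor}_w(C)$ is also concentrated in total degree $w+1$ for $w\ge1$. Its dimension there is $\betti{w,u,v}{S/J_{n-1}}+\betti{w-1,u-1,v}{S/J_{n-1}}$, except for the single extra summand $S[-(1,0)]$ in homological degree $1$ coming from $x_n$.

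The key step is a degree argument showing that the connecting homomorphisms $\delta:\operatorname{Tor}_{w+1}(C)\to\operatorname{Tor}_w(A)$ vanish. The long exact sequence is multigraded, so $\delta$ preserves multidegree. For $w\ge1$, the source lives in total degree $w+2$ and the target in total degree $w+1$, so $\delta=0$. In those homological degrees the long exact sequence therefore splits into short exact sequences
\[
0\to\operatorname{Tor}_w(A)_{(u,v)}\to\operatorname{Tor}_w(B)_{(u,v)}\to\operatorname{Tor}_w(C)_{(u,v)}\to 0 ,
\]
and the Betti numbers add. This gives the stated formula for $w\ge2$ directly, and gives $w=1$ away from multidegree $(1,0)$. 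The case $w=0$ is trivial, since $S/J_n$ is cyclic.

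The one delicate spot, which we expect to be the main obstacle, is low homological degree in multidegree $(1,0)$. Here $\operatorname{Tor}_1(C)_{(1,0)}\cong\F_2$ (from the generator $x_n$) and $\operatorname{Tor}_0(A)_{(1,0)}\cong\F_2$, so the degree argument does not kill $\delta:\operatorname{Tor}_1(C)\to\operatorname{Tor}_0(A)$. We would handle this with the exact segment
\[
\operatorname{Tor}_1(A)_{(1,0)}\to\operatorname{Tor}_1(B)_{(1,0)}\to\operatorname{Tor}_1(C)_{(1,0)}\xrightarrow{\delta}\operatorname{Tor}_0(A)_{(1,0)}\to\operatorname{Tor}_0(B)_{(1,0)} .
\]
The first term vanishes because $\operatorname{Tor}_1(A)$ sits in total degree $2$. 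The last term vanishes because $\operatorname{Tor}_0(B)=\F_2$ sits in degree $(0,0)$. Hence $\delta$ is an isomorphism and $\betti{1,1,0}{S/J_n}=0$, so the $x_n$ summand cancels rather than contributing. This matches the fact that $x_n\notin J_n$, and it is exactly why the $w=1$ formula has no extra term.

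Finally, we would check the constraint $u>0$. The $A$-term has $u\ge1$ by construction, and the term $\betti{w-1,u-1,v}{S/J_{n-1}}$ forces $u\ge1$. For the term $\betti{w,u,v}{S/J_{n-1}}$, we would show by induction on $n$ that Betti numbers with $u=0$ vanish. Every generator of a polarized neural ideal of an inductively pierced code involves some $x_i$, so inductively the recursion itself only ever produces multidegrees with $u>0$. The base case is $J=0$, or a single piercing where $J_n=x_n\fp_n$.
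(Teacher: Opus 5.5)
Your proposal is correct and follows essentially the same route as the paper: the long exact sequence in $\operatorname{Tor}^S_\bullet(-,\F_2)$ for the sequence of Remark~\ref{rem:adjustedSES}, decomposed into multigraded pieces, where concentration on the diagonal $u+v=w+1$ forces short exact sequences and hence additivity of the Betti numbers. The differences are minor. In multidegree $(1,0)$ the paper appeals to the a priori linearity of the resolution of $S/J_n$ (Remark~\ref{rem:wuvTwist}), whereas you derive $\beta_{1,1,0}(S/J_n)=0$ directly from exactness by showing the connecting map is an isomorphism there. You also supply the induction for the vanishing at $u=0$, which the paper only asserts.
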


\begin{proof}
    We start by noting that the case of $w = 0$ is proven in Remark \ref{rem:wuvTwist}. To get the case of $w > 0$, we observe that if \[
    \xymatrix{
    0 \ar[r] & M \ar[r] & M' \ar[r] & M'' \ar[r] & 0 
    }
    \] is a short exact sequence of $S$-modules, then there is an associated long exact sequence in Tor given by \[
    \xymatrix{
     \cdots\!\ar[r] & \Tor[w]{S}{M}{N} \ar[r] & \Tor[w]{S}{M'}{N} \ar[r] & \Tor[w]{S}{M''}{N} \ar`r[rd]`[l]`^d[lll]`^r[dll][dll] & \\
    & \Tor[w-1]{S}{M}{N} \ar[r] & \Tor[w-1]{S}{M'}{N} \ar[r] & \Tor[w-1]{S}{M''}{N} \ar[r] & \cdots
    }
    \] for any $S$-module $N$.

    For our purposes, we set $M = \frac{S}{\fp_n}[-(1,0)]$, $M' = \frac{S}{J_n}$, $M'' = \frac{S}{J_{n-1} + (x_n)}$, and $N = \frac{S}{(x_1,\ldots,x_n,y_1,\ldots,y_n)} \cong \F_2$. In this case, we obtain
    \begin{align*}
        \Tor[w]{S}{\frac{S}{\fp_n}}{\F_2} &\cong \Tor[w]{S}{K^S(\underline{x}) \otimes_S K^S(\underline{y}) \otimes_S S[-(1,0)]}{\F_2} \\
         &= H_w\left( \left(K^S(\underline{x}) \otimes_S K^S(\underline{y}) \otimes_S S[-(1,0)]\right) \otimes_S \F_2 \right) \\
         &= \left(K^S(\underline{x}) \otimes_S K^S(\underline{y})_w \otimes_S S[-(1,0)]\right) \otimes_S \F_2 \\
         &= \bigoplus_{u + v = w + 1} S[-(u,v)]^{\tbinom{n - 1 - k - \ell}{u - 1} \tbinom{\ell}{v}} \otimes_S \F_2 \\
         &= \bigoplus_{u + v = w + 1} \F_2[-(u,v)]^{\tbinom{n - 1 - k - \ell}{u - 1} \tbinom{\ell}{v}}.
    \end{align*}
    Moreover, since Tor respects gradings, i.e., it is also graded, we can decompose the Tor module into its $(u,v)$-graded components. For $S / \fp_n$, we find \[\Tor[w]{S}{\frac{S}{\fp_n}}{\F_2}_{(u,v)} = \begin{cases} \F_2[-(u,v)]^{\tbinom{n - 1 - k - \ell}{u - 1} \tbinom{\ell}{v}} & u + v = w + 1 \\ 0 & u + v \neq w + 1 \end{cases}.\]

    Using the same argument in conjunction with Remark \ref{rem:wuvTwist} yields \[\Tor[0]{S}{\frac{S}{J_n}}{\F_2}_{(u,v)} = \begin{cases} \F_2[(0,0)] & u = v = 0 \\ 0 & \text{otherwise} \end{cases}\] and, for $w > 0$, \[\Tor[w]{S}{\frac{S}{J_n}}{\F_2}_{(u,v)} = \begin{cases} \F_2[-(u,v)]^{\betti[S]{w,u,v}{\frac{S}{J_n}}} & u + v = w + 1 \\ 0 & \text{otherwise} \end{cases}.\] Similarly argued, Lemma \ref{lem:tensorXn} lets us conclude that \[\Tor[0]{S}{\frac{S}{J_{n-1} + (x_n)}}{\F_2}_{(u,v)} = \begin{cases} S[(0,0)] & u = v = 0 \\ 0 & \text{otherwise} \end{cases},\] \[\Tor[1]{S}{\frac{S}{J_{n-1} + (x_n)}}{\F_2}_{(u,v)} = \begin{cases} \F_2[(-1,0)] & u = 1, v = 0 \\ \F_2[-(u,v)]^{\betti[S]{1,u,v}{S/J_{n-1}}} & u + v = 2 \\ 0 & \text{otherwise} \end{cases},\] and, for $w > 1$, \[\Tor[w]{S}{\frac{S}{J_{n-1} + (x_n)}}{\F_2}_{(u,v)} = \begin{cases} \F_2[-(u,v)]^{\betti[S]{w-1,u-1,v}{\frac{S}{J_{n-1}}} + \betti[S]{w,u,v}{\frac{S}{J_{n-1}}}} & u + v = w + 1 \\ 0 & \text{otherwise} \end{cases}.\]

    We compute these decompositions since any $S$-module homomorphism $\F_2[-(u,v)] \to \F_2[-(u',v')]$ is zero if either $u \neq u'$ or $v \neq v'$. This means the long exact sequence in Tor also decomposes into $(u,v)$ graded pieces. We proceed by considering various possibilities for $(u,v)$.

    Aside from the cases $(u,v)=(0,0)$ and $(u,v)=(1,0)$, we have all Betti numbers $\betti[S]{w,u,v}{\cdot} = 0$ for all three quotient rings, except possibly when $u + v = w + 1$. Thus, if $(u,v) \notin \{(0,0), (1,0)\}$, then the long exact sequence in Tor reduces to
    \[
    0 \to \Tor[w]{S}{\frac{S}{\fp_n}}{\F_2}_{(u,v)}\!\to\!\Tor[w]{S}{\frac{S}{J_n}}{\F_2}_{(u,v)}\!\to\!\Tor[w]{S}{\frac{S}{J_{n-1} + (x_n)}}{\F_2}_{(u,v)} \to 0
    \] which is short exact. Since the sequence is exact, the alternating sum of the ranks (as $\F_2$ vector spaces) must sum to 0. That is, the alternating sum of the corresponding multigraded Betti numbers must equal 0. In the case of $w = 1$, so when $u + v = 2$, we obtain
    \[0 = \tbinom{n - 1 - k - \ell}{u - 1} \tbinom{\ell}{v} - \betti[S]{1,u,v}{\frac{S}{J_n}} + \betti[S]{1,u,v}{\frac{S}{J_{n-1}}}\] which rearranges to the desired result with the observation that if $u \leq 0$, then \[\tbinom{n - 1 - k - \ell}{u - 1} = 0 = \betti[S]{1,u,v}{\frac{S}{J_{n-1}}}.\]

    The case of $w > 1$ yields the expression
    \[0 = \tbinom{n - 1 - k - \ell}{u - 1} \tbinom{\ell}{v} - \betti[S]{w,u,v}{\frac{S}{J_n}} + \betti[S]{w,u,v}{\frac{S}{J_{n-1}}} + \betti[S]{w - 1,u - 1,v}{\frac{S}{J_{n-1}}}.\] As in the previous case, if $u \leq 0$, then all terms vanish.

    To complete the proof, we note $(u,v) = (0,0)$ vanishes everywhere except for homological degree $w=0$, which was addressed at the beginning of the proof. The case of $(u,v) = (1,0)$ follows from Remark \ref{rem:wuvTwist} since a consequence of that lemma is that $\betti[S]{w,u,v}{S/J_n} = 0$ if $u + v = 1$.
\end{proof}

\begin{cor}\label{cor:gradedBetti}
    Under the same condition as Theorem \ref{thm:inductMultiBetti}, the graded Betti numbers of $S / J_n$ and $S / J_{n-1}$ over $S$ satisfy
    \begin{align*}
    \betti{0,i}{\frac{S}{J_{n}}} &= \begin{cases} 1 & i = 0 \\ 0 & \text{otherwise} \end{cases} \\
    \betti{1,i}{\frac{S}{J_{n}}} &= \begin{cases} \tbinom{n - 1 - k}{1} + \betti{1,2}{\frac{S}{J_{n-1}}} & i = 2 \\
     0 & \text{otherwise} \end{cases} \\
    \betti{w,i}{\frac{S}{J_{n}}} &= \begin{cases} \tbinom{n - 1 - k}{w} + \betti{w-1,i-1}{\frac{S}{J_{n-1}}} + \betti{w,i}{\frac{S}{J_{n-1}}} & j = w + 1 \\ 0 & \text{otherwise} \end{cases}
    \end{align*}
\end{cor}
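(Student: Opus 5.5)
The plan is to obtain Corollary~\ref{cor:gradedBetti} directly from Theorem~\ref{thm:inductMultiBetti} by coarsening the multigrading. Since $\deg x_i=(1,0)$ and $\deg y_i=(0,1)$ refine the standard grading, a minimal multigraded free resolution is also a minimal standard graded one. Hence for every $w$ and $i$
\[\betti{w,i}{S/J} = \sum_{u+v=i} \betti{w,u,v}{S/J},\]
which is the identity already recorded in Remark~\ref{rem:wuvTwist}. I will apply this to both $S/J_n$ and $S/J_{n-1}$, sum the recursion of Theorem~\ref{thm:inductMultiBetti} over all $(u,v)$ with $u+v=i$, and identify each resulting sum.

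The case $w=0$ is immediate. For $w\ge 1$, Theorem~\ref{thm:inductMultiBetti} and Remark~\ref{rem:wuvTwist} show that all Betti numbers vanish unless $i=w+1$, which gives the ``otherwise'' clauses; in the last display the condition ``$j=w+1$'' should read $i=w+1$. For $i=w+1$ I sum the three terms of the recursion separately.

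\begin{itemize}
\item The binomial term. Put $a=n-1-k-\ell$ and substitute $u'=u-1$. Then
\[\sum_{u+v=w+1}\tbinom{a}{u-1}\tbinom{\ell}{v}=\sum_{u'+v=w}\tbinom{a}{u'}\tbinom{\ell}{v}=\tbinom{n-1-k}{w}\]
by Vandermonde's identity. For $w=1$ this is $\tbinom{n-1-k}{1}$.
\item The shifted term $\betti{w-1,u-1,v}{S/J_{n-1}}$. Summed over $u+v=w+1$, it becomes $\sum_{u'+v=w}\betti{w-1,u',v}{S/J_{n-1}}=\betti{w-1,w}{S/J_{n-1}}=\betti{w-1,i-1}{S/J_{n-1}}$.
\item The unshifted term $\betti{w,u,v}{S/J_{n-1}}$. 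Summed over $u+v=w+1$, it gives $\betti{w,i}{S/J_{n-1}}$.
\end{itemize}

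The one point needing care, and the only real obstacle, is the restriction $u>0$ in Theorem~\ref{thm:inductMultiBetti}. This restriction omits the terms with $u=0$, so I must check that $\betti{w,0,w+1}{S/J_{n-1}}=0$ and $\betti{w-1,-1,v}{S/J_{n-1}}=0$. The second vanishes trivially because $u'=-1$ is not a valid degree; this is the same as the vanishing of $\tbinom{a}{-1}$ already used above.

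For the first, I argue via the lcm lattice (or the Taylor resolution, equivalently Hochster-type multidegree support). Every nonzero multigraded Betti number of a monomial ideal sits in a degree that is the lcm of some subset of its minimal generators. Every generator of a polarized neural ideal is divisible by some $x_i$, because $y_iy_j$ never occurs (Definition~\ref{def:grc} and Remark~\ref{rem:onlyoneperpair}). Hence every such lcm has $u\ge 1$, and all Betti numbers with $u=0$ and $w\ge 1$ vanish, for $S/J_{n-1}$ as well as for $S/J_n$.

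With that in hand, the sums over $u>0$ equal the full sums over $u+v=i$, and adding the three contributions gives exactly the displayed formulas. The case $w=1$ is identical, except that the shifted term is absent; this matches the $w=1$ formula of Theorem~\ref{thm:inductMultiBetti}.
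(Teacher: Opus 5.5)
Your proposal is correct and follows the paper's own argument. Like the paper, you coarsen the multigrading via $\beta_{w,i}=\sum_{u+v=i}\beta_{w,u,v}$, sum the recursion of Theorem~\ref{thm:inductMultiBetti} over $u+v=i$, and collapse the binomial term with Chu--Vandermonde. Your extra check that $\beta_{w,0,w+1}(S/J_{n-1})=0$ (every generator of a polarized neural ideal contains some $x_i$) makes explicit a step the paper leaves implicit when it drops the $u=0$ terms.
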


\begin{proof}
    This follows from the equation \[\betti[S]{w,i}{S/J_n} = \sum_{u + v = i} \betti[S]{w,u,v}{S/J_n}\] and the Chu-Vandermonde Identity for binomial coefficients.
\end{proof}

\begin{cor}\label{cor:regularBetti}
    Under the same condition as Theorem \ref{thm:inductMultiBetti}, the Betti numbers of $S / J_n$ and $S / J_{n-1}$ over $S$ satisfy
    \[\betti{w}{S/J_{n}} = \begin{cases}
        1 & w = 0 \\
        \betti{1}{S/J_{n-1}} + \tbinom{n - 1 - k}{1} & w = 1 \\
        \betti{w}{S/J_{n-1}} + \betti{w - 1}{S/J_{n-1}} + \tbinom{n - 1 - k}{w} & w>1.
    \end{cases}\]
\end{cor}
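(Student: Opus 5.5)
The plan is to obtain Corollary~\ref{cor:regularBetti} by summing the graded formulas of Corollary~\ref{cor:gradedBetti} over the internal degree $i$. The key input is Remark~\ref{rem:wuvTwist} together with Theorem~\ref{thm:reg2indpierced}. Both $J_n$ and $J_{n-1}$ are polarized neural ideals of inductively pierced codes; $\CC\backslash\{n\}$ is inductively pierced by definition. Hence both resolutions are linear, and for $w>0$ the total Betti number is concentrated in a single degree:
\[
\betti{w}{S/J_n}=\betti{w,w+1}{S/J_n},\qquad \betti{w}{S/J_{n-1}}=\betti{w,w+1}{S/J_{n-1}}.
\]
The case $w=0$ is immediate, since $\betti{0}{S/J_n}=\betti{0,0}{S/J_n}=1$.

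For $w=1$, I would read off $\betti{1,2}{S/J_n}=\binom{n-1-k}{1}+\betti{1,2}{S/J_{n-1}}$ from Corollary~\ref{cor:gradedBetti}. Linearity identifies $\betti{1,2}{S/J_{n-1}}$ with $\betti{1}{S/J_{n-1}}$, which gives the stated formula.

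For $w>1$, I would take $i=w+1$ in the third line of Corollary~\ref{cor:gradedBetti}; the condition written there as ``$j=w+1$'' should read $i=w+1$. This gives
\[
\betti{w}{S/J_n}=\binom{n-1-k}{w}+\betti{w-1,w}{S/J_{n-1}}+\betti{w,w+1}{S/J_{n-1}}.
\]
Applying linearity of the resolution of $S/J_{n-1}$ in homological degrees $w-1\ge 1$ and $w$, the two Betti terms become $\betti{w-1}{S/J_{n-1}}$ and $\betti{w}{S/J_{n-1}}$. Here the shift in internal degree by one matches exactly, since $(w-1)+1=w$.

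The only point needing care is bookkeeping. I must check that the summation $\sum_{u+v=w+1}$ used in Corollary~\ref{cor:gradedBetti} really collapses the multigraded binomial $\binom{n-1-k-\ell}{u-1}\binom{\ell}{v}$ to $\binom{n-1-k}{w}$. Substituting $u'=u-1$ gives $\sum_{u'+v=w}\binom{n-1-k-\ell}{u'}\binom{\ell}{v}$, which equals $\binom{n-1-k}{w}$ by Chu--Vandermonde. For $w=1$ this is $\binom{n-1-k}{1}$, consistent with the statement. There is no genuine obstacle beyond confirming that $J_{n-1}$ also has regularity $2$, which follows from Theorem~\ref{thm:reg2indpierced} applied to the inductively pierced code $\CC\backslash\{n\}$.
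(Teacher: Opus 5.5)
Your proposal is correct and follows the paper's own proof, which simply combines Corollary~\ref{cor:gradedBetti} with the linearity of the resolution that comes from $J_n$ being quadratic with $\reg{J_n}=2$. You also state explicitly that $\reg{J_{n-1}}=2$ is needed so that $\beta_{w-1,w}$ and $\beta_{w,w+1}$ of $S/J_{n-1}$ collapse to total Betti numbers, and you correct the typo $j=w+1$ to $i=w+1$; the paper leaves both of these points implicit.
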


\begin{proof}
    This is an immediate consequence of Corollary \ref{cor:gradedBetti} combined with the fact $J_n$ is generated by quadratics and $\reg{J_n} = 2$.
\end{proof}

\section{Main Betti numbers results}
\label{sec:mainresults}

In this section we leverage the results of Section~\ref{sec:fiberproduct} to give formulas for the Poincar\'{e} series, Betti numbers, projective dimensions, and regularity of polarized neural ideals of inductively pierced codes.

\begin{notation}
    From now on, we will compute Betti numbers over the ring $S=\F_2[x_1,\ldots,x_n,y_1,\ldots,y_n]$ unless otherwise indicated.
\end{notation}

We provide an example motivating our study of graded and multi-graded Betti numbers.

\begin{example}\label{ex:multigraded}
    While non-graded Betti numbers can often distinguish ideals from each other, they are insufficient to even detect whether a code is inductively pierced. Consider the \pci $J_1=(x_1x_3,x_2x_4)$. This has the same Betti numbers as $J_2=(x_1x_4,x_3x_4)$, but only the latter is inductively pierced (see Figure \ref{fig:rings}). Here the two neural codes can be distinguished by their graded Betti numbers, or by their regularity (3 for the first ideal, 2 for the second). This incentivizes us to consider graded Betti numbers rather than only non-graded Betti numbers in order to detect inductively pierced codes.

    In this paper we go further and consider multi-graded Betti numbers under the multi-grading where $\deg(x_i)=(1,0)$ and $\deg(y_i)=(0,1)$. This allows us to differentiate between firing fields that do not intersect and firing fields where one is contained in the other. For example, $J_3=(x_1x_4,x_4y_3)$ has the same graded Betti numbers as $J_2$ above. However, for $J_2$, $U_3 \cap U_4=\emptyset$, whereas for $J_3$, $U_4 \subseteq U_3$ (see Figure \ref{fig:rings}). Indeed, their multigraded Betti numbers are distinct.
\end{example}

\begin{figure}
\includegraphics[width=2in]{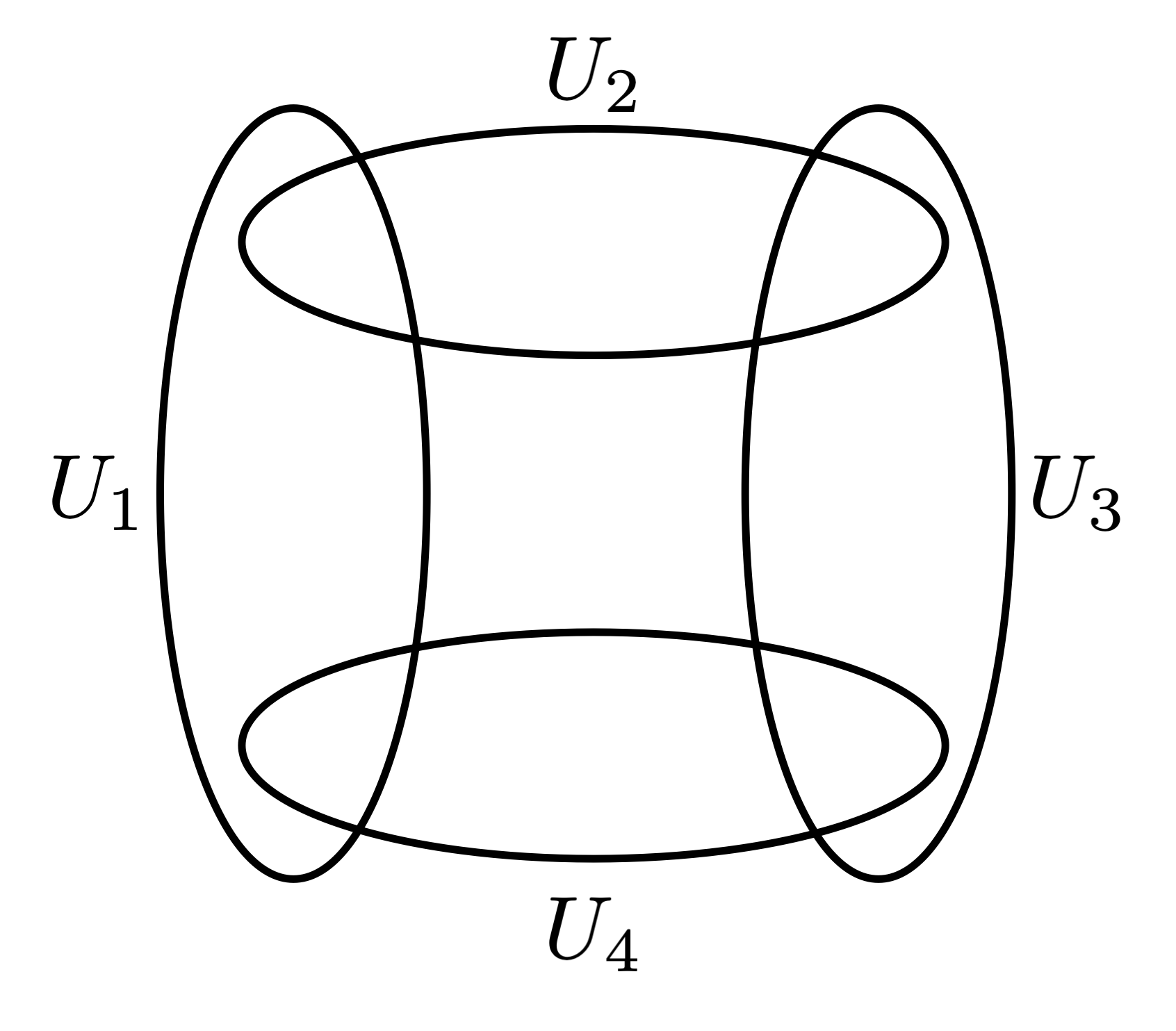} \\
\includegraphics[width=2in]{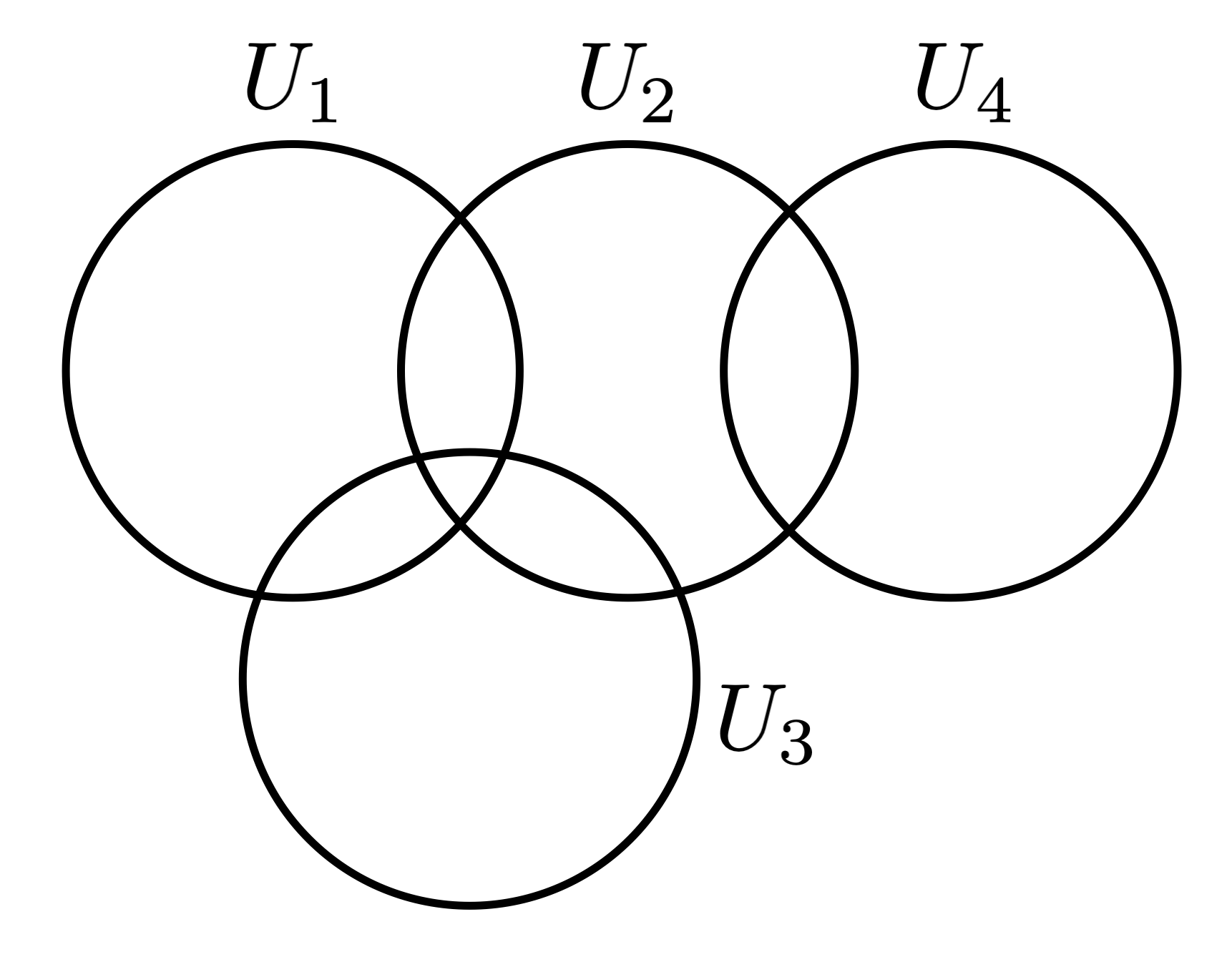}\hspace{0.5in} \includegraphics[width=1.6in]{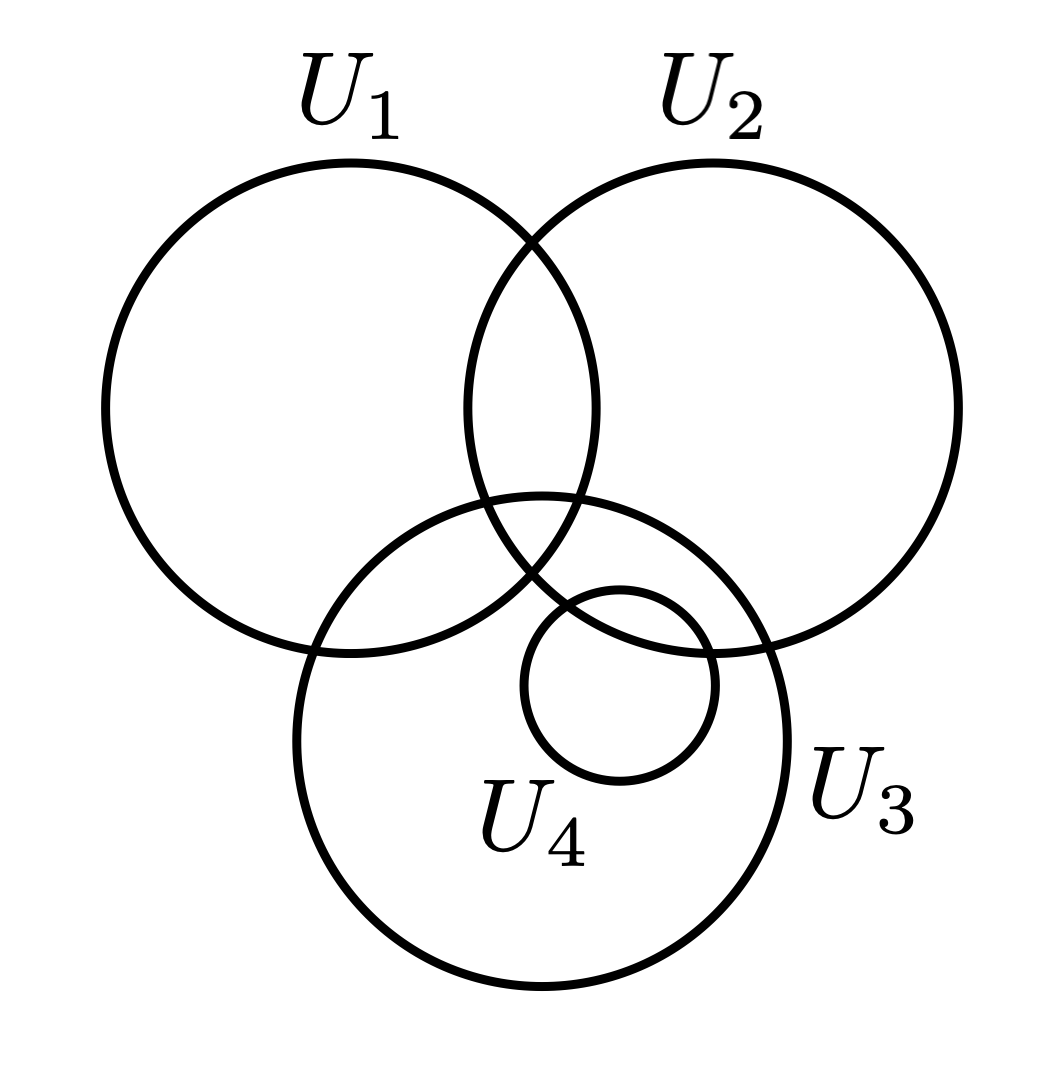}
\caption{Realizations of the codes associated to $J_1$ (top), $J_2$ (lower left), $J_3$ (lower right) from Example \ref{ex:multigraded}.}
\label{fig:rings}
\end{figure}

\begin{defn}\label{def:j_k}
    Let $C$ be an inductively pierced neural code on $n$ neurons. Given a piercing order of $C$, we define $j_k$ to be the number of $k$-piercings in the piercing order, i.e. with $\text{rank}[\sigma,\tau]=k$.
    
    We further define $j_{k,v}$ to be the number of $k$-piercings contained in $v$ other place fields in the piercing order, i.e. with $|\sigma|=v$ and $\text{rank}[\sigma,\tau]=k$ in the notation of Definition~\ref{def:kpiercing}.
\end{defn}

\begin{lemma}
\label{lem:piercingsum}
    Let $C$ be an inductively pierced neural code on $n$ neurons. For $0 \leq k < n$, we have \[j_k = \sum_{v=0}^{n - 1} j_{k,v} = \sum_{v=0}^{n - k - 1} j_{k,v}.\]
    In particular, a $k$-piercing can be contained in at most $n-k-1$ place fields.
    Moreover, we have \[n = \sum_{k = 0}^{n - 1} j_k.\]
\end{lemma}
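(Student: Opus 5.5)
The whole argument is a counting argument over a fixed piercing order $i_1<\cdots<i_n$ of $C$. By Definition~\ref{def:j_k}, every neuron $i_m$ in this order has a well-defined interval $[\sigma,\tau]$ with $\sigma\subseteq\tau\subseteq\{i_1,\ldots,i_{m-1}\}$. It therefore has exactly one rank $k=|\tau|-|\sigma|$ and exactly one value $v=|\sigma|$. The numbers $j_{k,v}$ thus partition the $j_k$ $k$-piercings according to $v$. Since $|\sigma|\le n-1$ always, we get $j_k=\sum_{v=0}^{n-1}j_{k,v}$. Likewise, each of the $n$ neurons is a $k$-piercing for exactly one $k$, and that $k$ satisfies $0\le k\le m-1\le n-1$. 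Summing over $k$ then gives $n=\sum_{k=0}^{n-1}j_k$.

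The main point is the bound on $v$, namely that $j_{k,v}=0$ when $v>n-k-1$. Take a $k$-piercing $i_m$ with interval $[\sigma,\tau]$. Then $\sigma$ and $\tau\setminus\sigma$ are disjoint subsets of $\{i_1,\ldots,i_{m-1}\}$, and $|\tau\setminus\sigma|=k$. This gives
\[
|\sigma|+k=|\tau|\le m-1\le n-1,
\]
so $v=|\sigma|\le n-k-1$. The terms with $v>n-k-1$ therefore vanish, which yields the second equality.

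The ``in particular'' clause says a $k$-piercing lies in at most $n-k-1$ place fields. To connect this to $|\sigma|$, I will use Remark~\ref{rem:pierceIdeal}: within the code at the stage it is added, $i_m$ is contained in $U_j$ exactly for $j\in\sigma$, since these are the $y_jx_{i_m}$ generators. The only subtlety is whether ``contained in $v$ other place fields'' should count neurons added later. Definition~\ref{def:j_k} defines it through $|\sigma|$, so I will work with that reading. The obstacle is only this interpretive check; the counting itself is immediate.
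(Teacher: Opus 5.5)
Your proof is correct and uses the same counting argument as the paper. Each neuron in the piercing order contributes to exactly one $j_{k,v}$. The bound $v\le n-k-1$ holds because the neuron itself, the $k$ neurons of $\tau\setminus\sigma$, and the $v$ neurons of $\sigma$ are distinct: this is the paper's $1+k+v\le n$ and your $|\sigma|+|\tau\setminus\sigma|\le m-1\le n-1$.
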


\begin{proof}
    Since we have $n$ neurons, the first equality follows from that fact that any given place field is contained in no more than $n - 1$ other place fields.

    To get the second equality, consider $k$ and $v$ such that $j_{k,v} \neq 0$. This means there exists at least one $U_i$ creating a $k$-piercing contained in $v$ other placefields. For this to be possible, we must have \[1 + k + v \leq n.\] As a result, if $v > n - 1 - k$, then $j_{k,v} = 0$, which then gives the second equality.

    Lastly, the final summation follows from the fact that every neuron is a piercing and the largest possible piercing with $n$ neurons is an $n-1$-piercing.
\end{proof}

\begin{example}
    Continuing Example~\ref{ex:prcgorder}, we have:
    \[ j_{0,0} = 1 \quad j_{1,0} = 2 \quad j_{1,1} = 1 \quad j_{2,0} = 1.\]
    Or we could write
    \[ j_0 = 1 \quad j_1 = 3 \quad j_2 = 1.\]
\end{example}

\begin{rem} Although inductively pierced codes may have many possible piercing orders, we will show that the numbers $j_{k,v}$ are independent of the particular order chosen (see Theorem~\ref{thm:jkl}). This will also have interesting consequences for chordal graphs, see Section~\ref{sec:chordal}.
\end{rem}

\begin{rem}
    As noted earlier, in order to have a $k$-piercing, a neural code must have a $(k-1)$-piercing. As a result, given a piercing order, if $j_k=0$ for some $k$, then $j_u=0$ for all $u >k$ and $C$ is at most $(k-1)$-inductively pierced. Similarly, if $j_{k,v}=0$ for some $k$, then $j_{u,v}=0$ for all $u>k$.
\end{rem}

From Theorem \ref{thm:inductMultiBetti}, we can compute the Betti numbers of an inductively pierced code.

\begin{notation}
    In what follows, $\delta_{a,b}$ will denote the Kronecker delta function, i.e. 
    \[
    \delta_{a,b}=\begin{cases} 1 & \text{if } a=b, \\ 0 & \text{else.} \\ \end{cases}
    \]
\end{notation}

\begin{thm}\label{thm:xybettis}
    Suppose $\CC$ is an inductively pierced neural code on $n$ neurons, and $J_n \subset S$ is the \pci of $\CC$. Given a piercing order of $\CC$ and corresponding $j_{k,\ell}$, if $w > 0$ 
    then \[\betti{w,w+1-v,v}{S/J_n} = \sum_{k = 0}^{n - 1} \sum_{\ell = 0}^{n - 1} \left(j_{k,\ell} - \delta_{\ell,0}\right) \binom{n-1-k-\ell}{w - v} \binom{\ell}{v}.\] Moreover, $\betti{0,0,0}{S/J_n} = 1$, and all other Betti numbers are 0.
\end{thm}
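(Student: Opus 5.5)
We argue by induction on $n$, unrolling the recursion of Theorem~\ref{thm:inductMultiBetti} along the fixed piercing order $i_1<\cdots<i_n$ (relabel so that this order is $1,\ldots,n$). For $m\le n$ write $\CC_m=\CC\backslash\{m+1,\ldots,n\}$ and $J_m$ for its polarized neural ideal. The neuron $m$ is a $k_m$-piercing of $\CC_{m-1}$ contained in $\ell_m$ other place fields. Theorem~\ref{thm:inductMultiBetti} applied to $J_m$ gives, for $w>0$ and $u+v=w+1$, $u>0$,
\[
\betti{w,u,v}{S/J_m}=\tbinom{m-1-k_m-\ell_m}{u-1}\tbinom{\ell_m}{v}+\betti{w-1,u-1,v}{S/J_{m-1}}+\betti{w,u,v}{S/J_{m-1}},
\]
where the middle term is read as $\delta_{u,1}\delta_{v,0}$ when $w=1$. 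The vanishing claims (everything outside $u+v=w+1$ and $\betti{0,0,0}{S/J_n}=1$) come directly from Remark~\ref{rem:wuvTwist} and the theorem, so only the displayed formula needs work.

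The natural approach is generating functions. Set $P_m(t,a,b)=\sum_{w,u,v}\betti{w,u,v}{S/J_m}t^wa^ub^v$. The recursion is then essentially the mapping cone identity
\[
P_m=(1+ta)P_{m-1}+ta(1+ta)^{m-1-k_m-\ell_m}(1+tb)^{\ell_m}-(\text{correction}),
\]
with the correction accounting for the fact that the $(1,1,0)$ term $ta\cdot 1$ of $(1+ta)P_{m-1}$ does not survive. Indeed $\betti{1,1,0}{S/J_m}=0$ by Remark~\ref{rem:wuvTwist}. I would first verify this identity precisely against Theorem~\ref{thm:inductMultiBetti}, including the boundary cases $w=1$ and $u\le 0$. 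I expect the correction to be exactly $ta$, so that
\[
P_m-1=(1+ta)(P_{m-1}-1)+ta\bigl[(1+ta)^{m-1-k_m-\ell_m}(1+tb)^{\ell_m}-1\bigr].
\]

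Iterating from $P_0=1$ gives $P_n-1=\sum_{m}(1+ta)^{n-m}\,ta\,[(1+ta)^{m-1-k_m-\ell_m}(1+tb)^{\ell_m}-1]$. The key simplification is that $(1+ta)^{n-m}(1+ta)^{m-1-k_m-\ell_m}=(1+ta)^{n-1-k_m-\ell_m}$ depends only on $(k_m,\ell_m)$ and not on $m$. Grouping the first terms by type $(k,\ell)$ therefore produces $\sum_{k,\ell}j_{k,\ell}\,ta(1+ta)^{n-1-k-\ell}(1+tb)^\ell$.

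The step I expect to be the main obstacle is the leftover sum $-\sum_m ta(1+ta)^{n-m}$, which must be matched to the $-\delta_{\ell,0}$ term. Read literally, the statement subtracts $\sum_{k}ta(1+ta)^{n-1-k}$ over all $k$ from $0$ to $n-1$. Since $\{n-m: 1\le m\le n\}=\{n-1-k: 0\le k\le n-1\}$, this matches our leftover sum exactly, so reindexing with $k=m-1$ finishes the computation. Extracting the coefficient of $t^wa^{w+1-v}b^v$ then yields $\binom{n-1-k-\ell}{w-v}\binom{\ell}{v}$, which is the stated formula. Careful checking is needed at the $w=1$ boundary and at the convention that the correction is exactly $ta$, and I would confirm both against Example~\ref{ex:prcgorder}.
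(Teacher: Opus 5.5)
Your strategy is sound and differs from the paper's, but the generating-function identity it rests on is off by a factor of $t$. As written, the final coefficient extraction therefore fails.

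\textbf{The error.} The module $(S/\fp_m)[-(1,0)]$ has its generator in homological degree $0$ and multidegree $(1,0)$. Its Poincar\'e series is therefore $a(1+ta)^{N_m}(1+tb)^{\ell_m}$ with $N_m=m-1-k_m-\ell_m$, not $ta(1+ta)^{N_m}(1+tb)^{\ell_m}$. Every monomial $t^wa^ub^v$ of the latter has $u+v=w$, so its coefficient of $t^wa^{w+1-v}b^v$ is zero. Theorem~\ref{thm:inductMultiBetti}, by contrast, puts $\binom{N_m}{u-1}\binom{\ell_m}{v}$ at $u+v=w+1$.

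For the same reason, the correction to $(1+ta)P_{m-1}+a(1+ta)^{N_m}(1+tb)^{\ell_m}$ is $a+ta$, not $ta$. Since $J_m$ has no generators of degree $(1,0)$, the connecting map from $\operatorname{Tor}_1$ of $S/(J_{m-1}+(x_m))$ to $\operatorname{Tor}_0$ of $(S/\fp_m)[-(1,0)]$ is an isomorphism in multidegree $(1,0)$. It kills both the $ta$ coming from $x_m$ and the degree-zero generator $a$ of the Koszul piece. Your two displays also disagree with each other: the second amounts to a correction of $2ta$.

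\textbf{The fix.} Translating Theorem~\ref{thm:inductMultiBetti} term by term gives
\[
P_m-1=(1+ta)(P_{m-1}-1)+a\left[(1+ta)^{N_m}(1+tb)^{\ell_m}-1\right],
\]
which is your second display with the outer $ta$ replaced by $a$. With that change the rest of your argument goes through verbatim. Unrolling yields
\[
P_n-1=\sum_{k,\ell}(j_{k,\ell}-\delta_{\ell,0})\,a(1+ta)^{n-1-k-\ell}(1+tb)^{\ell},
\]
whose coefficient of $t^wa^{w+1-v}b^v$ is exactly the claimed sum. For Example~\ref{ex:prcgorder} this gives $4ta^2+tab+4t^2a^3+2t^2a^2b+t^3a^4+t^3a^3b$. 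These are the correct multigraded Betti numbers of $(x_1x_3,x_1x_5,x_3x_4,x_4x_5,x_5y_3)$.

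\textbf{Comparison with the paper.} The paper carries the closed formula itself through a one-neuron induction. It merges the two inherited terms via Pascal's rule $\binom{n-2-a-b}{w-1-v}+\binom{n-2-a-b}{w-v}=\binom{n-1-a-b}{w-v}$ and then re-indexes $j'_{a,b}$ to $j_{a,b}$. The case $w=1$ gets a separate, more laborious treatment that needs $\sum_a(j'_a-1)=0$.

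In your version, Pascal's rule is just multiplication by $1+ta$, and the case $w=1$ needs no special handling. The $-\delta_{\ell,0}$ terms also acquire a clear meaning: each neuron $m$ contributes one cancelled copy of $a(1+ta)^{n-m}$, and $n-m$ runs over $n-1-k$ for $0\le k\le n-1$. The paper's route never leaves individual Betti numbers. Yours produces the closed form in one stroke, and setting $b=a$ immediately gives Corollary~\ref{cor:bettinumbers} as well.
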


\begin{proof}
    We proceed by induction on $n$. For the case of $n = 1$, we have $J = (0)$ and so $S / J = S$. Thus, \[\betti[S]{w,u,v}{S/J} = \begin{cases} 1 & w = u = v = 0 \\ 0 & \text{ otherwise} \end{cases}.\] For the case of $w > 0$, we note that $k = \ell = 0 = n - 1$, so $j_{k,\ell} - \delta_{\ell,0} = 0$, causing the theorized formula for $\betti[S]{w,w+1-v,v}{S/J}$ also to vanish.

    Now suppose the formula holds for some $n-1$ with $n > 0$ and consider $n$ such that $n$ is a $k$-piercing contained in $\ell$ other place fields. Let $J_n$ denote the \pci   for $\CC$ and $J_{n-1}$ the \pci for $\CC \backslash \{n\}$.  By Theorem \ref{thm:inductMultiBetti}, we have the desired vanishing and $\betti[S]{0,0,0}{S/J_n} = 1$. Thus, we turn our attention to the case of $w > 0$.

    If $w = 1$ and $u + v = w + 1$, then either $v = 0$ or $v = 1$. In the case of $v = 0$, Theorem \ref{thm:inductMultiBetti} and our induction hypothesis yield
    \begin{align*}
        \betti[S]{1,2,0}{\frac{S}{J_n}} &= \tbinom{n - 1 - k - \ell}{1}\tbinom{\ell}{0} + \betti[S]{1,2,0}{\frac{S}{J_{n-1}}} \\
         &= \tbinom{n - 1 - k - \ell}{1} + \sum_{a = 0}^{n - 2} \sum_{b = 0}^{n - 2} \left(j_{a,b}' - \delta_{b,0}\right) \tbinom{n-2-a-b}{1} \tbinom{b}{0} \\
         &= \tbinom{n - 1 - k - \ell}{1} + \sum_{a = 0}^{n - 2} \sum_{b = 0}^{n - 2} \left(j_{a,b}' - \delta_{b,0}\right) \tbinom{n-2-a-b}{1} \\
         &= \tbinom{n - 1 - k - \ell}{1} + \sum_{a = 0}^{n - 2} \sum_{b = 0}^{n - 2} \left(j_{a,b}' - \delta_{b,0}\right) \left[\tbinom{n - 1 - a - b}{1} - \tbinom{n - 2 - a - b}{0}\right] \\
         &= \tbinom{n - 1 - k - \ell}{1} + \sum_{a = 0}^{n - 2} \sum_{b = 0}^{n - 2} \left(j_{a,b}' - \delta_{b,0}\right) \tbinom{n - 1 - a - b}{1} - \sum_{a = 0}^{n - 2} \sum_{b = 0}^{n - 2} \left(j_{a,b}' - \delta_{b,0}\right) \\
         &= \tbinom{n - 1 - k - \ell}{1} + \sum_{a = 0}^{n - 2} \sum_{b = 0}^{n - 2} \left(j_{a,b}' - \delta_{b,0}\right) \tbinom{n - 1 - a - b}{1} - \underset{=0}{\underbrace{\sum_{a = 0}^{n - 2} \left(j_{a} - 1\right)}} \\
         &= \tbinom{n - 1 - k - \ell}{1} + \sum_{a = 0}^{n - 2} \sum_{b = 0}^{n - 2} \left(j_{a,b}' - \delta_{b,0}\right) \tbinom{n - 1 - a - b}{1}
    \end{align*}
    If $n$ is contained in all other place fields, then $\ell = n-1$ and $n$ must be a 0-piercing, so $k = 0$. This yields $\tbinom{n - 1 - k - \ell}{1} = \tbinom{0}{1} = 0$ and so \[\betti[S]{1,2,0}{\frac{S}{J_n}} = \sum_{a = 0}^{n - 2} \sum_{b = 0}^{n - 2} \left(j_{a,b}' - \delta_{b,0}\right) \tbinom{n - 1 - a - b}{1}.\] Now noting that if either $a = n - 1$ or $b = n - 1$, then $\tbinom{n - 1 - a - b}{1} = 0$ and thus it must be vacuously true that
    \begin{align*}
        \betti[S]{1,2,0}{\frac{S}{J_n}} &= \sum_{a = 0}^{n - 1} \sum_{b = 0}^{n - 1} \left(j_{a,b}' - \delta_{b,0}\right) \tbinom{n - 1 - a - b}{1} \\
         &= \sum_{a = 0}^{n - 1} \sum_{b = 0}^{n - 1} \left(j_{a,b}' - \delta_{b,0}\right) \tbinom{n - 1 - a - b}{1} \tbinom{b}{0} \\
         &= \sum_{a = 0}^{n - 1} \sum_{b = 0}^{n - 1} \left(j_{a,b} - \delta_{b,0}\right) \tbinom{n - 1 - a - b}{1} \tbinom{b}{0}.
    \end{align*}
    Using the same arguments, the case of $w = u = v = 1$ yields
    \begin{align*}
        \betti[S]{1,1,1}{\frac{S}{J_n}} &= \tbinom{\ell}{1} + \sum_{a = 0}^{n - 2} \sum_{b = 0}^{n - 2} \left(j_{a,b}' - \delta_{b,0}\right) \tbinom{b}{1} \\
         &= \tbinom{\ell}{1} + \sum_{a = 0}^{n - 2} \sum_{b = 0}^{n - 2} \left(j_{a,b}' - \delta_{b,0}\right) b \\
         &= \tbinom{\ell}{1} + \sum_{a = 0}^{n - 2} \sum_{b = 0}^{n - 2} b j_{a,b}'  - \sum_{a = 0}^{n - 2} \sum_{b = 0}^{n - 2} \underset{=0}{\underbrace{b \delta_{b,0}}} \\
         &= \tbinom{\ell}{1} + \sum_{a = 0}^{n - 2} \sum_{b = 0}^{n - 2} b j_{a,b}'.
    \end{align*}
    Now, if $0 \leq k \leq n - 2$ and $0 \leq \ell \leq n - 2$, then we set \[j_{a,b} = \begin{cases} j_{a,b}' + 1 & k = a \text{ and } \ell = b \\ j_{a,b}' & \text{otherwise} \end{cases}.\] We further extend this by setting $j_{a,b} = 0$ is $a = n-1$ or $b = n-1$, which produces
    \begin{align*}
        \betti[S]{1,1,1}{\frac{S}{J_n}} &= \sum_{a = 0}^{n - 1} \sum_{b = 0}^{n - 1} b j_{a,b} \\
         &= \sum_{a = 0}^{n - 1} \sum_{b = 0}^{n - 1} \left( j_{a,b} - \delta_{b,0} \right) \tbinom{n - 1 - a - b}{0} \tbinom{b}{1}.
    \end{align*}
    If $k = n - 1$ or $\ell = n - 1$, then we set \[j_{a,b} = \begin{cases} 1 & k = a \text{ and } \ell = b \\ j_{a,b}' & a, b < n - 1 \\ 0 & \text{otherwise} \end{cases}.\] The result follows as in the previous case.

    We finally turn our attention to the simpler case of $w > 1$. From the induction hypothesis, we observe that
    \begin{align*}
        \betti[S]{w-1,w-v,v}{\frac{S}{J_{n-1}}} + \betti[S]{w,w+1-v,v}{\frac{S}{J_{n-1}}} \hspace{-5cm} & \\
         =& \sum_{a = 0}^{n - 2} \sum_{b = 0}^{n - 2} \left(j_{a,b}' - \delta_{b,0}\right) \tbinom{n-2-a-b}{w - 1 - v} \tbinom{b}{v} + \sum_{a = 0}^{n - 2} \sum_{b = 0}^{n - 2} \left(j_{a,b}' - \delta_{b,0}\right) \tbinom{n-2-a-b}{w - v} \tbinom{b}{v} \\
         =& \sum_{a = 0}^{n - 2} \sum_{b = 0}^{n - 2} \left(j_{a,b}' - \delta_{b,0}\right) \left[\tbinom{n-2-a-b}{w - 1 - v} + \tbinom{n - 2 - a - b}{w - v} \right] \tbinom{b}{v} \\
         =& \sum_{a = 0}^{n - 2} \sum_{b = 0}^{n - 2} \left(j_{a,b}' - \delta_{b,0}\right) \tbinom{n-1-a-b}{w - v} \tbinom{b}{v}.
    \end{align*}
    Combining this with Theorem \ref{thm:inductMultiBetti} then gives
    \begin{align*}
        \betti[S]{w,w+1-v,v}{\frac{S}{J_n}} &= \tbinom{n - 1 - k - \ell}{w - v} \tbinom{\ell}{v} + \betti[S]{w-1,w-v,v}{\frac{S}{J_{n-1}}} + \betti[S]{w,w+1-v,v}{\frac{S}{J_{n-1}}} \\
         &= \tbinom{n - 1 - k - \ell}{w - v} \tbinom{\ell}{v} + \sum_{a = 0}^{n - 2} \sum_{b = 0}^{n - 2} \left(j_{a,b}' - \delta_{b,0}\right) \tbinom{n-1-a-b}{w - v} \tbinom{b}{v}.
    \end{align*}
    From here, the same reasoning used in the case of $w = 1$ to relate $j_{a,b}$ and $j_{a,b}'$ applies and gives the desired result.
\end{proof}

Similar formulas arise for Betti numbers and graded Betti numbers.

\begin{cor}\label{cor:bettinumbers}
    Given the same setup as in Theorem \ref{thm:xybettis}, the graded Betti numbers of $\CC$ are given by:
    \[
    \betti{w,i}{S/J} = \begin{cases}
        1 & w = i = 0 \\
        \sum_{k = 0}^{n-1}(j_k - 1) \binom{n - 1 - k}{w} & i = w + 1 \geq 2 \\
        0 & \text{otherwise}
    \end{cases}.
    \]
    The total Betti numbers of $\CC$ are given by:
    \[
    \betti{w}{S/J} = \begin{cases}
        1 & w = 0 \\
        \sum_{k = 0}^{n-1}(j_k - 1) \binom{n - 1 - k}{w} & w > 0
    \end{cases}.
    \]
\end{cor}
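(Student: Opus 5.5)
The plan is to derive both formulas directly from Theorem~\ref{thm:xybettis} by summing multigraded Betti numbers over the multidegrees of a fixed total degree. For the graded statement, I will use
\[\betti{w,i}{S/J} = \sum_{u+v=i} \betti{w,u,v}{S/J}.\]
By Theorem~\ref{thm:xybettis}, the only nonzero multigraded Betti numbers are $\betti{0,0,0}{S/J}=1$ and those with $w>0$ and $u+v=w+1$. This immediately gives $\betti{0,0}{S/J}=1$. It also shows $\betti{w,i}{S/J}=0$ unless $w=i=0$ or $i=w+1$ with $w\ge 1$, which settles the ``otherwise'' case.

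For $w>0$ and $i=w+1$, I would write
\[\betti{w,w+1}{S/J}=\sum_{v=0}^{w}\betti{w,w+1-v,v}{S/J}=\sum_{k=0}^{n-1}\sum_{\ell=0}^{n-1}(j_{k,\ell}-\delta_{\ell,0})\sum_{v}\tbinom{n-1-k-\ell}{w-v}\tbinom{\ell}{v}.\]
The inner sum is exactly the Chu--Vandermonde sum, so it equals $\binom{n-1-k}{w}$, independent of $\ell$. Restricting to $0\le v\le w$ loses nothing, because terms with $v>w$ vanish since $\binom{n-1-k-\ell}{w-v}=0$ for a negative lower index.

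One point needs care. When $\ell>n-1-k$, the top entry $n-1-k-\ell$ is negative, and Vandermonde does not apply literally. I plan to avoid this issue in two ways. First, for $\ell\ge 1$ we have $j_{k,\ell}=0$ whenever $\ell>n-1-k$ by Lemma~\ref{lem:piercingsum}, so those terms contribute nothing. Second, for $\ell=0$ the binomial $\binom{0}{v}$ forces $v=0$, and the sum is just $\binom{n-1-k}{w}$, which is $0$ when $k=n-1$ and $w>0$. Either way the identity holds, with the convention that binomials with a negative top entry are $0$, as the paper implicitly uses.

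Next I swap the order of summation. This gives
\[\sum_{k=0}^{n-1}\binom{n-1-k}{w}\sum_{\ell}(j_{k,\ell}-\delta_{\ell,0})=\sum_{k=0}^{n-1}(j_k-1)\binom{n-1-k}{w},\]
using $\sum_\ell j_{k,\ell}=j_k$ from Lemma~\ref{lem:piercingsum} and $\sum_\ell\delta_{\ell,0}=1$. This is the claimed graded formula.

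The total Betti numbers then follow by summing over $i$. Since for each $w>0$ only $i=w+1$ contributes, $\betti{w}{S/J}=\betti{w,w+1}{S/J}$, and $\betti{0}{S/J}=1$. Consistency with Corollary~\ref{cor:regularBetti}, which is computed from the regularity-2 linearity in Remark~\ref{rem:linear}, provides a sanity check. The main obstacle is only the bookkeeping at the boundary of the index range, namely negative binomial top entries. That is handled by the vanishing of $j_{k,\ell}$ from Lemma~\ref{lem:piercingsum} and the $\ell=0$ case analysis above.
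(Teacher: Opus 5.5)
Your proposal is correct and is essentially the second argument the paper indicates: sum the multigraded Betti numbers of Theorem~\ref{thm:xybettis} over $u+v=i$, collapse the inner sum to $\binom{n-1-k}{w}$ by Chu--Vandermonde, and use $\sum_{\ell} j_{k,\ell}=j_k$ (the paper's primary route instead reruns the induction of Theorem~\ref{thm:xybettis} with Corollaries~\ref{cor:gradedBetti} and~\ref{cor:regularBetti}). Your use of Lemma~\ref{lem:piercingsum} to kill the terms with $\ell>n-1-k$ is exactly the detail the paper leaves implicit, while your separate $\ell=0$ discussion is harmless but unnecessary, since $n-1-k\ge 0$ for every $k\le n-1$.
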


\begin{proof}
    These formulas can be obtained using the same method as Theorem \ref{thm:xybettis} but using Corollary \ref{cor:gradedBetti} and Corollary \ref{cor:regularBetti}, respectively, in place of Theorem \ref{thm:inductMultiBetti}. Alternatively, these results can be found by combining Theorem \ref{thm:xybettis} with \[\betti[S]{w,i}{S/J} = \sum_{u + v = i} \betti[S]{w,u,v}{S/J}\] and applying combinatorial identities.
\end{proof}

The relationship between the Betti numbers and the piercing numbers can be expressed in a linear equation.

\begin{cor}\label{cor:pierceToBetti}
    Writing $\beta_{w,i} = \betti{w,i}{S/J}$, Corollary~\ref{cor:bettinumbers} can be summarized in the following equation.
    \begin{align*}
   \underset{\vec{\beta}_{+1}(S/J)}{\underbrace{\begin{pmatrix} \beta_{n-1,n} \\ \beta_{n-2,n-1} \\ \vdots \\ \beta_{2,3} \\ \beta_{1,2} \\ \beta_{0,1} \end{pmatrix}}}
 &= 
 \underset{A_n}{\underbrace{\begin{pmatrix} \tbinom{n-1}{n-1} & 0 & \cdots & 0 & 0 & 0 \\ \tbinom{n-1}{n-2} & \tbinom{n-2}{n-2} & & 0 & 0 & 0 \\ \vdots & \vdots & \ddots & & \vdots & \vdots \\ \tbinom{n-1}{2} & \tbinom{n-2}{2} & \cdots & \tbinom{2}{2} & 0 & 0 \\ \tbinom{n-1}{1} & \tbinom{n-2}{1} & \cdots & \tbinom{2}{1} & \tbinom{1}{1} & 0 \\ \tbinom{n-1}{0} & \tbinom{n-2}{0} & \cdots & \tbinom{2}{0} & \tbinom{1}{0} & \binom{0}{0} \end{pmatrix}}}
 \underset{\vec{j}(S/J) - \textbf{1}}{\underbrace{\begin{pmatrix} j_0 - 1 \\ j_1 - 1 \\ \vdots \\ j_{n-3} - 1 \\ j_{n-2} - 1 \\ j_{n - 1} - 1\end{pmatrix}}}
\end{align*}
Moreover, the matrix $A_n$ is an invertible matrix and thus the Betti numbers and piercing numbers are in a one-to-one correspondence. Therefore, the piercing numbers are independent of the piercing order and are given by \[j_k = 1 + \sum_{w = n - 1 - k}^{n - 1} \binom{w}{n - 1 - k} (-1)^{w - n + 1 + k} \betti{w,w+1}{S/J}.\]
\end{cor}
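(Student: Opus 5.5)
The plan is to read the matrix identity off Corollary~\ref{cor:bettinumbers} row by row, then invert the matrix by binomial inversion. Index the rows by $w=n-1,n-2,\ldots,0$ and the columns by $k=0,1,\ldots,n-1$. Corollary~\ref{cor:bettinumbers} gives
\[
\beta_{w,w+1}=\sum_{k=0}^{n-1}(j_k-1)\tbinom{n-1-k}{w}
\]
for $w\ge 1$. The entry of $A_n$ in row $w$ and column $k$ is $\tbinom{n-1-k}{w}$, and it vanishes exactly when $k>n-1-w$. This accounts for the zeros above the diagonal, so the rows with $w\geq 1$ are precisely the identity above.

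The bottom row ($w=0$) needs separate treatment, since Corollary~\ref{cor:bettinumbers} does not cover it. Here the left side is $\beta_{0,1}=0$, because $\betti{0,i}{S/J}$ vanishes for $i\ne 0$. The right side is $\sum_k (j_k-1)\tbinom{n-1-k}{0}=\sum_{k=0}^{n-1}j_k-n$, which is $0$ by the last statement of Lemma~\ref{lem:piercingsum}. So the full matrix equation holds.

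For invertibility, I observe that $A_n$ is lower triangular. Its diagonal entries are $\tbinom{n-1-k}{n-1-k}=1$, so $\det A_n=1$ and $A_n$ is invertible over $\Z$. Since the left-hand vector consists of graded Betti numbers of $S/J$, it depends only on $J$ and hence only on $\CC$. Therefore $\vec{j}=A_n^{-1}\vec\beta_{+1}+\mathbf 1$ is independent of the chosen piercing order.

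For the explicit formula, substitute $m=n-1-k$, $f(m)=j_{n-1-m}-1$ and $b_w=\beta_{w,w+1}$. The system becomes $b_w=\sum_{m}\tbinom{m}{w}f(m)$ for $0\le w\le n-1$, which is the upper Pascal matrix. Its inverse is given by the classical binomial inversion $f(m)=\sum_{w\ge m}(-1)^{w-m}\tbinom{w}{m}b_w$. I would verify this with the identity
\[
\sum_{w}(-1)^{w-m}\tbinom{w}{m}\tbinom{p}{w}=\tbinom{p}{m}\sum_{r}(-1)^r\tbinom{p-m}{r}=\delta_{p,m}.
\]
Substituting back $m=n-1-k$ and $w$ ranging over $n-1-k\le w\le n-1$ gives the stated formula for $j_k$.

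The only delicate points are the bottom row, which relies on Lemma~\ref{lem:piercingsum} rather than Corollary~\ref{cor:bettinumbers}, and the bookkeeping of the index reversal. The term $w=0$ appears only when $k=n-1$, and it contributes $b_0=0$, so it is harmless.
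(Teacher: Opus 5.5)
Your proposal is correct and follows the same route as the paper. You read the rows off Corollary~\ref{cor:bettinumbers}, note that $A_n$ is lower triangular with ones on the diagonal (hence invertible), deduce order-independence because the Betti numbers depend only on $J$, and invert the Pascal matrix to get $j_k$. Your explicit check of the bottom row ($\beta_{0,1}=0=\sum_k j_k-n$ via Lemma~\ref{lem:piercingsum}) and your verification of the binomial inversion fill in details the paper leaves implicit, since it simply states the inverse of the Pascal matrix.
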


\begin{proof}
    The equation follows from Corollary~\ref{cor:bettinumbers}. The matrix $A_n$ is a lower triangular matrix whose main diagonal consists of only ones, thus $A_n$ is invertible. This puts the Betti numbers and piercing numbers into a one-to-one correspondence. That is, we have
    \[\vec{j}(S/J) - \textbf{1} = A_n^{-1} \cdot \vec{\beta}_{+1}(S/J).\]
    Since the Betti numbers are independent of the piercing order, the piercing numbers must also be independent of the piercing order.

    Moreover, the matrix $A_n$ is sometimes referred to as the Pascal Matrix whose inverse is given by 
    \[A_n^{-1} = \left(\begin{smallmatrix} \tbinom{n-1}{n-1} & 0 & \cdots & 0 & 0 & 0 \\ -\tbinom{n-1}{n-2} & \tbinom{n-2}{n-2} & & 0 & 0 & 0 \\ \vdots & \vdots & \ddots & & \vdots & \vdots \\ (-1)^{n-1}\tbinom{n-1}{2} & (-1)^n\tbinom{n-2}{2} & \cdots & \tbinom{2}{2} & 0 & 0 \\ (-1)^n \tbinom{n-1}{1} & (-1)^{n+1}\tbinom{n-2}{1} & \cdots & -\tbinom{2}{1} & \tbinom{1}{1} & 0 \\ (-1)^{n+1} \tbinom{n-1}{0} & (-1)^{n+2}\tbinom{n-2}{0} & \cdots & \tbinom{2}{0} & -\tbinom{1}{0} & \binom{0}{0} \end{smallmatrix}\right).\] The formula for $j_k$ follows by inputting the above matrix into the linear equation $\vec{j}(S/J) - \textbf{1} = A_n^{-1} \cdot  \vec{\beta}_{+1}(S/J)$.
\end{proof}

A similar result holds for $j_{k,\ell}$, however the linear algebra approach is not quite as clean as the above proof. Instead, we obtain the following result using combinatorial formulas.

\begin{thm}\label{thm:jkl}
Let $\CC$ be an inductively pierced neural code on $n$ neurons, and $J$ the \pci of $\CC$ in the ring $S$. Given a piercing order, let $j_{a,b}$ denote the number of $a$-piercings of $\CC$ contained in exactly $b$ other place fields. Then
    \[j_{a,b} = \delta_{b,0} + \sum_{v = 0}^{n - 1} \sum_{w = 0}^{n - 1} \betti{w,w+1-v,v}{S/J} \binom{w - v}{n - 1 - a - b} \binom{v}{b} (-1)^{w - n + 1 + a}.\]
    As a consequence, the $j_{a,b}$ are independent of the choice of piercing order.
\end{thm}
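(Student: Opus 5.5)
The plan is to invert the formula of Theorem~\ref{thm:xybettis} by applying binomial inversion separately in each of the two multidegree variables. Throughout, $\binom{p}{q}=0$ whenever $q<0$ or $q>p\geq 0$.

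First I reindex Theorem~\ref{thm:xybettis}. For $w>0$ and $0\le v\le w$, put $u'=w-v$ and $\beta'_{u',v}:=\betti{w,w+1-v,v}{S/J}$. Substitute $m=n-1-k-\ell$. By Lemma~\ref{lem:piercingsum}, $j_{k,\ell}=0$ whenever $k+\ell>n-1$. The $\delta_{\ell,0}$ term only contributes when $\ell=0$, $0\le k\le n-1$, and then $m\ge 0$. So every term with $m<0$ vanishes, and the theorem reads
\[
\beta'_{u',v}=\sum_{m=0}^{n-1}\sum_{\ell=0}^{n-1-m} c_{m,\ell}\binom{m}{u'}\binom{\ell}{v},\qquad c_{m,\ell}:=j_{n-1-m-\ell,\ell}-\delta_{\ell,0}.
\]
I also check the boundary case $u'=v=0$, which corresponds to $w=0$. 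There $\beta'_{0,0}=\betti{0,1,0}{S/J}=0$, because the only nonzero Betti number in homological degree $0$ is $\betti{0,0,0}{S/J}=1$. The right-hand side at $u'=v=0$ is $\sum_{m,\ell}c_{m,\ell}=\sum_k (j_k-1)=n-n=0$, using Lemma~\ref{lem:piercingsum}. Hence the displayed identity holds for all $u',v\ge 0$, including the $w=0$ term that appears in the statement.

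Next comes the inversion. The matrix $P$ with entries $P_{u',m}=\binom{m}{u'}$, for $0\le u',m\le n-1$, is unitriangular. Its inverse has entries $(-1)^{u'-m}\binom{u'}{m}$; this is the standard binomial inversion, the same Pascal-matrix fact used in Corollary~\ref{cor:pierceToBetti}. The identity above says $\beta'=P\,c\,P^{T}$ as $n\times n$ arrays, with the $\ell$-range truncated but padded by zeros. Therefore $c=P^{-1}\beta' (P^{-1})^T$, that is,
\[
c_{m,b}=\sum_{u',v}(-1)^{u'-m}(-1)^{v-b}\binom{u'}{m}\binom{v}{b}\beta'_{u',v}.
\]
Now substitute $m=n-1-a-b$ and $u'=w-v$. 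The sign becomes $(-1)^{w-v-(n-1-a-b)+v-b}=(-1)^{w-n+1+a}$. Adding back $\delta_{b,0}$ to recover $j_{a,b}$ gives exactly the claimed formula. The double sum over $0\le v,w\le n-1$ covers every possibly nonzero term, since terms with $v>w$ have $\binom{w-v}{\cdot}=0$.

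The main obstacle I expect is bookkeeping at the boundary: making the index ranges genuinely square so that the two-sided inversion is legitimate. That needs the padding by zeros for $\ell>n-1-m$, the negative-argument binomial convention, and the $w=0$ check done in the first step. Once the formula is established, independence from the piercing order is immediate. The right-hand side involves only the multigraded Betti numbers of $S/J$, which are invariants of $J$ and hence of $\CC$.
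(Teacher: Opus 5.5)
Your proposal is correct and is essentially the paper's argument. The paper substitutes Theorem~\ref{thm:xybettis} into the right-hand side and collapses the sums using $\binom{\ell}{v}\binom{v}{b}=\binom{\ell}{b}\binom{\ell-b}{v-b}$ and the alternating-sum identity, which amounts to verifying your two-sided Pascal-matrix inversion $c=P^{-1}\beta'(P^{-1})^{T}$; your explicit check of the $w=0$ entry is a step the paper uses silently. The one piece of bookkeeping you leave implicit is that the terms of your square sum with $u'+v\ge n$ (i.e.\ $w>n-1$) vanish. This follows from your own displayed identity, since $\binom{m}{u'}\binom{\ell}{v}\neq 0$ forces $u'+v\le m+\ell\le n-1$ (equivalently $\pdim_S S/J\le n-1$, as the paper notes).
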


\begin{rem}
    This corollary implies that the results of this section do not depend on the choice of piercing order.
\end{rem}

\begin{proof}
For brevity, set $\alpha_{k,\ell} = j_{k, \ell} - \delta_{\ell,0}$.
\begin{align*}
    \sum_{v = 0}^{n - 1} \sum_{w = 0}^{n - 1} \betti[S]{w,w+1-v,v}{\frac{S}{J}} \tbinom{w - v}{n - 1 - a - b} \tbinom{v}{b} (-1)^{w - n + 1 + a} \hspace{-3.5cm} & \\
    =& \sum_{v = 0}^{n - 1} \sum_{u = -v}^{n - 1 - v} \betti[S]{u+v,u+1,v}{\frac{S}{J}} \tbinom{u}{n - 1 - a - b} \tbinom{v}{b} (-1)^{u + v - n + 1 + a}
\end{align*}
We note that if $v > 0$, then whenever $-v \leq u < 0$, we have $\tbinom{u}{n - 1 - a - b} = 0$. Thus, we can change the lower bound of the inner sum from $u = -v$ to $u = 0$ to get
\[ \sum_{v = 0}^{n - 1} \sum_{u = 0}^{n - 1 - v} \betti[S]{u+v,u+1,v}{\frac{S}{J}} \tbinom{u}{n - 1 - a - b} \tbinom{v}{b} (-1)^{u + v - n + 1 + a}. \] Next, if $u > n - 1 - v$, then $w = u + v > n - 1$ which implies $\betti[S]{w,u,v}{S/J} = 0$ since the projective dimension of $S/J$ is, at most, $n - 1$. As such the upper bound of the inner sum can be changed from $u = n - 1 - v$ to $u = n - 1$. With this we obtain the following.
\begin{align*}
    \sum_{v = 0}^{n - 1} \sum_{w = 0}^{n - 1} \betti[S]{w,w+1-v,v}{\frac{S}{J}} \tbinom{w - v}{n - 1 - a - b} \tbinom{v}{b} (-1)^{w - n + 1 + a} \hspace{-5cm} & \\
    =& \sum_{v = 0}^{n - 1} \sum_{u = 0}^{n - 1} \betti[S]{u+v,u+1,v}{\frac{S}{J}} \tbinom{u}{n - 1 - a - b} \tbinom{v}{b} (-1)^{u + v - n + 1 + a} \\
    =& \sum_{v = 0}^{n - 1} \sum_{u = 0}^{n - 1} \sum_{k = 0}^{n - 1} \sum_{\ell = 0}^{n - 1} \alpha_{k,\ell} \tbinom{n - 1 - k - \ell}{u} \tbinom{\ell}{v} \tbinom{u}{n - 1 - a - b} \tbinom{v}{b} (-1)^{u + v - n + 1 + a} \\
    =& \sum_{k = 0}^{n - 1} \sum_{\ell = 0}^{n - 1} \alpha_{k,\ell} \sum_{v = 0}^{n - 1} \sum_{u = 0}^{n - 1} \tbinom{\ell}{v} \tbinom{v}{b} \tbinom{n - 1 - k - \ell}{u}  \tbinom{u}{n - 1 - a - b}  (-1)^{u + v - n + 1 + a}
\end{align*}
Next, we use Example \ref{ex:comb}\eqref{comb:prod} to get $\tbinom{\ell}{v} \tbinom{v}{b} = \tbinom{\ell}{b} \tbinom{\ell - b}{v - b}$. With this equality, we find
\begin{align*}
    \sum_{v = 0}^{n - 1} \sum_{w = 0}^{n - 1} \betti[S]{w,w+1-v,v}{\frac{S}{J}} \tbinom{w - v}{n - 1 - a - b} \tbinom{v}{b} (-1)^{w - n + 1 + a} \hspace{-6cm} & \\
    =& \sum_{k = 0}^{n - 1} \sum_{\ell = 0}^{n - 1} \alpha_{k,\ell} \sum_{v = 0}^{n - 1} \sum_{u = 0}^{n - 1} \tbinom{\ell}{b} \tbinom{\ell - b}{v - b} \tbinom{n - 1 - k - \ell}{u}  \tbinom{u}{n - 1 - a - b}  (-1)^{u + v - n + 1 + a} \\
    =& \sum_{k = 0}^{n - 1} \sum_{\ell = 0}^{n - 1} \alpha_{k,\ell} \sum_{u = 0}^{n - 1} \sum_{v = 0}^{n - 1} \tbinom{\ell}{b} \tbinom{\ell - b}{v - b} \tbinom{n - 1 - k - \ell}{u}  \tbinom{u}{n - 1 - a - b}  (-1)^{u + v - n + 1 + a} \\
    =& \sum_{k = 0}^{n - 1} \sum_{\ell = 0}^{n - 1} \alpha_{k,\ell} \tbinom{\ell}{b} \sum_{u = 0}^{n - 1} \tbinom{n - 1 - k - \ell}{u}  \tbinom{u}{n - 1 - a - b}  (-1)^{u + b - n + 1 + a} \sum_{v = 0}^{n - 1} \tbinom{\ell - b}{v - b} (-1)^{v - b} \\
    =& \sum_{k = 0}^{n - 1} \alpha_{k,b} \tbinom{b}{b} \sum_{u = 0}^{n - 1} \tbinom{n - 1 - k - b}{u}  \tbinom{u}{n - 1 - a - b}  (-1)^{u + b - n + 1 + a}
\end{align*}
where the last equality follows from Example \ref{ex:comb}\eqref{comb:AltSum} gives \[\sum_{v = 0}^{n - 1} \tbinom{\ell - b}{v - b} (-1)^{v - b} = \sum_{v = b}^{\ell} \tbinom{\ell - b}{v - b} (-1)^{v - b} = \sum_{v = 0}^{\ell - b} \tbinom{\ell - b}{v} (-1)^v = \begin{cases} 1 & \ell = b \\ 0 & \ell \neq b \end{cases}.\]

By similar reasoning, we get $\tbinom{n - 1 - k - b}{u} \tbinom{u}{n - 1 - a - b} = \tbinom{n - 1 - k - b}{n - 1 - a - b} \tbinom{a - k}{u - n + 1 + a + b}$ and \[\sum_{u = 0}^{n - 1} \tbinom{a - k}{u - n + 1 + a + b} (-1)^{u - n + 1 + a + b} = \begin{cases} 1 & k = a \\ 0 & k \neq a \end{cases}.\] This allows for the following computation.
\begin{align*}
    \sum_{v = 0}^{n - 1} \sum_{w = 0}^{n - 1} \betti[S]{w,w+1-v,v}{\frac{S}{J}} \tbinom{w - v}{n - 1 - a - b} \tbinom{v}{b} (-1)^{w - n + 1 + a} \hspace{-3.5cm} & \\
    =& \sum_{k = 0}^{n - 1} \alpha_{k,b} \tbinom{b}{b} \sum_{u = 0}^{n - 1} \tbinom{n - 1 - k - b}{n - 1 - a - b} \tbinom{a - k}{u - n + 1 + a + b}  (-1)^{u + b - n + 1 + a} \\
    =& \sum_{k = 0}^{n - 1} \alpha_{k,b} \tbinom{b}{b} \tbinom{n - 1 - k - b}{n - 1 - a - b}  \sum_{u = 0}^{n - 1} \tbinom{a - k}{u - n + 1 + a + b}  (-1)^{u + b - n + 1 + a} \\
    =& \alpha_{a,b} \tbinom{b}{b} \tbinom{n - 1 - a - b}{n - 1 - a - b} \\
    =& \alpha_{a,b} \\
    =& j_{a,b} - \delta_{b,0}.
\end{align*}
Adding $\delta_{b,0}$ to both sides provides the result.
\end{proof}

Next we compute the projective dimensions of inductively pierced codes.

\begin{cor}\label{cor:pdim}
    Let $\CC$ be a $k$-inductively pierced neural code on $n$ neurons, and $J$ the \pci of $\CC$ in the ring $S$.

    Given a piercing order, if $j_0 = \cdots = j_{t-1} = 1$ for some $t > 0$, then $\betti{n-u-1}{S/J} = 0$ for $0 \leq u < t$.

    Moreover, if $t \ge 0$ is the smallest integer such that $j_t >1$, 
    then $\pdim_S S/J = n - 1 - t$. 
\end{cor}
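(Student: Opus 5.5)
We work directly from the total Betti number formula of Corollary~\ref{cor:bettinumbers}: for $w>0$,
\[
\betti{w}{S/J} = \sum_{k=0}^{n-1} (j_k-1)\binom{n-1-k}{w}.
\]
The plan is to read off which terms can be nonzero when $w$ is close to $n-1$. The binomial $\binom{n-1-k}{w}$ vanishes unless $n-1-k\ge w$, that is, unless $k\le n-1-w$.

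\emph{First claim.} Take $w=n-u-1$ with $0\le u<t$. The only surviving indices are $k\le u$, so
\[
\betti{n-u-1}{S/J}=\sum_{k=0}^{u}(j_k-1)\binom{n-1-k}{n-1-u}.
\]
Since $u<t$, every $k\le u$ has $j_k=1$, and the sum vanishes. The case $w=0$ needs a separate check, since $\betti{0}{S/J}=1$. This case occurs exactly when $u=n-1$, which would force $t\ge n$. I will argue that this is impossible: by Lemma~\ref{lem:piercingsum}, $\sum_k j_k=n$, so if $j_0=\cdots=j_{n-1}=1$ then $t$ cannot exceed $n$ in any meaningful way. In the intended range $w=n-u-1\ge 1$, the statement is then exactly the vanishing above. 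I will state the hypothesis $u<\min(t,n-1)$ implicitly, or note that the first claim is only meaningful for $w>0$.

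\emph{Second claim.} Let $t$ be the smallest index with $j_t>1$. By the first claim, $\betti{w}{S/J}=0$ for every $w$ with $n-t\le w\le n-1$. For $w>n-1$ the Betti numbers vanish automatically, since every binomial $\binom{n-1-k}{w}$ is then zero. It remains to show that $\betti{n-1-t}{S/J}\ne 0$. Evaluating the formula at $w=n-1-t$ leaves only the terms with $k\le t$:
\[
\betti{n-1-t}{S/J}=\sum_{k<t}(j_k-1)\binom{n-1-k}{n-1-t}+(j_t-1)\binom{n-1-t}{n-1-t}=j_t-1>0.
\]
We need $w=n-1-t>0$ here, i.e. $t<n-1$, so that we are using the $w>0$ branch of the formula. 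This holds: since $j_t\ge2$ and, by Lemma~\ref{lem:piercingsum}, $\sum_k j_k=n$, the indices with nonzero $j_k$ that come after $t$ are fewer than otherwise. More directly, a $t$-piercing requires at least $t+1$ neurons, and two of them require extra room. If instead $t=n-1$, the edge case $w=0$ gives $\pdim=0$, which is consistent with the claim. Hence $\pdim_S S/J=n-1-t$.

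The main obstacle is only the bookkeeping of these boundary cases: the special branch at $w=0$, and the range of $t$. I expect to handle both by the counting constraint $\sum_k j_k=n$ from Lemma~\ref{lem:piercingsum}, together with the fact that a $k$-piercing requires a previous $(k-1)$-piercing. The core argument is the triangular vanishing of the binomial coefficients.
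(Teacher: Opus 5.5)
Your argument is the paper's: substitute $w=n-1-u$ into the total Betti number formula of Corollary~\ref{cor:bettinumbers}, note that $\binom{n-1-k}{n-1-u}$ vanishes for $k>u$ while $j_k-1=0$ for $k\le u<t$, and then evaluate $\beta_{n-1-t}(S/J)=j_t-1>0$.

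One correction to your boundary discussion. The case $u=n-1$ is not impossible: $t=n$ occurs whenever $j_0=\cdots=j_{n-1}=1$, and then $\beta_0(S/J)=1\neq 0$. Examples are $n=1$, or two overlapping fields, where $J=0$. The identity $\sum_k j_k=n$ does not exclude this case; it is consistent with it. So the first claim is genuinely false at $u=n-1$ and must be restricted to $w=n-1-u>0$, which is exactly what your fallback does. The paper's proof overlooks this edge case too.

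In the second claim, by contrast, your hedge about $t=n-1$ is unnecessary. The count $n=\sum_k j_k\ge t+j_t\ge t+2$ gives $t\le n-2$, so $w=n-1-t\ge 1$ and the $w>0$ branch of the formula always applies.
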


\begin{proof}
    We start by assuming $j_0 = \cdots = j_{t-1} = 1$ for some $t > 0$ and let $0 \leq u < t$. We then find the following:
    \begin{align*}
        \betti{n-u-1}{S/J} &= \sum_{k = 0}^{n - 1} \left(j_k - 1\right) \binom{n - 1 - k}{n - 1 - u} \\
         &= \sum_{k = t}^{n - 1} \left(j_k - 1\right) \binom{n - 1 - k}{n-1-u}.
    \end{align*}
    Thus we have $u < t \leq k$, and so $n - 1 - u > n - 1 - k$. This yields $\tbinom{n-1-k}{n-1-u} = 0$ for all $0 \leq u < t$ and thus $\betti{n-u-1}{S/J} = 0$.

    If we further suppose $j_t > 1$, then we obtain \[\betti{n-t-1}{S/J} = \sum_{k = t}^{n - 1} \left(j_k - 1\right) \binom{n - 1 - k}{n-1-t} = \left(j_t - 1\right) \binom{n - 1 - t}{n-1-t} = j_t - 1 > 0. \] Since $\betti[S]{n-1-t}{S/J} \neq 0$ but $\betti[S]{n-u-1}{S/J} = 0$ for $0 \leq u < t$, we must have $\pdim_S S/J = n - 1 - t$.
\end{proof}

\begin{rem}
    While the multigraded Betti numbers of an inductively pierced neural code do give substantial information on the piercings, they can't completely distinguish all features of codes. For example, the following two neural ideals on 5 neurons have the same multigraded Betti numbers:
    \begin{align*}
        &I_A = (x_1x_3,x_1x_4,x_1x_5,x_2x_4,x_2x_5), \\
        &I_B = (x_1x_3,x_1x_4,x_1x_5,x_2x_4,x_4x_5).
    \end{align*}

    \begin{figure}[h]
\includegraphics[width=2in]{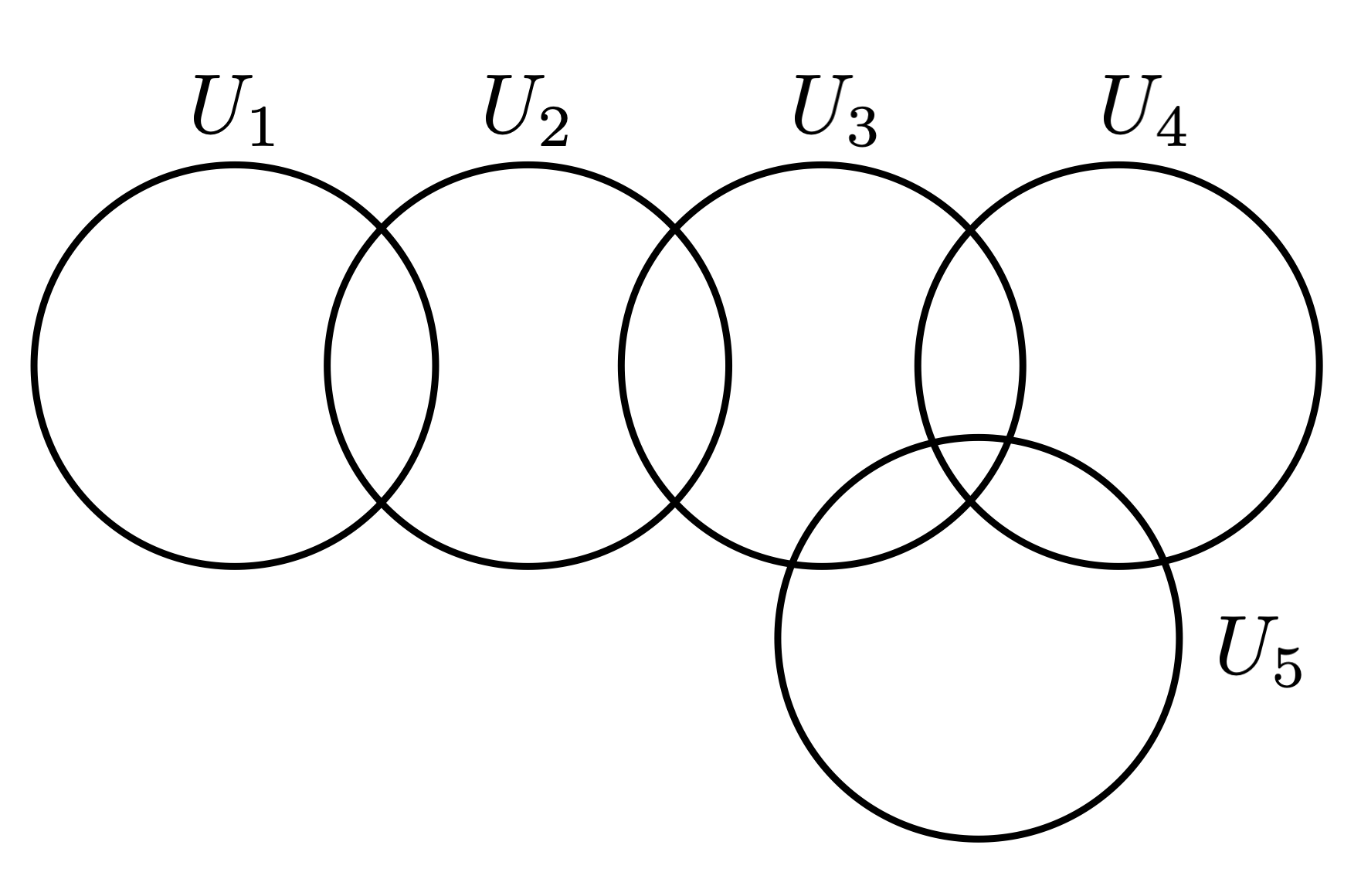} \hspace{0.5in} \includegraphics[width=2in]{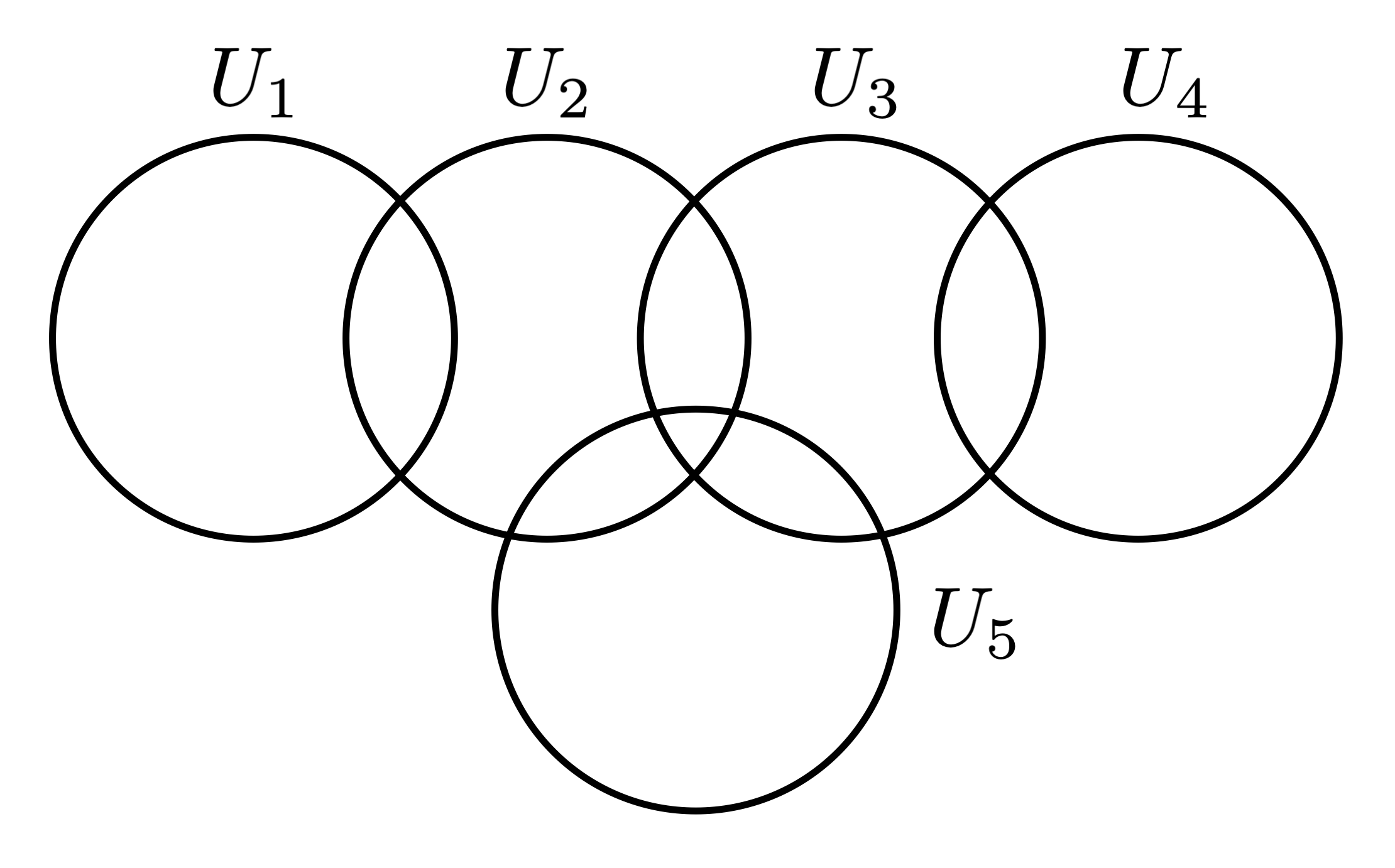}
\caption{At left, a realization of the code of ideal $I_A$. At right, that of $I_B$. }\label{fig:grBettiToric}
\end{figure}
    Figure~\ref{fig:grBettiToric} shows realizations of these two codes.
    However, a Macaulay2 computation \cite{M2} shows that the toric ideals of the corresponding neural codes (as defined in \cite{GrossObatakeYoungs}) are distinct; in particular the toric ideal of the first example has 15 generators, whereas the toric ideal of the second example has 17 generators. This does hint that the toric ideal of a neural code is a finer invariant than even multigraded Betti numbers. However, current results on toric ideals of neural codes only describe connections to $k$-inductively pierced codes for small values of $k$ \cites{toricideals,toricideals2}.
\end{rem}

\section{A result on chordal graphs}\label{sec:chordal}

Inductively pierced codes, as previously noted, are closely related to chordal graphs. In this section, we use the connection between the two to show how one of our own results (namely, Corollary~\ref{cor:pierceToBetti}) also proves a theorem about chordal graphs. First, we provide some definitions, and a well-known result, all as in \cite{West}.

\begin{defn} A graph $G$ is {\it chordal} if it has no chordless cycle (i.e., every cycle of length $\geq 4$ has a chord). 
\end{defn}

One notable property of chordal graphs is that they can be deconstructed by removing so-called simplicial vertices. In fact, this feature characterizes the graph.

\begin{defn} A vertex of a graph $G=(V,E)$ is {\it simplicial} if its neighborhood forms a clique. That is, $v\in V$ is simplicial if for all vertices $u,w$ so that $\{v,w\}$ and $\{v,u\}$ form edges, $\{u,w\}$ also forms an edge. 

\end{defn}

\begin{defn}
A {\it simplicial elimination ordering} is an ordering $v_n,...,v_1$ for deletion of vertices so that each vertex $v_{i+1}$ is a simplicial vertex of the remaining graph induced by $v_1,...,v_i$.
\end{defn}

\begin{thm}

A graph is chordal if and only if it has a simplicial elimination ordering.

\end{thm}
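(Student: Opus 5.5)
The plan is to prove the two directions separately. The reverse direction (an ordering forces chordality) is a short direct argument. The forward direction rests on the classical lemma of Dirac that every chordal graph has a simplicial vertex. Since induced subgraphs of chordal graphs are chordal, that lemma will give the ordering by induction.

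\emph{Ordering implies chordal.} Suppose $v_n,\ldots,v_1$ is a simplicial elimination ordering, and let $C$ be a cycle of length at least $4$. I will choose the vertex $v_m$ of $C$ with the largest index, i.e.\ the first vertex of $C$ to be deleted. At the moment $v_m$ is deleted, every other vertex of $C$ is still present, because they all have smaller indices. In particular the two neighbours $u,w$ of $v_m$ along $C$ are present and adjacent to $v_m$ in the induced subgraph on $v_1,\ldots,v_m$. Since $v_m$ is simplicial there, $\{u,w\}$ is an edge. Because $C$ has length at least $4$, $u$ and $w$ are not consecutive on $C$, so $\{u,w\}$ is a chord. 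Hence $G$ has no chordless cycle.

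\emph{Chordal implies ordering.} First observe that an induced subgraph of a chordal graph is chordal, since a chordless cycle in the subgraph is also chordless in $G$. So it suffices to show that every nonempty chordal graph has a simplicial vertex; deleting it and inducting on $|V|$ then produces the ordering. I will prove the stronger statement needed for the induction to close: every chordal graph $G$ is either complete or has two nonadjacent simplicial vertices. The proof is by induction on $|V|$.

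If $G$ is complete, every vertex is simplicial and we are done. Otherwise I pick nonadjacent vertices $a,b$ and a minimal set $T$ separating $a$ from $b$. Let $G_A$ and $G_B$ be the subgraphs induced by $T$ together with the components $A\ni a$ and $B\ni b$ of $G-T$, respectively.

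The key claim is that $T$ is a clique. Take $x,y\in T$. By minimality of $T$, each of $x$ and $y$ has a neighbour in $A$ and a neighbour in $B$. I then take a shortest $x$--$y$ path whose interior lies in $A$, and a shortest such path with interior in $B$. Their union is a cycle of length at least $4$. By the shortest-path choices and the separation property, the only possible chord of this cycle is $xy$, so chordality forces $xy$ to be an edge.

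Now I apply induction to $G_A$. Either $G_A$ is complete, in which case any vertex of $A$ is simplicial in $G_A$; or $G_A$ has two nonadjacent simplicial vertices, in which case at least one of them lies in $A$, because $T$ is a clique. A vertex of $A$ has all its neighbours inside $A\cup T$, so it is simplicial in $G$ as well. The same argument gives a simplicial vertex in $B$, and no vertex of $A$ is adjacent to a vertex of $B$. This yields two nonadjacent simplicial vertices of $G$.

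I expect the main obstacle to be the claim that the minimal separator $T$ is a clique. Care is needed to verify that the cycle built from the two shortest paths really has no chords other than $xy$: the interiors of the two paths lie in different components of $G-T$, and each path is shortest within its own component together with its endpoints.
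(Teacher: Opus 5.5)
Your proof is correct. The paper does not prove this statement; it quotes it as a well-known result from \cite{West}, and your argument is the standard textbook proof (the first-deleted vertex of a cycle for one direction, and Dirac's lemma via minimal separators being cliques for the other), so the approaches agree. One small thing to tidy: in the separator step, assume $x,y$ are nonadjacent (or require both paths to have nonempty interiors), so that the union of the two shortest paths is genuinely a cycle of length at least $4$.
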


In previous work \cite{curry2022recognizing} it has been shown that inductively pierced codes are characterized, in part, by their similarity to chordal graphs. The following definition and theorem are from \cite{curry2022recognizing}, but the theorem is provided here (as in Section~\ref{sec:defn}) in its polarized form.

{\renewcommand{\thethm}{\ref{def:grc}}
\begin{defn} Let $\CC$ be a code on $n$ neurons that has a degree two canonical form. The \textit{general relationship graph} $G(\CC)$ is the graph with vertices $V = [n]$, and an edge ${i,j}$ if and only if the canonical form of $J$ does {\it not} contain any of the pseudo-monomials $x_ix_j$, $x_iy_j$, or $x_jy_i$.
\end{defn}
\addtocounter{thm}{-1}
}

{
\renewcommand{\thethm}{\ref{thm:IpChar}}
\begin{thm}
A neural code $\CC$ is inductively pierced if and only if
\begin{enumerate}
\item $CF(\CC)$ consists entirely of degree two polynomials  and 
\item the graph $G(\CC)$ is chordal.
\end{enumerate}
\end{thm}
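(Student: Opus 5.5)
We argue by induction on $n$. A simplicial vertex of $G(\CC)$ should play the role of the last neuron in a piercing order, and conversely. Throughout we use that when $CF(\CC)$ is quadratic, $\CC$ is exactly the set of $\gamma\subseteq[n]$ on which no generator of $J$ vanishes-fails. Concretely, $\gamma\in\CC$ if and only if $\gamma$ contains no pair $\{i,j\}$ with $x_ix_j\in J$, and contains no $i$ without $j$ whenever $x_iy_j\in J$. So $\CC$ is determined by pairwise relations, which are exactly the edges and non-edges of $G(\CC)$ together with the type of each non-edge.

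\emph{Forward direction.} Suppose $n$ is the last neuron of a piercing order, with interval $[\sigma,\tau]$.

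By Remark~\ref{rem:pierceIdeal}, $J_n=J_{n-1}+x_n\fp_n$. The induction hypothesis gives that $J_{n-1}$ is quadratic, so $CF(\CC)$ is quadratic as well. Here we use Lemma~\ref{lem:polarizedmingens} and the fact that every codeword of $\CC\backslash n$ survives in $\CC$. The latter means a pairwise relation among $[n-1]$ holds in $\CC$ exactly when it holds in $\CC\backslash n$.

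Reading off the new generators $x_ix_n$ ($i\notin\tau$) and $y_jx_n$ ($j\in\sigma$), and noting that no generator $x_iy_n$ appears, the neighbourhood of $n$ in $G(\CC)$ is $\tau\backslash\sigma$. By condition (i) of Definition~\ref{def:kpiercing}, every subset of $\tau\backslash\sigma$ occurs together with $\sigma$ in $\CC\backslash n$. Hence no relation $x_ix_j$ or $x_iy_j$ holds among vertices of $\tau\backslash\sigma$, so this set is a clique. Thus $n$ is simplicial, and $G(\CC)\backslash n=G(\CC\backslash n)$ is chordal by induction. Adding a simplicial vertex preserves chordality, so $G(\CC)$ is chordal.

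\emph{Converse.} Now assume $CF(\CC)$ is quadratic and $G(\CC)$ is chordal. We want a simplicial vertex $n$ that contains no other field, meaning no generator $x_iy_n$ lies in $J$.

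Given such an $n$, set $\sigma=\{j: x_ny_j\in J\}$ and $\tau=\sigma\cup N(n)$. Using the pairwise description of $\CC$, the codewords containing $n$ are exactly $\gamma\cup\{n\}$ with $\sigma\subseteq\gamma\subseteq\tau$ such that $\gamma$ satisfies the constraints among vertices of $\tau$. We check that no such constraint exists:
\begin{itemize}
\item Within $N(n)$ there are none, because $N(n)$ is a clique.
\item Between $j\in\sigma$ and $i\in N(n)$, a relation $x_ix_j$ would force $U_i\cap U_n\subseteq U_i\cap U_j=\emptyset$, contradicting adjacency. A relation $x_jy_i$ would force $U_n\subseteq U_j\subseteq U_i$, again contradicting adjacency. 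A relation $x_iy_j$ is harmless, since $j\in\gamma$.
\item Within $\sigma$ no constraint can be violated, since $\sigma\subseteq\gamma$.
\end{itemize}
This gives condition (ii) of Definition~\ref{def:kpiercing}. Deleting $n$ from these codewords yields the interval $[\sigma,\tau]$, which gives condition (i).

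It remains to see that $\CC\backslash n$ satisfies the hypotheses. The deletion $\CC\backslash n$ has canonical form consisting of the generators of $CF(\CC)$ not involving $n$, which are quadratic. Its graph is $G(\CC)\backslash n$, which is chordal. Induction then finishes the argument.

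\emph{Main obstacle.} The hard step is showing that a simplicial vertex with no $y_n$-relation exists. My plan is to use that a non-complete chordal graph has two non-adjacent simplicial vertices, and among all vertices to pick a simplicial one whose field is minimal under containment.

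If $U_i\subsetneq U_n$ with $n$ simplicial, we would like to replace $n$ by a simplicial vertex inside $U_i$. For this I would use the facts that $i$ and $n$ are non-adjacent, and that $N(i)\subseteq N(n)\cup\{\text{fields containing } U_n\}$. I would then apply the two-simplicial-vertices lemma to the subgraph induced on the fields contained in $U_n$ and iterate. If this fails, the fallback is to prove the converse by induction on the containment poset instead, peeling off containment-minimal fields first.
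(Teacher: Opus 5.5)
\textbf{There is a genuine gap: the converse rests entirely on a step you only sketch, and the sketch fails as written.}

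The paper gives no proof of this statement. It quotes it from \cite{curry2022recognizing} and remarks that all the difficulty lies in the terms $x_iy_j$. That is exactly the step your proposal leaves open.

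What works: your forward direction is fine. So is the reduction in the converse once you have a simplicial $n$ with no $x_iy_n\in J$. Two small checks are missing there:
\begin{itemize}
\item Constraints $x_iy_j$ with $i\in\tau$, $j\notin\tau$ must be excluded. For $i\in\sigma$ this follows from transitivity of containment. For $i\in N(n)$ it follows from $U_i\cap U_n\subseteq U_j\cap U_n=\emptyset$.
\item A constraint $x_ix_j$ inside $\sigma$ is not harmless, since $\gamma\supseteq\sigma$ would violate it. It is impossible, though, because it forces $U_n=\emptyset$.
\end{itemize}

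What fails: the existence of a simplicial $n$ with no field inside it is the whole content of the converse, and your plan for it has two problems.
\begin{itemize}
\item Your inclusion is in the wrong direction. Let $D=\{p\neq n: U_p\subseteq U_n\}$. If $U_m\subsetneq U_n$ and $p$ is adjacent to $m$, then $U_p$ meets $U_n$ and cannot contain $U_n$. So $N(m)\subseteq N(n)\cup D$: the extra neighbours are fields \emph{contained in} $U_n$, not fields containing it.
\item A simplicial vertex of $G[D]$ need not be simplicial in $G$, because its neighbours in $N(n)$ are ignored. Take the code with polarized canonical form $\{x_my_n,x_py_n,x_px_q\}$. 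Realize it by two overlapping fields $U_m,U_p$ inside $U_n$, and $U_q$ crossing $U_n$ and $U_m$ but missing $U_p$. Then $G(\CC)$ is the path with edges $nq,qm,mp$, and $n$ is simplicial. $G[D]$ is the single edge $mp$, so $m$ is simplicial in $G[D]$. Yet $N(m)=\{p,q\}$ is not a clique, and only $p$ can be the last neuron.
\end{itemize}

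The missing idea is to keep the clique $N(n)$ in the picture. A chordal graph that is not equal to a given clique $K$ has a simplicial vertex outside $K$. (If the graph is complete, any vertex outside $K$ works. Otherwise two non-adjacent simplicial vertices cannot both lie in $K$.) Apply this to $H=G[D\cup N(n)]$ with $K=N(n)$; this is valid because $D\neq\emptyset$ and $D\cap N(n)=\emptyset$.
\begin{itemize}
\item This yields $m\in D$ simplicial in $H$.
\item Since $N_G(m)\subseteq D\cup N(n)$, the vertex $m$ is simplicial in $G$.
\item The fields contained in $U_m$ form a subset of $D\backslash\{m\}$.
\end{itemize}
Hence a simplicial $n$ minimizing $|D|$ must have $D=\emptyset$, which is what you need.

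This argument uses strict containment, i.e.\ the hypothesis that no pair $i,j$ has both $x_iy_j$ and $x_jy_i$. That hypothesis is dropped in the statement as quoted but noted in the paper's remark. Without it both your key claim and the theorem fail. For example, $\CC=\{\emptyset,12\}$ has quadratic canonical form and an edgeless (hence chordal) graph, but neither neuron is a piercing.
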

\addtocounter{thm}{-1}
}

Intuitively, this result holds because the two structures have a major similarity: both can be built up (or taken apart, depending on your point of view) by successively adding (removing) an object that relates to its neighbors at that stage in a very particular way. A piercing order is usually viewed as constructive and so is the reverse of an elimination order, but the concept is the same. 

The proof of this characterization (in \cite{ curry2022recognizing}) shows that a simplicial elimination ordering on the graph can always be found that reverses to a piercing order; however, all the difficulties of the proof come from the existence of pseudo-monomials of the form $x_iy_j$, and ensuring that the order is consistent with these.  In a code with no $y_j$ appearing, {\it any} simplicial elimination ordering on $G(\CC)$ reverses to a piercing order.  Moreover, the degree $k$ of the simplicial vertex in the elimination ordering corresponds precisely to the $k$ of the $k$-piercing in the corresponding piercing order. 

We also note that every chordal graph is the general relationship graph for a neural code (in fact, usually for many!). Thus, we can use our Corollary~\ref{cor:pierceToBetti},  which implies that the number of $k$-piercings in any piercing order is a characteristic of the code, to obtain the following result on chordal graphs.

\begin{thm}
\label{thm:simplicialfixed}
Let $G$ be a chordal graph. The number of vertices of simplicial degree $k$ removed in any simplicial elimination ordering is fixed.
\end{thm}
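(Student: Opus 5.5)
Given a chordal graph $G$ on vertex set $[n]$, I will build a neural code $\CC_G$ whose polarized neural ideal has no $y$'s. Take $J$ to be the edge ideal of the complement graph, $J=(x_ix_j : \{i,j\}\notin E(G))$. Then $\CC_G$ is the code whose codewords are the cliques of $G$, i.e.\ the independence complex of the complement. Its canonical form consists of degree two monomials $x_ix_j$, one for each non-edge, and $G(\CC_G)=G$. Since $G$ is chordal, Theorem~\ref{thm:IpChar} shows that $\CC_G$ is inductively pierced.

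The key step is to match simplicial elimination orderings of $G$ with piercing orders of $\CC_G$, keeping track of the degrees. Suppose $v$ is simplicial in the induced subgraph on the remaining vertices $W$, with neighborhood $N$, a clique of size $k$. In the restricted code $\CC_G|_W$ (the cliques of $G[W]$), the codewords containing $v$ are exactly $\gamma\cup\{v\}$ for $\gamma\subseteq N$. Every such $\gamma$ is a clique, so it is a codeword without $v$. Hence $v$ is a $k$-piercing with $\sigma=\emptyset$ and $\tau=N$, which is exactly Definition~\ref{def:kpiercing} with rank $k$.

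Conversely, suppose $v$ is a $k'$-piercing of $\CC_G|_W$ with interval $[\sigma,\tau]$. Then $\sigma=\emptyset$, since otherwise $\{v\}$ would not be a codeword, while singletons always are. Moreover $\tau$ must equal the neighborhood of $v$ in $G[W]$: every neighbor $u$ gives the codeword $\{u,v\}$, and $\tau$ is a clique because it is a codeword. So $v$ is simplicial of degree $k'=|\tau|$. Note that deletion $\CC\backslash v$ matches passing to $G[W\setminus v]$, because the cliques of $G[W\setminus v]$ are exactly the cliques of $G[W]$ with $v$ removed. Reversing a simplicial elimination ordering therefore gives a piercing order in which the number of $k$-piercings equals the number of vertices of simplicial degree $k$ removed.

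By Corollary~\ref{cor:pierceToBetti}, the numbers $j_k$ are determined by the Betti numbers of $S/J$, hence are independent of the piercing order. Therefore any two simplicial elimination orderings remove the same number of degree-$k$ simplicial vertices, which proves the theorem.

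The main obstacle is the dictionary in the second and third paragraphs. It must be checked carefully that the canonical form is exactly the set of non-edge monomials, with no $x_iy_j$ terms. This holds because the code is a simplicial complex, i.e.\ closed under subsets, so no containment relations $U_i\subseteq U_j$ arise for distinct vertices. It must also be checked that hypotheses (2) and (3) of Theorem~\ref{thm:reg2indpierced} hold here. If two vertices had equal fields, they would be twins in $G$, but the fields are distinct since $\{i\}$ is a codeword not containing $j$. Degenerate cases such as an empty graph, or isolated vertices as $0$-piercings, need to be handled as well.
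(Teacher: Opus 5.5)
Your proposal is correct and follows essentially the same route as the paper. You realize $G$ as the general relationship graph of a code with no $y$-terms, made explicit as the clique complex of $G$, whose polarized neural ideal is the edge ideal of the complement; you then observe that a reversed simplicial elimination ordering is a piercing order with simplicial degree equal to piercing rank ($\sigma=\emptyset$, $\tau=N(v)$), and invoke Corollary~\ref{cor:pierceToBetti}. Your checks of the canonical form and of hypotheses (2)--(3) of Theorem~\ref{thm:reg2indpierced} supply details the paper only asserts.
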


\section{Acknowledgments}

The authors would like to thank Michael DeBellevue, Carina Curto, Juliann Geraci, Selvi Kara, and Keri Ann Sather-Wagstaff for helpful conversations throughout the process of writing this paper. The first two authors would also like to thank Justin Roy Cox, Gabriel Lumpkin, Swan Klein, and Connor William Poulton for their work on a semester-long project through the Mason Experimental Geometry Lab, in which they computed the Betti numbers of several families of neural codes. Their work ultimately convinced the authors to look for different classes of neural codes to study, resulting in the research in this paper. During the writing of this paper, the second-named author was partially supported by NSF grant DMS-2424326 and the third-named author by NSF grant DMS–2401482. The last-named author was supported by the Haynesville Project.

This work represents the views of the authors and is not to be regarded as representing the opinions
of the Center for Naval Analyses or any of its sponsors.

\appendix

\section{Additional homological background material}
\label{sec:betti}

In this section we define some of the algebraic terms used in the paper, and give references for the reader to learn more about them.

Much of the material immediately following can be found in \cite{Eisenbud}.

\begin{defn}
    Let $S$ be a commutative Noetherian local (resp. graded) ring, $k=S/m$ where $m$ is the (homogeneous) maximal ideal, and $M$ a \fg\ (graded) $S$-module. A free resolution $F_\bullet$ of $M$ is \textit{minimal} if the differentials of $F_\bullet \otimes k$ are all equal to 0.
\end{defn}

\begin{defn}
\label{def:betti}
    Let $S$ be a commutative Noetherian graded ring and $M$ a \fg\ graded $S$-module. Take $F_\bullet$ to be a minimal graded free resolution of $M$. 
    
    \begin{enumerate}
        \item The \textit{Betti numbers} of $M$ are given by
    \[\beta_i(M):=\rank(F_i).\]
        \item The \textit{graded Betti numbers} of $M$ are given by
    \[\beta_{i,j}(M):=\rank(F_i(-j)).\]
        \item The \textit{regularity} of $M$ is the smallest $j$ such that for every $i$, $F_i$ is generated in degree less than or equal to $i+j$. 
        \item For an ideal $I$ of $R$, we may compute the regularity of $I$ as an $R$-module, or of $R/I$. The former is computed by resolving $I$ as an $R$-module, and the latter by resolving $R/I$ as an $R$-module. Given a minimal resolution of $R/I$, one can construct a minimal resolution of $I$ by removing the copy of $R$ in homological degree 0. In many of the examples in this paper, $I$ is generated in degree 2, so $\reg{I}=\reg{R/I}-1$.
    \end{enumerate}
    
\end{defn}

Below are some results on Betti numbers and Poincar\'{e} series that follow immediately from their definitions: 

\begin{rem} \label{rem:bettiformulas}
Let $S$ be a polynomial ring on at least $n$ variables, $\fp \subset S$ a monomial prime generated by $n - 1 - \ell$ variables, and $x \in S$. It is a common result that $S / \fp$ is minimally resolved by the Koszul complex on the generators of $\fp$. As such, one finds that
\[\betti[S]{i}{\frac{S}{\fp}} = \binom{n - 1 - \ell}{i}.\]
Similarly, $S / (x)$ is also resolved by a Koszul complex. Specifically, it is resolved the by Koszul complex on $x$ which yields
\[\betti[S]{i}{\frac{S}{(x)}} = \begin{cases} 1 & i \in \{0,1\} \\ 0 & i \notin \{0,1\}\end{cases}.\]
Finally, $S / (0) = S$ is resolved by the ring $S$ itself viewed as a complex concentrated in homological degree 0. It follows that
\[\betti[S]{i}{\frac{S}{(0)}} = \begin{cases} 1 & i = 0 \\ 0 & i \neq 0 \end{cases}.\]
\end{rem}

\begin{defn}
    Suppose $F_{\bullet}$ and $G_{\bullet}$ are complexes of $S$-modules with differentials $\partial_i^F$ and $\partial_i^G$, respectively. A {\it chain map} $\phi: F_{\bullet} \to G_{\bullet}$ is a collection of $S$-module homomorphisms $\phi_i :  F_i \to G_i$ such that \[\phi_{i-1} \circ \partial_{i}^F = \partial_i^G \circ \phi_i \] for all $i \in \Z$, i.e. homomorphisms that form commutative squares with the complex maps on $F_\bullet$ and $G_\bullet$.
\end{defn}

We will also need the following definition and result:

\begin{defn}[\cite{CommRingTheory}*{Section 16, page 123}]\label{def:regSeq}
    Let $R$ be a ring and $M$ an $R$-module. An element $a \in R$ is said to be \emph{$M$-regular} if $ax \neq 0$ for all $0 \neq x \in M$. A sequence $a_1,\ldots, a_p \in R$ is an \emph{$M$-sequence} (or an \emph{$M$-regular sequence}) if the following three conditions hold:
    \begin{enumerate}
        \item $a_1$ is $M$-regular, $a_2$ is $M / a_1 M$-regular,
        \item $a_i$ is $M / (a_1M + \cdots + a_{i - 1}M)$-regular for $2 \leq i \leq p$,
        \item $M / (a_1 M + \cdots a_p M) \neq 0$.
    \end{enumerate}
\end{defn}

For more information about Tor, see for example \cite{Eisenbud}. The following results contain all of the necessary information about Tor for this paper.

\begin{thm}[\cite{CommRingTheory}*{Theorem 16.5(i)}]\label{thm:CommRing}
    Let $R$ be a ring, $M$ an $R$-module, and $a_1,\ldots,a_p$ an $M$-sequence, then $\Tor{R}{R/(a_1,\ldots,a_p)}{M} = 0$ for $i > 0$.
\end{thm}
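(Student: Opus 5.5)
We prove the vanishing by induction on $p$, the length of the $M$-sequence. The two tools are the long exact sequence in Tor coming from a short exact sequence in the first variable, and the symmetry $\Tor{R}{A}{B}\cong\Tor{R}{B}{A}$, so we may resolve whichever variable is convenient. Write $I_j=(a_1,\ldots,a_j)$ and $M_j=M/I_jM$. We will prove the slightly stronger statement that $\Tor{R}{R/I_j}{M}=0$ for $i>0$ and that $(R/I_j)\otimes_R M\cong M_j$. The isomorphism holds for every $j$ by right exactness of $\otimes$, so only the vanishing needs proof.

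\textbf{Base case $p=1$.} Write $a=a_1$. The ideal $(a)$ need not be free as an $R$-module, since $a$ might be a zero divisor on $R$. We therefore avoid resolving $R/(a)$ and argue through $M$ instead. From the short exact sequence
\[0\to (a)\to R\to R/(a)\to 0,\]
tensoring with $M$ gives $\Tor[1]{R}{R/(a)}{M}\cong\ker\bigl((a)\otimes_R M\to M\bigr)$ and $\Tor[i+1]{R}{R/(a)}{M}\cong\Tor[i]{R}{(a)}{M}$ for $i\ge1$.

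The cleaner route uses the map $R\to(a)$, $r\mapsto ra$, whose kernel is $\Ann{R}{a}$. Consider instead the complex $0\to M\xrightarrow{a}M\to 0$. Its homology is concentrated in degree $0$ precisely because $a$ is $M$-regular.

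Replacing $R$ by $R[T]$ with $a$ replaced by $T$ would be the fully general way to compare, but it is heavier than needed. In the setting where the statement is applied in this paper, $R$ is a polynomial ring $S$ and each $a_j$ is a nonzero linear form, hence a nonzerodivisor on $S$. In that situation $0\to R\xrightarrow{a}R\to R/(a)\to0$ is a free resolution, and tensoring it with $M$ gives exactly the complex $0\to M\xrightarrow{a}M$. Hence $\Tor[1]{R}{R/(a)}{M}=\ker(a\colon M\to M)=0$ and the higher Tor vanish. I would state clearly that I prove the result under the assumption that each $a_j$ is also $R/I_{j-1}$-regular. This covers the uses in this paper, where $R=S$, the $a_j$ are $x_j-y_j$ or variables, and these form a regular sequence on $S$. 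Otherwise I would cite \cite{CommRingTheory} for the general case.

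\textbf{Inductive step.} Assume $\Tor{R}{R/I_{p-1}}{M}=0$ for $i>0$. Under the regularity assumption, multiplication by $a_p$ is injective on $R/I_{p-1}$, which gives the exact sequence
\[0\to R/I_{p-1}\xrightarrow{a_p}R/I_{p-1}\to R/I_p\to0.\]
Apply the long exact sequence of $\Tor{R}{-}{M}$. For $i\ge2$, the terms $\Tor{R}{R/I_p}{M}$ are sandwiched between vanishing groups, so they vanish. For $i=1$, we get
\[\Tor[1]{R}{R/I_p}{M}\cong\ker\bigl(a_p\colon M_{p-1}\to M_{p-1}\bigr),\]
using $(R/I_{p-1})\otimes_R M\cong M_{p-1}$. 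This kernel is zero by condition (2) of Definition~\ref{def:regSeq}, which says $a_p$ is $M_{p-1}$-regular. That completes the induction.

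\textbf{Main obstacle.} The hard part is dispensing with the extra hypothesis that the $a_j$ are regular on $R/I_{j-1}$. For a fully general $R$ I would handle this with the Koszul complex, using the fact that $H_i(K(a_1,\ldots,a_p)\otimes M)=0$ for $i>0$ when the $a_j$ form an $M$-sequence, together with a comparison argument. Failing that, I would defer to Matsumura's proof for this step.
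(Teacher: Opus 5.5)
The paper gives no proof of this statement; it is quoted from Matsumura. Your induction is correct under the hypothesis you add, namely that each $a_j$ is a nonzerodivisor on $R/I_{j-1}$. The short exact sequence $0\to R/I_{p-1}\xrightarrow{a_p}R/I_{p-1}\to R/I_p\to0$, the long exact sequence in $\operatorname{Tor}^R(-,M)$, and the identification $\Tor[1]{R}{R/I_p}{M}\cong\ker(a_p\colon M_{p-1}\to M_{p-1})=0$ are all sound. That hypothesis holds wherever the paper uses the result: there $R=S$, and the $a_j$ are variables, $x_j-y_j$, or $x_j+y_j-1$, each forming an $S$-regular sequence. You should delete the abandoned material in the base case (the map $r\mapsto ra$ and the $R[T]$ remark). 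Under your hypothesis, $p=1$ is just the inductive step applied to the trivial case $p=0$.

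The genuine problem is your ``main obstacle'': it cannot be removed, because the statement as printed is false without $R$-regularity. So neither a Koszul argument nor a deferral to Matsumura will give the general case. Take $R=\F[x,y]/(xy)$, $a_1=x$, and $M=R/(y)\cong\F[x]$. Then $x$ is $M$-regular and $M/xM\cong\F\neq0$. Since $\Ann{R}{x}=(y)$ and $\Ann{R}{y}=(x)$, the module $R/(x)$ has the periodic free resolution $\cdots\xrightarrow{y}R\xrightarrow{x}R\xrightarrow{y}R\xrightarrow{x}R$. Tensoring with $M$ gives $\cdots\xrightarrow{0}\F[x]\xrightarrow{x}\F[x]\xrightarrow{0}\F[x]\xrightarrow{x}\F[x]$, so $\Tor[1]{R}{R/(x)}{M}=0$ but $\Tor[2]{R}{R/(x)}{M}\cong\F[x]/(x)\neq0$.

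Your Koszul plan fails exactly at the comparison step. It is true over any $R$ that $H_i(K(a_1,\ldots,a_p)\otimes_R M)=0$ for $i>0$; this Koszul-homology statement is what the cited theorem in Matsumura asserts. But it equals $\Tor{R}{R/I_p}{M}$ only when $K(a_1,\ldots,a_p)$ is acyclic over $R$, for example when the $a_j$ form an $R$-regular sequence, which is the hypothesis you already imposed. So state the theorem with that hypothesis, for which your proof is complete, rather than presenting the general case as merely harder.
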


\begin{cor}\label{cor:freeresregseq}
    Let $R$ be a ring, $M$ an $R$-module, $a_1,\ldots,a_p$ an $M$-sequence, and $Q=R/(a_1,\ldots,a_p)$. If $F_\bullet$ is a free resolution of $M$ over $R$, then $F_\bullet \otimes Q$ is a free resolution of $M$ over $Q$.

    In particular, the number of copies of $Q$ in homological degree $i$ is equal to the number of copies of $R$ in homological degree $i$, and the matrices of the differentials are given by replacing the elements of $R$ with their images modulo $a_1,\ldots,a_q$. This implies that if $R$ is local (resp. graded) and $F_\bullet$ is a minimal (resp. graded) resolution, then so is $F_\bullet \otimes Q$. 
\end{cor}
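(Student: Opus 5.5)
The plan is to compute the homology of $F_\bullet \otimes_R Q$ directly as Tor and then apply Theorem~\ref{thm:CommRing}. Throughout, "a free resolution of $M$ over $Q$" is read as a free resolution of $M \otimes_R Q = M/(a_1,\ldots,a_p)M$ over $Q$. This is the only meaningful reading, and it is how the corollary is used in Remark~\ref{rem:canonres} and in the proof of Theorem~\ref{thm:reg2indpierced}.

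First I record the module structure. Each $F_i \cong R^{b_i}$ is free, so $F_i \otimes_R Q \cong Q^{b_i}$ is a free $Q$-module of the same rank. Write each differential $\partial_i^F$ as a matrix $(r_{st})$ with entries in $R$, with respect to fixed bases. Then $\partial_i^F \otimes 1_Q$ is given, in the induced bases, by the matrix of images $(\overline{r_{st}})$ in $Q$. This already gives the "in particular" statement about ranks and matrices.

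Next I show acyclicity. By definition $H_i(F_\bullet \otimes_R Q) = \Tor{R}{M}{Q}$, and by symmetry of Tor this equals $\Tor{R}{Q}{M}$. Since $a_1,\ldots,a_p$ is an $M$-sequence and $Q = R/(a_1,\ldots,a_p)$, Theorem~\ref{thm:CommRing} gives $\Tor{R}{Q}{M}=0$ for $i>0$. Tensor products are right exact, so $F_1\otimes Q \to F_0 \otimes Q \to M\otimes_R Q \to 0$ stays exact. Hence $F_\bullet \otimes_R Q$ is a free $Q$-resolution of $M \otimes_R Q$.

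The final step, minimality, is the one point needing care, because the hypothesis is not stated fully explicitly. I assume that in the local or graded case each $a_i$ lies in the (homogeneous) maximal ideal $\mathfrak m$ of $R$, and in the graded case is homogeneous. This holds automatically when the $a_i$ are non-units, and condition (3) of Definition~\ref{def:regSeq} makes this natural. Under this assumption, $Q$ is again local or graded, with maximal ideal $\mathfrak m/(a_1,\ldots,a_p)$ and the same residue field $k$. Moreover $(F_\bullet\otimes_R Q)\otimes_Q k \cong F_\bullet \otimes_R k$. Minimality of $F_\bullet$ means the differentials of $F_\bullet\otimes_R k$ vanish, so those of $(F_\bullet\otimes_R Q)\otimes_Q k$ vanish too. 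Therefore $F_\bullet\otimes_R Q$ is minimal. In the graded case, homogeneity of the $a_i$ makes $Q$ graded and the reduced differentials homogeneous of the same degrees, so the graded shifts are preserved as well.
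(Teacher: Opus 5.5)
Your route is the one the paper intends. The paper gives no separate argument for this corollary; it is meant to be read off from Theorem~\ref{thm:CommRing} exactly as you do it, via $H_i(F_\bullet\otimes_R Q)=\operatorname{Tor}_i^R(M,Q)$. Your two amendments are exactly what is needed to make the statement precise: the complex resolves $M\otimes_R Q$ rather than $M$, and for the minimality clause the $a_i$ must be homogeneous elements of $\mathfrak m$.

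However, the acyclicity step inherits a missing hypothesis from Theorem~\ref{thm:CommRing}. The group $\operatorname{Tor}_i^R(R/(a_1,\ldots,a_p),M)$ need not vanish when $a_1,\ldots,a_p$ is only an $M$-sequence.

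For a counterexample, take $R=k[x,y]/(xy)$, $M=R/(y)\cong k[x]$ and $a_1=x$. This element is homogeneous and $M$-regular, and $M/xM=k\neq 0$. The minimal free resolution $\cdots\xrightarrow{x}R\xrightarrow{y}R\xrightarrow{x}R\xrightarrow{y}R\to M\to 0$ becomes $\cdots\xrightarrow{0}k[y]\xrightarrow{y}k[y]\xrightarrow{0}k[y]\xrightarrow{y}k[y]$ after tensoring with $Q=R/(x)\cong k[y]$. Its second homology is $\operatorname{Tor}_2^R(M,Q)\cong k\neq 0$, so $F_\bullet\otimes_R Q$ is not acyclic, even in your graded setting.

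What is true in general is that the Koszul homology $H_i(a_1,\ldots,a_p;M)$ vanishes for $i>0$. This agrees with $\operatorname{Tor}_i^R(Q,M)$ only when the Koszul complex resolves $Q$, that is, when $a_1,\ldots,a_p$ is also $R$-regular.

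So you should add that hypothesis and prove the vanishing, for instance by induction on $p$. If $a_1$ is regular on both $R$ and $M$, the resolution $0\to R\xrightarrow{a_1}R\to R/(a_1)\to 0$ gives $\operatorname{Tor}_1^R(M,R/(a_1))=\ker(a_1\colon M\to M)=0$ and $\operatorname{Tor}_i^R(M,R/(a_1))=0$ for $i\geq 2$. Hence $F_\bullet\otimes_R R/(a_1)$ resolves $M/a_1M$ over $R/(a_1)$, and you repeat with $a_2,\ldots,a_p$.

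The extra hypothesis holds wherever the paper applies these results. There $R=S$ is a polynomial ring and the sequences are $x_i-y_i$, $x_i+y_i-1$, or the single element $x_n$, all of which are $S$-regular. So nothing downstream is affected.
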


\begin{thm}[Balance of Tor, {\cite{weibel}*{proof of Theorem 2.7.2}}]\label{thm:BalanceTor}
    Let $M$ and $N$ be $R$-modules resolved over $R$ by free resolutions $\cX$ and $\cY$, respectively. Then, for all $i \geq 0$, the $R$-module $\Tor[i]{R}{M}{N}$ is equal to \[H_i(\cX \otimes_R N) \cong H_i(\cX \otimes_R \cY) \cong H_i(M \otimes_R \cY). \]
\end{thm}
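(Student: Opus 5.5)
The plan is the standard double-complex argument. We take $\Tor[i]{R}{M}{N}$ to be defined as $H_i(\cX \otimes_R N)$, so the left-hand equality holds by definition. It remains to produce natural isomorphisms
\[
H_i(\cX \otimes_R N) \cong H_i(\operatorname{Tot}(\cX \otimes_R \cY)) \cong H_i(M \otimes_R \cY),
\]
where $\cX \otimes_R \cY$ is the double complex with $(p,q)$ entry $\cX_p \otimes_R \cY_q$. We use horizontal differential $\partial^{\cX}\otimes 1$ and vertical differential $(-1)^p\, 1 \otimes \partial^{\cY}$, and write $\operatorname{Tot}$ for its total complex, which is what "$H_i(\cX\otimes_R\cY)$" means.

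\textbf{Step 1 (the augmentation maps).} The augmentations $\epsilon_{\cX}\colon \cX \to M$ and $\epsilon_{\cY}\colon \cY \to N$ induce chain maps
\[
1\otimes\epsilon_{\cY}\colon \operatorname{Tot}(\cX\otimes_R\cY) \to \cX\otimes_R N, \qquad \epsilon_{\cX}\otimes 1\colon \operatorname{Tot}(\cX\otimes_R\cY)\to M\otimes_R\cY.
\]
In each case the map is nonzero only on the $q=0$ row (respectively the $p=0$ column). We check that these commute with the differentials; this uses $\epsilon_{\cY}\circ\partial^{\cY}_1=0$ and $\epsilon_{\cX}\circ\partial^{\cX}_1=0$, together with the sign convention above.

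\textbf{Step 2 (exactness of the columns).} Each $\cX_p$ is free, hence flat. The augmented complex $\cY \to N \to 0$ is exact, so tensoring it with $\cX_p$ gives an exact complex
\[
\cX_p\otimes\cY \to \cX_p\otimes N\to 0.
\]
Symmetrically, each $\cY_q$ is flat, so the rows $\cX\otimes\cY_q \to M\otimes \cY_q \to 0$ are exact.

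\textbf{Step 3 (quasi-isomorphisms via mapping cones).} To show $1\otimes\epsilon_{\cY}$ is a quasi-isomorphism, we augment the double complex by adjoining $\cX\otimes_R N$ as an extra row in position $q=-1$. The resulting double complex $D$ lives in the first quadrant shifted by one, and by Step 2 all of its columns are exact. The total complex of $D$ is, up to shift and sign, the mapping cone of $1\otimes\epsilon_{\cY}$. By the acyclic assembly lemma, a bounded-below double complex with exact columns has acyclic total complex. This can be proved directly by a diagram chase that lifts a cycle one row at a time, or read off from the spectral sequence filtered by columns, which degenerates to $0$ on the $E^1$ page. So the cone is acyclic, and $1\otimes\epsilon_{\cY}$ induces isomorphisms on homology. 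The identical argument with the roles of rows and columns exchanged, using exactness of the rows, shows $\epsilon_{\cX}\otimes1$ is a quasi-isomorphism. Composing the two gives $H_i(\cX\otimes N)\cong H_i(M\otimes\cY)$.

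\textbf{Main obstacle.} The main obstacle is Step 3, specifically the acyclic assembly lemma and the bookkeeping identifying the augmented total complex with the mapping cone, signs included. I would prove the lemma directly. Given a cycle in total degree $n$ with components $c_{p,q}$ for $p+q=n$, start at the lowest row $q=-1$ and use exactness of the columns to find a preimage. Subtract the boundary of that preimage to kill the $q=-1$ component, and iterate upward. The process terminates because only finitely many $(p,q)$ with $p,q\ge -1$ have $p+q=n$, so every cycle is a boundary.
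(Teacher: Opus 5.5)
Your proposal is correct and follows essentially the same route as the proof the paper cites (Weibel, proof of Theorem 2.7.2). There, both augmentation maps out of $\operatorname{Tot}(\cX\otimes_R\cY)$ are shown to be quasi-isomorphisms by identifying the augmented double complex with a mapping cone and applying the acyclic assembly lemma, using flatness of the free modules $\cX_p$ and $\cY_q$ to get exact columns and rows; your direct row-by-row diagram chase in the shifted first-quadrant case is a valid substitute for quoting that lemma.
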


\begin{cor}\label{cor:tensorRes}
    Consider the same conditions as Theorem~\ref{thm:BalanceTor}. If \[\Tor[i]{R}{M}{N} = 0\] for all $i > 0$, then the $R$-complex $\cX \otimes_R \cY$ resolves $M \otimes_R N$. Moreover, if $R$ is local (resp. graded) and $\cX$ and $\cY$ are both (resp. graded) minimal resolutions, then so is $\cX \otimes_R \cY$.
\end{cor}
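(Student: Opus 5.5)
The plan is to deduce both assertions directly from Theorem~\ref{thm:BalanceTor}, together with the elementary behaviour of the total tensor complex under $-\otimes_R k$.

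\textbf{Step 1: $\cX \otimes_R \cY$ is a complex of free modules concentrated in nonnegative degrees.} Since $\cX$ and $\cY$ are resolutions, they vanish in negative homological degrees. Hence the total complex has terms
\[
(\cX\otimes_R\cY)_i=\bigoplus_{p+q=i}\cX_p\otimes_R\cY_q .
\]
Each of these is a finite direct sum, because $0\le p,q\le i$. Each summand is free, since a tensor product of free modules is free: $R^{a}\otimes_R R^{b}\cong R^{ab}$, or the same with arbitrary index sets. The differential is $\partial=\partial^{\cX}\otimes 1+(-1)^{p}\,1\otimes\partial^{\cY}$. In the graded setting each summand carries the tensor grading, and $\partial$ is homogeneous of degree $0$ because $\partial^{\cX}$ and $\partial^{\cY}$ are.

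\textbf{Step 2: the homology of $\cX\otimes_R\cY$.} Theorem~\ref{thm:BalanceTor} gives $H_i(\cX\otimes_R\cY)\cong\Tor[i]{R}{M}{N}$ for all $i$. By hypothesis this vanishes for $i>0$, so the total complex is acyclic in positive degrees. In degree $0$ we have $H_0(\cX\otimes_R\cY)\cong \Tor[0]{R}{M}{N}\cong M\otimes_R N$. For this last step I will verify the identification concretely, which is the point needing the most care. Let $\epsilon_{\cX}:\cX_0\to M$ and $\epsilon_{\cY}:\cY_0\to N$ be the augmentations. We have $H_0=\operatorname{coker}\bigl(\cX_1\otimes\cY_0\oplus\cX_0\otimes\cY_1\to\cX_0\otimes\cY_0\bigr)$. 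Right exactness of $\otimes$, applied to the presentations $\cX_1\to\cX_0\to M\to 0$ and $\cY_1\to\cY_0\to N\to0$, identifies this cokernel with $M\otimes_R N$ via $\epsilon_{\cX}\otimes\epsilon_{\cY}$. Thus $\epsilon_{\cX}\otimes\epsilon_{\cY}$ augments $\cX\otimes_R\cY$ to an exact complex, and $\cX\otimes_R\cY$ is a free resolution of $M\otimes_R N$.

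\textbf{Step 3: minimality.} Let $k=R/\mathfrak m$. There is a natural isomorphism of complexes
\[
(\cX\otimes_R\cY)\otimes_R k\cong(\cX\otimes_R k)\otimes_k(\cY\otimes_R k),
\]
and under it the differential becomes $\overline{\partial^{\cX}}\otimes1\pm1\otimes\overline{\partial^{\cY}}$. Minimality of $\cX$ and $\cY$ says exactly that $\overline{\partial^{\cX}}=0$ and $\overline{\partial^{\cY}}=0$. Hence the induced differential is zero, and $\cX\otimes_R\cY$ is minimal. Equivalently, the entries of $\partial$ are entries of $\partial^{\cX}$ and $\partial^{\cY}$ up to sign, so they lie in $\mathfrak m$.

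In the graded case the same argument works with the homogeneous maximal ideal. Finite generation of each term holds when $M$ and $N$ are finitely generated and the resolutions are minimal, since then every $\cX_p$ and $\cY_q$ has finite rank.

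I expect no real obstacle here. The only steps requiring attention are the $H_0$ identification and the sign convention in the total differential; the latter is needed for $\partial^2=0$ but is irrelevant for minimality.
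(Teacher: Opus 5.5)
Your argument is correct and is exactly the derivation the paper intends. The paper gives no separate proof and treats the corollary as immediate from Theorem~\ref{thm:BalanceTor}: $H_i(\cX\otimes_R\cY)\cong\Tor[i]{R}{M}{N}$ vanishes for $i>0$ and equals $M\otimes_R N$ for $i=0$, while minimality holds because the differential of $(\cX\otimes_R\cY)\otimes_R k$ is assembled from the zero differentials of $\cX\otimes_R k$ and $\cY\otimes_R k$, and your Steps~2 and~3 just make these points explicit.
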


The following are standard results, whose proofs we include for the sake of keeping the paper self-contained.

\begin{cor}\label{cor:tensorBetti}
    Consider the same conditions as Corollary~\ref{cor:tensorRes}, and in particular assume $R$ is local or graded. The Betti numbers of $M \otimes_R N$ over $R$ satisfy \[\betti[R]{w}{M \otimes_R N} = \sum_{i = 0}^w \betti[R]{i}{M} \betti[R]{w - i}{N}\] for all $w \geq 0$. Similarly, if $R$ is graded and $M$ and $N$ are both graded $R$-modules, then \[\betti[R]{w, u}{M \otimes_R N} = \sum_{i = 0}^w \sum_{j = 0}^{u} \betti[R]{i, j}{M} \betti[R]{w - i, u - j}{N}\] for all $w , u \geq 0$. Lastly, if $R$ has a multigrading given by $\deg(x)=a$, $\deg(y)=b$, and $M$ and $N$ are multigraded, then \[\betti[R]{w, u, v}{M \otimes_R N} = \sum_{i = 0}^w \sum_{j = 0}^{u} \sum_{\ell = 0}^v \betti[R]{i, j, \ell}{M} \betti[R]{w - i, u - j, v - \ell}{N}\] for all $w, u, v \geq 0$.
\end{cor}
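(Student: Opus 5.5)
Under the hypotheses of Corollary~\ref{cor:tensorRes}, let $\cX$ and $\cY$ be minimal (graded, resp. multigraded) free resolutions of $M$ and $N$ over $R$. Corollary~\ref{cor:tensorRes} says that $\cX \otimes_R \cY$ is a minimal free resolution of $M \otimes_R N$. So the plan is to read off the Betti numbers of $M \otimes_R N$ as the ranks of the free modules in the total tensor complex, and then refine the count by degree.

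First, the homological degree $w$ component of $\cX \otimes_R \cY$ is
\[(\cX \otimes_R \cY)_w = \bigoplus_{i=0}^{w} \cX_i \otimes_R \cY_{w-i}.\]
If $\cX_i \cong R^{\beta_i(M)}$ and $\cY_{w-i} \cong R^{\beta_{w-i}(N)}$, then their tensor product is free of rank $\beta_i(M)\beta_{w-i}(N)$, since $R^a \otimes_R R^b \cong R^{ab}$. Because $\cX \otimes_R \cY$ is minimal, $\beta_w(M\otimes_R N)$ is the rank of this component, and summing over $i$ gives the first formula. Equivalently, one can tensor with the residue field $k$: minimality gives $\Tor[w]{R}{M \otimes_R N}{k} \cong (\cX \otimes_R \cY)_w \otimes_R k$, and one counts dimensions.

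Second, for the graded statement, write $\cX_i = \bigoplus_j R(-j)^{\beta_{i,j}(M)}$ and similarly for $\cY$. The key identity is $R(-j) \otimes_R R(-j') \cong R(-(j+j'))$ as graded modules, since the tensor of the generators has degree $j+j'$. Expanding the double direct sum shows that the summand $R(-u)$ in homological degree $w$ appears with multiplicity $\sum_{i}\sum_{j} \beta_{i,j}(M)\beta_{w-i,u-j}(N)$. Terms with $j>u$ or $i>w$ vanish because Betti numbers in negative degrees are zero, which gives the stated ranges of summation. The graded version of Corollary~\ref{cor:tensorRes} ensures that this graded complex is a minimal graded resolution, so these multiplicities are the graded Betti numbers.

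Third, the multigraded case is identical, using $R(-(j,\ell)) \otimes_R R(-(j',\ell')) \cong R(-(j+j',\ell+\ell'))$. This holds because the multidegree of a tensor product of homogeneous generators is the sum of their multidegrees, and the differentials of $\cX \otimes_R \cY$ (namely $\partial \otimes 1 \pm 1 \otimes \partial$) are multihomogeneous of degree zero. The only point needing care is this last one: that minimality and homogeneity survive taking the tensor product. I would handle it by noting that every entry of each differential of $\cX \otimes_R \cY$ is an entry of a differential of $\cX$ or of $\cY$, so all entries lie in the homogeneous maximal ideal and the total complex is minimal. Everything else is bookkeeping of direct sums.
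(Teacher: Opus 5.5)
Your proposal is correct and follows essentially the same route as the paper: invoke Corollary~\ref{cor:tensorRes} to get that $\cX \otimes_R \cY$ is a minimal free resolution of $M \otimes_R N$, then count ranks in $(\cX \otimes_R \cY)_w = \bigoplus_{i=0}^{w} \cX_i \otimes_R \cY_{w-i}$ using $R^a \otimes_R R^b \cong R^{ab}$. Your shift rule $R(-j) \otimes_R R(-j') \cong R(-(j+j'))$ spells out the graded and multigraded cases that the paper calls analogous; like the paper, you tacitly need all Betti numbers to sit in nonnegative internal degrees to get the stated summation bounds, which holds for the modules used here.
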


\begin{proof}
    Suppose $\cX$, $\cY$, and $\mathcal{Q}$ are the minimal free resolutions of $M$, $N$, and $M \otimes_R N$, respecitvely.  By Corollary~\ref{cor:tensorRes}, we have \[\mathcal{Q}_w \cong \bigoplus_{i = 0}^w \left(\cX_i \otimes_R \cY_{w - i}\right).\] It follows that
    \begin{align*}
        R^{\betti[R]{w}{M \otimes_R N}} &\cong \mathcal{Q}_w \\
         &\cong \bigoplus_{i = 0}^w \left(\cX_i \otimes_R \cY_{w - i}\right) \\
         &\cong \bigoplus_{i = 0}^w \left(R^{\betti[R]{i}{M}} \otimes_R R^{\betti[R]{w - i}{N}} \right) \\
         &\cong \bigoplus_{i = 0}^w R^{\betti[R]{i}{M} \betti[R]{w - i}{N}} \\
         &\cong R^{\sum_{i = 0}^w \betti[R]{i}{M} \betti[R]{w - i}{N}}.
    \end{align*}
    Thus, we have \[\betti[R]{w}{M \otimes_R N} = \sum_{i = 0}^w \betti[R]{i}{M} \betti[R]{w - i}{N}.\] An analogous argument holds for the graded and multigraded Betti numbers of $M \otimes_R N$ over $R$.
\end{proof}

\section{Combinatorial Identities}
\label{sec:combo}

We list the combinatorial identities used in this paper. All of these are commonly used, so we omit their proofs.

\begin{rem}
    Given $a,b \in \Z$, we adopt the usual definition of the binomial coefficient $\binom{a}{b}$. That is, the binomial coefficient is always given by \[\binom{a}{b} = \begin{cases}  \frac{a!}{b!(a-b)!} & 0 \leq b \leq a \\ 0 & \text{otherwise}. \end{cases}\]
\end{rem}

\begin{example}
\label{ex:comb}
All variables represent integers unless otherwise specified. 
    \begin{enumerate}
    \item \label{comb:symm} $\binom{a}{b} = \binom{a}{a-b}$.
    \item \label{comb:Pascal} (Pascal's Rule)  \[\binom{a}{b} + \binom{a}{b+1} = \binom{a+1}{b+1}.\] 
    \item \label{comb:HS} (Hockey Stick Identity) \[\sum_{m = b}^a \binom{m}{b} = \binom{a+1}{b+1}.\]
    \item \label{comb:AltSum} \[\sum_{m=0}^a (-1)^m \binom{a}{m} = \begin{cases} 1 & a = 0 \\ 0 & a \neq 0 \end{cases}.\]
    \item \label{comb:ChuV} (Chu-Vandermonde Identity) \[\sum_{m=0}^c \binom{a}{m} \binom{b}{c-m} = \binom{a+b}{c}.\]
    \item \label{comb:prod} \[\binom{a}{b} \binom{b}{c} = \binom{a}{c} \binom{a-c}{b-c} = \binom{a}{c} \binom{a-c}{a-b}.\]
\end{enumerate}
\end{example}

\bibliographystyle{amsalpha}
\bibliography{neuralbib}
\end{document}